\begin{document}

\newtheorem{theorem}{Theorem}[section]
\newtheorem{result}[theorem]{Result}
\newtheorem{fact}[theorem]{Fact}
\newtheorem{conjecture}[theorem]{Conjecture}
\newtheorem{definition}[theorem]{Definition}
\newtheorem{lemma}[theorem]{Lemma}
\newtheorem{proposition}[theorem]{Proposition}
\newtheorem{remark}[theorem]{Remark}
\newtheorem{corollary}[theorem]{Corollary}
\newtheorem{facts}[theorem]{Facts}
\newtheorem{props}[theorem]{Properties}
\newtheorem*{thmA}{Theorem A}
\newtheorem{ex}[theorem]{Example}

\newcommand{\notes} {\noindent \textbf{Notes.  }}
\newcommand{\note} {\noindent \textbf{Note.  }}
\newcommand{\defn} {\noindent \textbf{Definition.  }}
\newcommand{\defns} {\noindent \textbf{Definitions.  }}
\newcommand{\x}{{\bf x}}
\newcommand{\z}{{\bf z}}
\newcommand{\B}{{\bf b}}
\newcommand{\V}{{\bf v}}
\newcommand{\T}{\mathbb{T}}
\newcommand{\Z}{\mathbb{Z}}
\newcommand{\Hp}{\mathbb{H}}
\newcommand{\D}{\mathbb{D}}
\newcommand{\R}{\mathbb{R}}
\newcommand{\N}{\mathbb{N}}
\renewcommand{\B}{\mathbb{B}}
\newcommand{\C}{\mathbb{C}}
\newcommand{\ft}{\widetilde{f}}
\newcommand{\dt}{{\mathrm{det }\;}}
 \newcommand{\adj}{{\mathrm{adj}\;}}
 \newcommand{\0}{{\bf O}}
 \newcommand{\av}{\arrowvert}
 \newcommand{\zbar}{\overline{z}}
 \newcommand{\xbar}{\overline{X}}
 \newcommand{\htt}{\widetilde{h}}
\newcommand{\ty}{\mathcal{T}}
\renewcommand\Re{\operatorname{Re}}
\renewcommand\Im{\operatorname{Im}}
\newcommand{\tr}{\operatorname{Tr}}

\newcommand{\ds}{\displaystyle}
\numberwithin{equation}{section}

\renewcommand{\theenumi}{(\roman{enumi})}
\renewcommand{\labelenumi}{\theenumi}

\title{On B\"{o}ttcher coordinates and quasiregular maps}

\author[Fletcher]{Alastair Fletcher}
\address{University of Warwick \\ Institute of Mathematics \\  Coventry, CV4 7AL, UK}
\email{Alastair.Fletcher@warwick.ac.uk}

\author[Fryer]{Rob Fryer}
\address{University of Warwick \\ Institute of Mathematics \\ Coventry,
CV4 7AL, UK}
\email{fryer.rob@gmail.com}

\begin{abstract}
It is well-known that a polynomial $f(z)=a_dz^d(1+o(1))$ can be conjugated by a holomorphic map $\phi$ to $w \mapsto w^d$ in a neighbourhood of infinity. This map $\phi$ is called a B\"{o}ttcher coordinate for $f$ near infinity.
In this paper we construct a B\"{o}ttcher type coordinate for compositions of
affine mappings and polynomials, a class of mappings first studied in \cite{FG}. As an application, we prove that if $h$ is affine and $c \in \C$, then $h(z)^2+c$ is not uniformly quasiregular.

MSC 2010: 30C65 (Primary), 30D05, 37F10, 37F45 (Secondary).
\end{abstract}

\thanks{The authors wish to thank Vladimir Markovic for stimulating conversations on the subject matter. The first author was supported by EPSRC grant EP/G050120/1.}

\maketitle

\section{Introduction}

In recent years, there has been a great deal of interest in the dynamics of holomorphic functions in the plane, see for example the books of Milnor \cite{Milnor} or Carleson and Gamelin \cite{CG}. However, interest in the iteration of such functions goes back further. The first burst of activity occurred at the beginning of the 20th century with the work of Fatou, Julia and others. Of particular interest here is the following theorem of B\"{o}ttcher from 1904.

\begin{thmA}[\cite{Bottcher}]
Let $f$ be holomorphic in a neighbourhood $U$ of infinity, and let infinity be a superattracting fixed point of $f$, that is, there exists $d \geq 2$ such that
\[ f(z) = a_dz^d ( 1+o(1)),\]
for $z \in U$, where $a_d \in \C \setminus \{ 0 \}$. Then there exists a holomorphic change of coordinate $w = \phi(z)$, with $\phi(\infty) = \infty$, which conjugates $f$ to $w \mapsto w^d$ in some neighbourhood of infinity. Further, $\phi$ is unique up to multiplication by an $(n-1)$'st root of unity.
\end{thmA}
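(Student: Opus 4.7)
The plan is to prove the classical B\"ottcher theorem by constructing $\phi$ as a limit of carefully chosen roots of the iterates of $f$, following the standard Fatou--Julia approach but adapted to the neighbourhood of infinity.

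First I would reduce to a more tractable normal form. Conjugating by $z\mapsto 1/z$ moves the fixed point from $\infty$ to $0$ and turns $f$ into a map $g$ holomorphic near $0$ with $g(0)=0$ and $g(w)=a_d^{-1}w^d(1+o(1))$. A further rescaling $w\mapsto \lambda w$ with $\lambda^{d-1}=a_d^{-1}$ (choice of $\lambda$ is the only freedom here, and is exactly what will produce the stated non-uniqueness) allows us to assume $g(w)=w^d h(w)$ with $h$ holomorphic near $0$ and $h(0)=1$. It is then enough to produce a holomorphic $\phi$ fixing $0$ with $\phi'(0)=1$ that conjugates $g$ to $w\mapsto w^d$ on some disc; translating back through $1/z$ recovers the B\"ottcher coordinate near $\infty$.

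Next, since $h(0)=1$, a holomorphic branch of $h(w)^{1/d}$ equal to $1$ at $0$ exists on a disc $\{|w|<r\}$. On such a disc $g$ is a strict contraction toward $0$ in the sense that $|g(w)|\le C|w|^d$, so after shrinking $r$ the iterates $g^n(w)$ converge to $0$ uniformly and, more importantly, $|g^n(w)|\le r\cdot 2^{-n}$ (or any similar bound). Define
\[
\phi_n(w)=\bigl(g^n(w)\bigr)^{1/d^n},
\]
where the branch is chosen inductively so that $\phi_{n+1}(w)/\phi_n(w)=h(g^n(w))^{1/d^{n+1}}$, with the branch of the outer root being the one equal to $1$ at the origin. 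This is possible because $g^n(w)$ stays in a small disc where $h$ is close to $1$ and avoids the negative real axis.

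I would then show $(\phi_n)$ converges uniformly on $\{|w|<r\}$: the ratio $\phi_{n+1}/\phi_n=h(g^n(w))^{1/d^{n+1}}$ satisfies $|\phi_{n+1}/\phi_n -1|=O(|g^n(w)|/d^{n+1})$, which is summable, so the infinite product converges to a non-vanishing holomorphic function and $\phi=\lim \phi_n$ is holomorphic with $\phi(0)=0$ and $\phi'(0)=1$. The conjugacy relation is now a telescoping identity,
\[
\phi(g(w))=\lim_{n\to\infty}\bigl(g^{n+1}(w)\bigr)^{1/d^n}=\Bigl(\lim_{n\to\infty}\bigl(g^{n+1}(w)\bigr)^{1/d^{n+1}}\Bigr)^{d}=\phi(w)^d,
\]
after checking that the branches match. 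For uniqueness, if $\psi$ is another such conjugacy then $\psi\circ\phi^{-1}$ commutes with $w\mapsto w^d$ near $0$; writing its power series and comparing coefficients forces it to have the form $w\mapsto\zeta w$ with $\zeta^{d-1}=1$. Reversing the initial change of variable transfers everything back to $\infty$.

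The main obstacle is the bookkeeping of the $d^n$-th root branches: one has to verify that the inductive choice at each step is consistent so that the product telescopes correctly and the limit really conjugates $g$ to $w\mapsto w^d$ rather than to $\zeta w\mapsto (\zeta w)^d$. Everything else is a routine estimate coming from the rapid super-attracting contraction $|g^n(w)|\lesssim r^{d^n}$.
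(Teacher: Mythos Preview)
The paper does not prove Theorem~A: it is quoted in the introduction as a classical result of B\"ottcher (1904), with references to \cite{Bottcher}, \cite{Milnor} and \cite{CG}, and serves only as motivation for the paper's own Theorem~\ref{thm1}. So there is no ``paper's proof'' to compare against.

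That said, your proposal is a correct and standard argument --- it is essentially the proof one finds in Milnor~\cite{Milnor} or Carleson--Gamelin~\cite{CG}. The reduction via $z\mapsto 1/z$ and the rescaling $\lambda^{d-1}=a_d^{-1}$ are right, the inductive choice of $d^n$-th root branches via the telescoping ratio $\phi_{n+1}/\phi_n=h(g^n(w))^{1/d^{n+1}}$ is exactly the way to avoid branching ambiguities, and your uniqueness argument (any self-conjugacy of $w\mapsto w^d$ fixing $0$ with nonzero derivative is multiplication by a $(d-1)$st root of unity) is the standard one. One minor remark: the statement in the paper writes ``$(n-1)$'st root of unity'', which is evidently a typo for ``$(d-1)$'st''; your argument gives the correct exponent.

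It may interest you that the paper's proof of its quasiregular analogue (Theorem~\ref{thm1}) does \emph{not} follow this ``root of iterates'' template directly. Instead it passes to logarithmic coordinates, builds the conjugacy as a limit of $\psi_k=H^{-k}\circ f^k$ (suitably interpreted), and controls both the $C^0$ convergence and the complex dilatation by explicit estimates on the lifted maps. This is forced by the quasiregular setting, where one must track distortion separately and cannot simply appeal to holomorphy of a uniform limit.
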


The map $\phi$ is called a {\it B\"{o}ttcher coordinate} for $f$ near infinity. In this article, we will find an analogous B\"{o}ttcher coordinate for mappings of the form $f=g \circ h$, where $g$ is a polynomial of degree $d \geq 2$, and $h$ is an affine mapping of the plane to itself.

Such mappings are examples of quasiregular mappings. Informally, quasiregular mappings send infinitesimal circles to infinitesimal ellipses. The greater the eccentricity of the ellipses, the larger the distortion of the mapping. Quasiregular mappings can be defined in any dimension, see Rickman's monograph \cite{Rickman} for more details. They can be viewed as generalizations of holomorphic mappings in the plane, and share some similar properties which means their dynamics can be studied.

The iterates of a quasiregular mapping are again quasiregular, and were first studied in the special case where there is a uniform bound on the distortion of the iterates, see \cite{IM}. Such mappings are called uniformly quasiregular mappings. These are the closest relatives to holomorphic mappings, and the behaviour of uniformly quasiregular mappings near their fixed points was studied in \cite{HM,HMM}.
It turns out the dynamics of quasiregular mappings can still be studied in the absence of uniform quasiregularity. Recent papers in this direction include \cite{Bergweiler,BE,BFLM,FG,FN}.

We also mention that B\"{o}ttcher coordinates have been studied in the setting of several complex variables \cite{BEK}.

\section{Statement of results}

The purpose of this article is to continue the study of the dynamics of the quasiregular mappings $h(z)^2+c$ initiated in \cite{FG},
where $h$ is an affine mapping and $c \in \C$. In particular, we prove the following analogue of B\"{o}ttcher coordinates for these mappings.

\begin{theorem}
\label{thm1}
Let $h:\C \to \C$ be an affine mapping and $c \in \C$. Then there exists a neighbourhood $U=U(h,c)$ of infinity and a quasiconformal map
$\psi = \psi(h,c)$ such that
\begin{equation}
\label{thm1eq1}
h(\psi(z))^2 = \psi(f(z)),
\end{equation}
for $z \in U$, where $f(z) = h(z)^2+c$. Further, $\psi$ is asymptotically conformal as $\av z \av \to \infty$.
\end{theorem}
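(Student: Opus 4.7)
The plan is to adapt the classical B\"ottcher construction to the quasiregular setting. Setting $g(z) = h(z)^2$, the desired equation becomes $g \circ \psi = \psi \circ f$ with $f = g + c$. Writing $h(z) = \alpha z + \beta \bar z + \gamma$ and assuming $h$ is orientation-preserving (so $|\alpha| > |\beta|$; the other case is analogous), both $f$ and $g$ are quasiregular with the same constant Beltrami coefficient $k = \beta/\alpha$, and $|g(z)| \asymp |z|^2$ for large $|z|$. First I would choose $R$ large enough that $V := \{|z| > R\}$ is forward-invariant under $f$, with iterates escaping to infinity uniformly on compacta of $V$.

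Since $g = s \circ h$ where $s(w) = w^2$, there is a quasiconformal branch $G = h^{-1} \circ \sqrt{\,\cdot\,}$ of $g^{-1}$ on a slit neighborhood of infinity. Set $\psi_0 := \mathrm{id}$ and define $\psi_{n+1}(z) := G(\psi_n(f(z)))$, which is equivalent to $g \circ \psi_{n+1} = \psi_n \circ f$ on $V$. Because $G$ contracts like $|w|^{-1/2}$ at infinity,
\[
|\psi_1(z) - z| = |G(g(z)+c) - G(g(z))| \lesssim |c|/|g(z)|^{1/2} \lesssim |c|/|z|,
\]
and an induction on $n$ combined with $|f^n(z)| \asymp |z|^{2^n}$ shows that $|\psi_{n+1}(z) - \psi_n(z)|$ decreases super-geometrically in $n$ on compacta of $V$. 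Thus $\psi_n \to \psi$ uniformly on compacta, and passing to the limit in the recursion yields the conjugacy equation of the theorem.

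For quasiconformality and asymptotic conformality, I would compute the Beltrami coefficient of each $\psi_n$ via the chain rule. Using $\mu_f = \mu_g = k$ (constant) and $\mu_G(w) = -(\beta/\bar\alpha)\, w/|w|$, a direct calculation gives
\[
\mu_{\psi_1}(z) = k \cdot \frac{1 - \omega(z)}{1 - |k|^2 \omega(z)}, \qquad \omega(z) = \frac{f(z)}{|f(z)|} \cdot \frac{\overline{h(z)}}{h(z)}.
\]
Since $f(z) \sim h(z)^2$ at infinity, $\omega(z) \to 1$ and hence $\mu_{\psi_1}(z) \to 0$; moreover $|\mu_{\psi_1}(z)| \leq 2|k|/(1+|k|^2) < 1$ uniformly on $V$. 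The hard part is to extend such bounds to the whole sequence: to show inductively that $\|\mu_{\psi_n}\|_\infty \leq k_0 < 1$ uniformly in $n$ on $V$, and that $\mu_{\psi_n}(z) \to 0$ at infinity uniformly in $n$. The key point is that the common constant Beltrami coefficient of $f$ and $g$ produces cancellations that prevent the distortion of $\psi_n = g^{-n} \circ f^n$ from growing like the naive bound $K_h^{2n}$ under $n$-fold composition; carefully tracking how the unimodular phase factors evolve is the technical heart of the argument. Once established, these uniform bounds pass to the limit and yield $\psi$ quasiconformal on $V$ with $\mu_\psi(z) \to 0$ as $|z| \to \infty$.
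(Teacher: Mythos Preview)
Your iteration scheme is exactly the one the paper uses: set $\psi_0=\mathrm{id}$ and define $\psi_{n+1}$ so that $g\circ\psi_{n+1}=\psi_n\circ f$, then show convergence and uniform dilatation control. Your $\mu_{\psi_1}$ computation is correct, and the convergence sketch is essentially right. The substantive difference is that you work directly in the $z$-plane, while the paper passes to logarithmic coordinates $X=\log z$, writing $\tilde h(X)=X+\varphi(X)$, $\widetilde{h^{-1}}(X)=X+\xi(X)$, and $\tilde f(X)=2X+2\varphi(X)+\rho(X+\varphi(X))$ with $\rho=O(e^{-2\Re X})$. In these coordinates the cancellation you allude to becomes the algebraic identity $\varphi(X)+\xi(X+\varphi(X))=0$ (and its differentiated versions), which lets one show by a clean induction that $|(F_k)_{\bar X}|\le C e^{-\alpha\Re X}$ and $|(F_k)_X-1|\le C e^{-\alpha\Re X}$ uniformly in $k$ for any fixed $\alpha\in(1,2)$; this immediately yields both the asymptotic conformality and a uniform $K$ on a fixed half-plane.

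The genuine gap in your proposal is precisely the step you flag as ``the hard part'': you have not given an inductive mechanism for bounding $\mu_{\psi_n}$ uniformly in $n$. Your bound $|\mu_{\psi_1}|\le 2|k|/(1+|k|^2)$ does not obviously propagate, because for $n\ge 2$ the middle factor $\psi_{n-1}$ in $\psi_n=G\circ\psi_{n-1}\circ f$ has a nonconstant Beltrami coefficient, and the phase factors $r_{\psi_{n-1}}$ you would need to track are no longer explicit. The paper's authors remark that they found a direct argument difficult and introduced the logarithmic transform for exactly this reason: there the ``phase tracking'' reduces to Lipschitz estimates on the bounded periodic functions $\varphi,\xi$ and their partial derivatives, together with the identity above, and the induction closes with constants independent of $k$. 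If you want to stay in the $z$-plane, you would need to supply an equivalent quantitative statement---e.g.\ an induction showing $|\mu_{\psi_n}(z)|\le C|z|^{-\alpha}$ uniformly in $n$---and explain why the extra $\psi_{n-1}$ layer, evaluated at $f(z)$, contributes only $O(|z|^{-2\alpha})$ after the same cancellation.
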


\begin{remark}
Recall that $\psi$ is asymptotically conformal as $\av z \av \to \infty$ if for all $\epsilon >0$, there exists a neighbourhood $V$ of infinity
such that the complex dilatation of $\psi$ satisfies $\av \av \mu_{\psi}(z) \av \av < \epsilon$ for $z \in V$.
\end{remark}

\begin{remark}
Theorem \ref{thm1} also holds for $p(h(z))$, where $p$ is any polynomial of degree $d \geq 2$ and $h$ is affine. For simplicity, we restrict to the
case $p$ is a quadratic and recall from \cite{FG} that any composition of a quadratic and an affine mapping is linearly conjugate to a composition of a
quadratic of the form $z^2+c$ and an affine mapping.
\end{remark}

Recall the escaping set $I(f) = \{z \in \C: f^n(z) \to \infty \}$.
The quasiconformal map $\psi$ constructed in Theorem \ref{thm1} is initially defined in a neighbourhood of infinity, but we may extend its domain
of definition. We write $H(z)=h(z)^2$.

\begin{theorem}
\label{thm2}
\begin{enumerate}
\item If $0 \notin I(H+c)$, then $\psi$ can be continued injectively to a locally quasiconformal map $I(H+c) \to I(H)$.
\item If $0 \in I(H+c)$, then $\psi$ cannot be extended to the whole of $I(H+c)$, but may be extended injectively to a domain containing $c$.
\end{enumerate}
\end{theorem}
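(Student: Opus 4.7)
The plan is a pullback extension using the functional equation $H\circ\psi=\psi\circ f$. Set $U_0=U$ (the domain from Theorem \ref{thm1}) and $U_n=f^{-n}(U_0)$; then the $U_n$ form an increasing family with $\bigcup_n U_n=I(H+c)$. Inductively, for $z\in U_n\setminus U_{n-1}$ we have $f(z)\in U_{n-1}$, and we set $\psi(z)$ to be the $H$-preimage of $\psi(f(z))$ selected by continuity with the previously defined $\psi$---equivalently, the lift of $\psi\circ f$ through the degree-two branched cover $H$. The lift exists as long as $\psi(U_{n-1})$ avoids the critical value $0$ of $H$. Local quasiconformality is then automatic, since on each layer $\psi$ factors locally as a composition of the initial quasiconformal $\psi$ with branches of the polynomials $f^n$ and $H^{-n}$.

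The obstruction is the critical value $0$ of $H$. Both $f$ and $H$ share the same unique critical point $h^{-1}(0)$, with $f$-image $c$ and $H$-image $0$; the 2-to-1 local branchings match, so when the pullback first reaches $h^{-1}(0)$, $\psi$ may be continued quasiconformally across it (sending $h^{-1}(0)$ to itself). Whether the pullback can continue past this crossing is controlled by whether iterated $f$-preimages of $h^{-1}(0)$ appear in $U_n$, which is governed by the hypothesis on $0\in I(H+c)$.

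For part (i), $0\notin I(H+c)$ prevents the problematic preimages from entering the extension, so the pullback produces an injective locally quasiconformal map $\psi:I(H+c)\to I(H)$; injectivity follows from the standard uniqueness of lifts. For part (ii), $0\in I(H+c)$ forces a first obstructive step, after the extension has already reached a domain containing $c$ via the crossing through $h^{-1}(0)$ described above. A monodromy argument, or equivalently the incompatibility in this regime of the Cantor-like topology of $I(H+c)$ with the topology of $I(H)$, rules out any injective single-valued extension to all of $I(H+c)$.

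The main obstacle is the rigorous quasiconformal extension of $\psi$ across the critical point $h^{-1}(0)$, where the matching 2-to-1 branchings of $f$ and $H$ must be reconciled. This is the single local step that distinguishes parts (i) and (ii); once established, the inductive pullback elsewhere follows the standard pattern of classical B\"{o}ttcher theory.
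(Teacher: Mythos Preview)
Your pullback-by-lifting strategy is exactly the paper's approach: one proves an extension lemma (Lemma~\ref{s5l1}) saying that if $\psi$ is defined on a symmetric neighbourhood $V=-V$ of infinity with $f:f^{-1}(V)\to V$ an unbranched two-to-one covering, then $\psi$ extends injectively to $f^{-1}(V)$ still satisfying $H\circ\psi=\psi\circ f$; then one iterates. So the plan is correct, but two points in your write-up need adjustment.

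\emph{Injectivity.} ``Standard uniqueness of lifts'' gives a well-defined lift, not an injective one; a map of degree-two covers can collapse fibres. The paper's argument is concrete and uses the symmetry $V=-V$: since $h(-z)=-h(z)$, the nontrivial deck transformation of $f$ is $z\mapsto -z$. If $\psi(z_1)=\psi(z_2)$ in the extended domain, then $\psi(f(z_1))=H(\psi(z_1))=H(\psi(z_2))=\psi(f(z_2))$, so $f(z_1)=f(z_2)$ by the inductive injectivity on $V$, whence $z_2=-z_1$. A continuity argument along symmetric curves $\pm\gamma$ from $\pm z_1$ to $\infty$ then forces $\psi(-z)=-\psi(z)$, contradicting $\psi(z_1)=\psi(-z_1)$. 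You should supply this (or an equivalent deck-group) argument rather than invoke uniqueness alone.

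\emph{The critical point.} You have the order reversed, and this leads you to misidentify ``the main obstacle''. Here $h^{-1}(0)=0$ and $f(0)=c$, so the critical value $c$ enters the pullbacks $f^{-n}(U)$ one step \emph{before} the critical point $0$ does. In case~(ii), once $c\in f^{-n}(U)$ but $c\notin f^{-(n-1)}(U)$, the map $f:f^{-n}(U)\to f^{-(n-1)}(U)$ is still an unbranched covering (since $0\notin f^{-n}(U)$), so the extension lemma applies and already yields a domain containing $c$; this is all the theorem asserts. No ``crossing through $h^{-1}(0)$'' is needed, and the matching-branching step you highlight simply does not occur in the proof. (Your instinct is correct that one \emph{could} push one step further across $0$ by matching the $2$-to-$1$ local models of $f$ and $H$ there; the genuine obstruction to injectivity appears at the following step, where $f^{-1}(0)$ has two points but $H^{-1}(0)=\{0\}$ has only one.)
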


\begin{remark}
In case (i) of Theorem \ref{thm2}, we can only assert local quasiconformality.
The map $\psi$ is extended by pulling back under \eqref{thm1eq1}, and each time we pull back the distortion will increase.
Therefore the distortion will be unbounded as one approaches $\partial I(H+c)$.
\end{remark}

A uniformly quasiregular mapping is one for which there is a common bound on the distortion of all the iterates.
We can use Theorem \ref{thm1} to prove the following result on the mapping $h(z)^2+c$.

\begin{theorem}
\label{thm3}
Let $h$ be affine and $c \in \C$. Then the mapping $f(z) = h(z)^2+c$ is not uniformly quasiregular.
\end{theorem}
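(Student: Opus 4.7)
The plan is to argue by contradiction, using Theorem \ref{thm1} to transfer the uniform bound on $f$ to a corresponding bound on $H := h^2$ near infinity, and then to show that $H^n$ actually has distortion growing without bound on every neighbourhood of infinity, contradicting the transferred bound. Suppose $f$ is uniformly quasiregular with $K(f^n) \leq K$ for all $n$. Since $\psi \circ f = H \circ \psi$ and $\psi$ is asymptotically conformal at infinity (Theorem \ref{thm1}), for every $\epsilon > 0$ there is a neighbourhood $V$ of infinity on which both $\psi$ and $\psi^{-1}$ have distortion at most $1+\epsilon$; shrinking $V$ if necessary, we may additionally ensure that forward $f$- and $H$-orbits of points in $V$ stay in $V$, since $\infty$ is super-attracting for both maps. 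Then $H^n = \psi \circ f^n \circ \psi^{-1}$ on $V$, and the submultiplicativity of distortion under composition gives $K(H^n,w) \leq (1+\epsilon)^2 K$ for all $w \in V$ and $n \geq 1$.

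The remainder of the argument contradicts this by direct Beltrami analysis of $H$. We may assume $h$ is not conformal (otherwise $f$ is a holomorphic quadratic polynomial, trivially uniformly quasiregular). Because squaring is holomorphic, $\mu_H \equiv \mu_h = \beta/\alpha$ is constant of modulus $k := |\beta/\alpha| \in (0,1)$, and the chain rule for Beltrami coefficients gives the recursion
\[ \mu_{H^{n+1}}(z) \;=\; S_{n,z}\bigl(\mu_{H^n}(z)\bigr), \qquad S_{n,z}(w) := \frac{w + \mu_h \rho_n(z)}{1 + \overline{w}\,\mu_h \rho_n(z)}, \]
where $\rho_n(z) := \overline{(H^n)_z}/(H^n)_z$ is unimodular. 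Each $S_{n,z}$ is a hyperbolic M\"obius automorphism of $\D$ with translation length $\tanh^{-1}k > 0$, so that $\mu_{H^n}(z)$ is obtained from $\mu_{H^0} = 0$ by composing $n$ hyperbolic translations of a single length whose axes depend on the orbit through $\rho_n(z)$.

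To produce a point at which these translations align---and hence at which $|\mu_{H^n}|\to 1$---I first apply a preliminary rotational conjugation to reduce to the case $\mu_h \in (0,1)$ real; this conjugation preserves both the form of $f$ and uniform quasiregularity. An intermediate value argument on $[0,2\pi)$ (the map $\theta \mapsto 2 \arg h(e^{i\theta}) - \theta$ has winding number one) then produces an angle $\theta^*$ with $\arg h(R e^{i\theta^*}) \equiv \theta^*/2 \pmod{\pi}$, so that $\R_+ e^{i\theta^*}$ is $H$-invariant. On this ray the derivative factorises as $DH^n = C_n \cdot (R_{\theta^*/2} A)^n$ for a positive scalar $C_n$ and the $\R$-linear part $A$ of $h$, and a direct computation shows that both $(H^n)_z$ and $(H^n)_{\bar z}$ are real there, so $\rho_n \equiv 1$ and the $S_{n,z}$ share a common axis. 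Iterating gives $\tanh^{-1}|\mu_{H^n}(z)| = n \tanh^{-1} k \to \infty$; taking $R$ large enough that $R e^{i\theta^*} \in V$ then contradicts the bound from the first step.

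The main obstacle is this last verification: one must check the rigidity of the orbit along the invariant ray, namely that the $\rho_n$ really are constant, which in turn requires that $R_{\theta^*/2} A$ have distinct real eigenvalues in the normalised coordinates. For configurations of $(\alpha, \beta)$ where this fails---for instance when both candidate invariant rays yield matrices with complex conjugate eigenvalues---one instead uses orbits close to, but not on, an invariant ray and argues via an ergodic or random-walk style estimate that the composition of $n$ hyperbolic translations with varying axes has displacement growing linearly in $n$ off a small exceptional set, still forcing $|\mu_{H^n}| \to 1$ somewhere in $V$.
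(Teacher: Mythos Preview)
Your overall architecture matches the paper's: transfer a putative uniform bound from $f$ to $H=h^2$ via the conjugacy $\psi$ of Theorem~\ref{thm1}, then show the Beltrami coefficients of $H^n$ escape to $\partial\D$ along a fixed ray of $H$. The first paragraph is essentially the paper's reduction (the paper uses the cruder bound $K(H^n)\le K(\psi)^2K(f^n)$ rather than asymptotic conformality, but either works).

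The gap is in the second half. First, your map $S_{n,z}(w)=\dfrac{w+\mu_h\rho_n}{1+\bar w\,\mu_h\rho_n}$ is \emph{not} a M\"obius automorphism of $\D$: the $\bar w$ in the denominator makes it neither holomorphic nor antiholomorphic, and a direct check shows it is not a hyperbolic isometry. The paper avoids this by decomposing the other way, $H^n=H^{n-1}\circ H$, so that the conjugated quantity in the composition formula is the \emph{constant} $\overline{\mu_1}$; on the fixed ray $R_\phi$ one then gets iteration of a single genuine M\"obius map $A(w)=\dfrac{\mu_1+e^{-i\phi}w}{1+e^{-i\phi}\overline{\mu_1}\,w}$.

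Second, and more seriously, your rotational normalisation does not do what you claim. Conjugating $H$ by a rotation gives $\tilde H=\hat h^2$ with $\mu_{\hat h}$ real, but $\hat h=e^{i\theta/2}h_{K,0}$, not $h_{K,0}$; consequently $\tilde H(x)=e^{i\theta}K^2x^2$ and the real axis is \emph{not} invariant. On the actual fixed ray the derivatives $(\tilde H^n)_z$, $(\tilde H^n)_{\bar z}$ are not real in general, so $\rho_n\not\equiv 1$. Your own eigenvalue condition on $R_{\theta^*/2}A$ is in fact equivalent to the paper's $\tr(A)^2\ge 4$ (both reduce to $(K+1)^2(1+\cos\phi)\ge 8K$), and this is exactly the nontrivial step you leave open: the paper proves it via the sharp estimate $\cos\phi\ge\dfrac{-K^2+6K-1}{(K+1)^2}$, obtained by maximising the angular displacement of rays under $h$. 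Without this, $A$ could be elliptic and $|\mu_n|$ would remain bounded on the fixed ray. The proposed ergodic/random-walk fallback is not a proof.
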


The significance of Theorem \ref{thm3} is as follows. By a result of Hinkkanen \cite{Hinkkanen}, every uniformly quasiregular
mapping in the plane is a quasiconformal conjugate of an analytic mapping. This is a generalization of results of Sullivan \cite{Sullivan} and Tukia \cite{Tukia} for uniformly quasiconformal mappings. The upshot of this is that the study of uniformly quasiregular mappings
in the plane reduces to the standard theory of complex dynamics. Therefore, for the study of the dynamics of mappings of the form
$h(z)^2+c$ to be of independent interest, we need to know that they are not uniformly quasiregular.

In view of Theorem \ref{thm1}, the proof of Theorem \ref{thm3} reduces to the following result.

\begin{theorem}
\label{thm3a}
Let $h$ be an affine mapping. Then $h^2$ is not uniformly quasiregular.
\end{theorem}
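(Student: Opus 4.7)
The plan is to suppose that $H := h^2$ is uniformly $K$-quasiregular and derive a contradiction by showing that the linear dilatation of some iterate $H^n$ exceeds $K$. Since a quadratic polynomial (the case $h$ holomorphic) is trivially uniformly quasiregular, I assume throughout that $h$ is affine but not holomorphic, so that $\mu_h = b/a \neq 0$.

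First I would normalize using conformal conjugations: since the conjugation $\phi^{-1} H \phi$ with $\phi(z) = \alpha z$ preserves both the form $h^2$ and uniform quasiregularity, we can reduce to $h(z) = z + \lambda\bar z + c$ with $\lambda \in (0,1)$, and---for the crux of the argument---to the further subcase $c \in \R$. In this ``standard form'' the real line is eventually $H$-invariant: $h(\R)\subset\R$, so $H(\R)\subset\R_{\geq 0}$, and for $x$ sufficiently large the forward orbit $\{H^k(x)\}_{k\geq 0}$ lies entirely in $\R_{>0}$.

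Now the dilatation computation is clean. Writing $H = s\circ h$ with $s(w)=w^2$ holomorphic, the Jacobian of $H$ as an $\R$-linear map on $\C\cong\R^2$ is $J_H(z) = 2h(z)\cdot J_h$, where multiplication by the complex scalar $2h(z)$ is understood as a similarity. In the basis $\{1,i\}$, $J_h = \operatorname{diag}(1+\lambda,\,1-\lambda)$. At a real $x > 0$ with $h(x) > 0$ the scalar $2h(x)$ is \emph{real} positive, so it commutes with $J_h$, and the chain rule gives
\[
J_{H^n}(x) \;=\; \prod_{k=0}^{n-1} J_H(H^k(x)) \;=\; \left(\prod_{k=0}^{n-1} 2h(H^k(x))\right) \cdot \begin{pmatrix} (1+\lambda)^n & 0 \\ 0 & (1-\lambda)^n \end{pmatrix}.
\]
The ratio of eigenvalues is therefore exactly $K(h)^n = ((1+\lambda)/(1-\lambda))^n$, and since $K(h) > 1$ this exceeds $K$ for $n$ large, contradicting the uniform bound.

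The main obstacle is the normalization: the quantities $\arg b + 3\arg a$ and $\arg c + \arg a$ are \emph{invariants} of conformal conjugation of $H$, so one cannot in general reduce an arbitrary affine $h$ to the standard form with $c\in\R$ by conformal conjugation alone. To close this gap I would either (i) use additional anti-conformal conjugations (which also preserve uniform quasiregularity and can flip the signs of these invariants, allowing enough flexibility together with a case analysis), or (ii) argue perturbatively: far from the origin the dynamics of $H$ is dominated by the linear part $h_0(z)=az+b\bar z$, and by choosing starting points $x_n$ along the (suitably rotated) major axis of $h_0$ with $|x_n|$ growing fast enough relative to $n$, the orbits under $H$ stay close to the $h_0^2$-orbits for $n$ steps, so the Jacobian product is well-approximated by its standard-form analogue and still yields dilatation $\gtrsim K(h)^n$.
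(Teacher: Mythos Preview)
Your computation in the ``standard form'' $h(z)=z+\lambda\bar z$ with $\lambda\in(0,1)$ is correct and clean: the real axis is $H$-invariant, the Jacobians along a real orbit are simultaneously diagonal, and the dilatation of $H^n$ at such a point is exactly $K(h)^n$. This is precisely the case $\theta=0$ of the paper's $h_{K,\theta}$.

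The gap is that your normalization cannot be achieved for general $\theta$, and your proposed fixes misdiagnose the obstruction. In the paper $h=h_{K,\theta}$ is already \emph{linear} (no translation), so there is no constant ``$c$'' in $h$ to eliminate and nothing for fix (ii) to perturb away; the issue is the angle $\theta$, not a shift. Conjugating $H=h_{K,\theta}^2$ by $z\mapsto\gamma z$ produces $\gamma\,(h_{K,\theta'}(z))^2$, and the prefactor $\gamma$ cannot be absorbed while simultaneously making both coefficients of the new $h$ real. What actually happens on the fixed ray $R_\phi$ of $H$ is that $h(z)\in R_{\phi/2}$, so
\[
J_{H^n}(z)\;=\;(\text{positive scalar})\cdot\bigl(R_{\phi/2}\,J_h\bigr)^n,
\]
where $R_{\phi/2}$ is rotation by $\phi/2$. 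For $\phi\neq 0$ the factors $R_{\phi/2}$ and $J_h$ do not commute, so the singular-value ratio of the $n$th power is \emph{not} $K(h)^n$, and whether it tends to infinity is exactly the question of whether $R_{\phi/2}J_h$ (normalized to $SL(2,\R)$) is non-elliptic.

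This is precisely what the paper proves. One computes $\tr^2=\tfrac{(K+1)^2(1+\cos\phi)}{2K}$, and the iterates blow up iff this is $\geq 4$. The paper secures this via the nontrivial estimate $\cos\phi\geq\tfrac{-K^2+6K-1}{(K+1)^2}$ on the fixed-ray angle (obtained by bounding how far $h$ can rotate a ray), after which the trace inequality becomes an identity. Your argument recovers the trivial endpoint $\phi=0$; the substance of the theorem is this bound on $\phi$, which your proposal does not supply.
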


This theorem will be proved by showing that the complex dilatation of the iterates of $h^2$ on a ray fixed by $h^2$ has a particularly nice form.
Using this, and some basic iteration theory of M\"{o}bius transformations, we show that the modulus of the complex dilatation converges to $1$ on this fixed ray, which is equivalent to the maximal dilatation being unbounded.
Assuming this result for the moment, the proof of Theorem \ref{thm3} runs as follows.

\begin{proof}[Proof of Theorem \ref{thm3}]
Write $H(z)=h(z)^2$ and $f(z) = h(z)^2+c$. By Theorem \ref{thm3a}, $H$ is not uniformly quasiregular in any neighbourhood of infinity.
By Theorem \ref{thm1}, $H = \psi \circ f \circ \psi^{-1}$ in a neighbourhood of infinity $U$. Therefore
\[ K(H^n) = K(\psi \circ f^n \circ \psi^{-1}) \leq K(\psi)^2 K(f^n),\]
where $K(g)$ denotes the maximal dilatation of $g$.
Since $K(H^n) \to \infty$ in $U$, we have $K(f^n) \to \infty$ in $U$.
\end{proof}

The paper is organized as follows. In \S 3, we recall some preliminary material on quasiregular mappings and results from \cite{FG}. In \S 4, we outline the proof of Theorem \ref{thm1}, saving the details for \S 5. In \S 6 we prove Theorem \ref{thm2} and in \S 7 we prove Theorem \ref{thm3a}.

\section{Preliminaries}

\subsection{Quasiregular mappings}

We first collect some definitions and results that we will use. The standard reference for quasiregular mappings is Rickman's monograph \cite{Rickman}.

A quasiregular mapping $f:G \rightarrow \R^{n}$ from a domain $G \subseteq \R^{n}$ is called quasiregular if $f$ belongs to the Sobolev space $W^{1}_{n, loc}(G)$ and there exists $K \in [1, \infty)$ such that
\begin{equation}
\label{eq2.1}
\av f'(x) \av ^{n} \leq K J_{f}(x)
\end{equation}
almost everywhere in $G$. Here $J_{f}(x)$ denotes the Jacobian determinant of $f$ at $x \in G$. The smallest constant $K \geq 1$ for which (\ref{eq2.1}) holds is called the outer distortion $K_{O}(f)$. If $f$ is quasiregular, then we also have
\begin{equation}
\label{eq2.2}
J_{f}(x) \leq K' \inf _{\av h \av =1} \av f'(x) h \av ^{n}
\end{equation}
almost everywhere in $G$ for some $K' \in[1, \infty)$. The smallest constant $K' \geq 1$ for which (\ref{eq2.2}) holds is called the inner distortion $K_{I}(f)$. The maximal distortion $K=K(f)$ of $f$ is the larger of $K_{O}(f)$ and $K_{I}(f)$, and we then say that $f$ is $K$-quasiregular. In dimension $2$, we have $K_{O}(f) = K_{I}(f)$. An injective quasiregular mapping is called quasiconformal.

The degree of a mapping is the maximal number of pre-images and is in direct analogue with the degree of a polynomial. A quasiregular mapping is said to be of polynomial type if its degree is uniformly bounded at every point, or equivalently, if $\av f(x) \av \rightarrow \infty$ as $\av x \av \rightarrow \infty$.

Denote by $B(f)$ the branch set of $f$, that is, the set where $f$ is not locally injective.
A quasiconformal mapping is an injective quasiregular mapping. The following result says that in dimension $2$, a quasiregular mapping can be factorized into two mappings, one of which deals with the distortion and one which deals with the branch points.

\begin{theorem}[Stoilow factorization, see for example \cite{IM} p.254]
\label{Stoilow}
Let $f:\C \rightarrow \C$ be a quasiregular mapping. Then there exists an analytic function $g$ and a quasiconformal mapping $h$ such that $f = g \circ h$.
\end{theorem}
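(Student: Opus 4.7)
The plan is to invoke the measurable Riemann mapping theorem of Morrey--Ahlfors--Bers. Since $f$ is $K$-quasiregular on $\C$, its complex dilatation
\[ \mu_f(z) = \frac{\partial_{\bar z} f(z)}{\partial_z f(z)} \]
is defined almost everywhere and satisfies $\|\mu_f\|_\infty \leq (K-1)/(K+1) < 1$. The denominator vanishes only on the branch set $B(f)$, which for a non-constant planar quasiregular map is discrete (by the Reshetnyak--Martio--Rickman--V\"ais\"al\"a theory), hence of measure zero; on this null set I extend $\mu_f$ arbitrarily. Applying the measurable Riemann mapping theorem to $\mu_f$ produces a quasiconformal homeomorphism $h:\C\to\C$ whose Beltrami coefficient agrees with $\mu_f$ almost everywhere.

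Next I would set $g := f\circ h^{-1}$ and show it is holomorphic. Applying the chain rule for the Wirtinger operators to the identity $f = g\circ h$ gives
\[ \partial_z f = (\partial_w g)\circ h \cdot \partial_z h + (\partial_{\bar w} g)\circ h \cdot \overline{\partial_{\bar z} h},\quad \partial_{\bar z} f = (\partial_w g)\circ h \cdot \partial_{\bar z} h + (\partial_{\bar w} g)\circ h \cdot \overline{\partial_z h}. \]
Eliminating $(\partial_w g)\circ h$ using $\mu_h = \mu_f$ a.e.\ (i.e.\ $\partial_{\bar z} f \cdot \partial_z h = \partial_z f \cdot \partial_{\bar z} h$) collapses the system to
\[ (\partial_{\bar w} g)(h(z)) \cdot \bigl(|\partial_z h(z)|^2 - |\partial_{\bar z} h(z)|^2\bigr) \;=\; (\partial_{\bar w} g)(h(z)) \cdot J_h(z) \;=\; 0 \]
almost everywhere. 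Since $h$ is a sense-preserving quasiconformal homeomorphism, $J_h>0$ a.e., hence $\partial_{\bar w} g = 0$ a.e. As $g \in W^{1,2}_{\mathrm{loc}}(\C)$ (the composition of a quasiregular map with a quasiconformal homeomorphism remains in the same local Sobolev class), Weyl's lemma upgrades this to holomorphy of $g$, and $f = g\circ h$ is the desired factorization.

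The main obstacle is the behaviour at the branch set $B(f)$, where $\mu_f$ is not a priori defined and the chain rule identities above can degenerate. The resolution is the classical discreteness of $B(f)$ in dimension two, so $B(f)$ has planar measure zero and all the manipulations above, being a.e.\ statements, are unaffected. A secondary technical point is justifying that $g\in W^{1,2}_{\mathrm{loc}}$ and that the chain rule may be applied in the Sobolev sense; both are standard consequences of quasiconformal change-of-variable theory and would be quoted rather than reproved.
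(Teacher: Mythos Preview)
The paper does not prove this theorem; it is stated with a reference to \cite{IM} and used as a black box. Your sketch via the measurable Riemann mapping theorem is the standard argument and is essentially correct.

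One small correction: the claim that $\partial_z f$ vanishes only on the branch set is not accurate (and, in the logic of the paper, would be circular, since discreteness of $B(f)$ is deduced \emph{from} Stoilow factorization in the corollary immediately following). What you actually need, and what holds, is that $\partial_z f \neq 0$ almost everywhere: this follows directly from $J_f = |\partial_z f|^2 - |\partial_{\bar z} f|^2 > 0$ a.e., which is part of the definition of quasiregularity. With that in hand $\mu_f$ is well defined a.e.\ with $\|\mu_f\|_\infty < 1$, and the rest of your argument (solve the Beltrami equation for $h$, set $g = f\circ h^{-1}$, check $\partial_{\bar w} g = 0$ a.e.\ via the chain rule, then apply Weyl's lemma) goes through.
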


Stoilow factorization tells us what the branch set of a quasiregular mapping in $\C$ can be.

\begin{corollary}
Let $f:\C \rightarrow \C$ be quasiregular. Then $B(f)$ is a discrete set of points. In particular, if $f$ is quasiregular of polynomial type,
then $B(f)$ is a finite set of points.
\end{corollary}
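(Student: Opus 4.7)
The plan is to use Stoilow factorization (Theorem \ref{Stoilow}) to reduce the statement to the corresponding facts about analytic maps, where the branch set is just the zero set of the derivative. Concretely, write $f = g \circ h$ with $h : \C \to \C$ quasiconformal and $g$ analytic on $h(\C)$ (using the measurable Riemann mapping theorem, we may even take $h(\C)=\C$). Since $h$ is a homeomorphism of the plane, it is locally injective everywhere, so $B(h) = \emptyset$ and $f$ fails to be locally injective at $x$ if and only if $g$ fails to be locally injective at $h(x)$; thus $B(f) = h^{-1}(B(g))$.

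For the first assertion, I would invoke the elementary fact from complex analysis that for a non-constant analytic $g$, the branch set $B(g)$ coincides with the zero set of $g'$, which is a discrete subset of the domain by the identity principle. Because $h^{-1}$ is a homeomorphism, it sends discrete sets to discrete sets, giving that $B(f)$ is discrete.

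For the polynomial-type case, the task is to promote ``discrete'' to ``finite''. The key observation is that any quasiconformal map $h : \C \to \C$ extends to a quasiconformal self-map of $\widehat{\C}$ fixing $\infty$ (pointwise removability of $\infty$ for quasiconformal maps), so $\av h(x) \av \to \infty$ as $\av x \av \to \infty$. Since $f$ is of polynomial type, $\av f(x)\av = \av g(h(x))\av \to \infty$ as $\av x \av \to \infty$, and therefore $\av g(w)\av \to \infty$ as $\av w \av \to \infty$. A non-constant entire function with this property is a polynomial of some degree $d \geq 1$, and then $g'$ is a polynomial of degree $d-1$ with at most $d-1$ zeros. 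Hence $B(g)$ is finite, and so is $B(f) = h^{-1}(B(g))$.

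The only nontrivial point is the last paragraph, specifically the extension of $h$ across $\infty$ so that one really knows $g$ is a polynomial rather than just a transcendental entire function; everything else is formal manipulation from Stoilow's theorem and the identity principle.
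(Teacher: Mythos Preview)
Your argument is correct and follows exactly the route the paper intends: the corollary is stated immediately after Stoilow factorization with the remark that ``Stoilow factorization tells us what the branch set of a quasiregular mapping in $\C$ can be,'' and no further proof is given. You have simply supplied the details the paper leaves implicit, including the observation that $g$ must be a polynomial in the polynomial-type case.
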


In dimension $2$, the complex dilatation of a quasiconformal map $f$ is
\[ \mu_f = f_{\zbar}/f_z.\]
This is related to the distortion via
\[ K(f) = \frac{1+\av \av \mu_f \av \av _{\infty}}{1-\av \av \mu_f \av \av _{\infty} }.\]

\subsection{Dynamics of $h(z)^2+c$}

The type of quasiregular mappings of interest in this paper were first studied in \cite{FG}. We summarize the results of that paper in
this subsection.

The mappings to be iterated are quasiregular mappings of polynomial type in $\C$, of degree $2$, and with constant complex dilatation.
By Stoilow factorization, it follows that such
a mapping $f$ can be decomposed into an analytic mapping $g$
and a quasiconformal mapping $h$ such that $f=g \circ h$.
Since $f$ has degree $2$, $g$ must be a quadratic polynomial.

Consider an affine mapping $h=h_{K,\theta}:\C \rightarrow \C$ which stretches by a factor $K >0$ in the direction $e^{i\theta}$. If $\theta =0$,
then
\[ h_{K,0}(x+iy) = Kx+iy.\]
For general $\theta$, pre-compose $h_{K,0}$ by a rotation of $-\theta$ and post-compose by a rotation of $\theta$ to give the expression
\begin{equation*}
h_{K,\theta}(x+iy) = x(K\cos ^{2} \theta + \sin ^{2} \theta) + y(K-1)\sin \theta \cos \theta
\end{equation*}
\begin{equation}
\label{eq3.4}
+ i \left [ x(K-1)\cos \theta \sin \theta + y (K \sin ^{2} \theta + \cos ^{2} \theta ) \right ]
\end{equation}
or
\begin{equation}
\label{eq3.4a}
h_{K,\theta}(z) = \left ( \frac{ K+1}{2} \right ) z + e^{2i \theta}\left ( \frac{ K-1}{2} \right )\zbar.
\end{equation}
Using the formula for complex dilatation (see \cite{FM}), we see that
\begin{equation}
\label{eq3.3}
\mu _{h_{K,\theta}} = e^{2i\theta} \frac{K-1}{K+1},
\end{equation}
and so $\av \av \mu_{h_{K,\theta}} \av \av _{\infty} <1$ which means that $h_{K,\theta}$ is quasiconformal with constant complex dilatation.

\begin{proposition}{\cite[Proposition 3.1]{FG}}
\label{canonicalform}
Let $f:\C \rightarrow \C $ be a composition of a quadratic polynomial and an affine stretch of the form (\ref{eq3.4}). Then $f$ is linearly conjugate to
\begin{equation}
\label{eq3.8}
f_{K,\theta,c} := (h_{K,\theta})^{2}+c
\end{equation}
for some $Ke^{i\theta} \in \C \setminus \{ 0 \}$ and $c \in \C$.
Moreover, if we insist that $Ke^{i\theta} \in \Omega$, where
\begin{equation}
\label{eq3.20}
\Omega = \{\av z \av > 1, -\pi / 2 < \arg (z) \leq \pi / 2\} \cup \{ 1 \} ,
\end{equation}
then such a representation is unique.
\end{proposition}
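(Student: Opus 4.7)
The plan is to write $f = g \circ h$ with $g(w) = Aw^2 + Bw + C$ ($A \neq 0$) and $h = h_{K_0, \theta_0}$, and to seek a complex affine conjugation $\phi(z) = \lambda z + \mu$ such that $\phi^{-1} \circ f \circ \phi$ lands in the canonical form. Setting $\alpha = (K_0+1)/2$, $\beta = e^{2i\theta_0}(K_0-1)/2$, $L(z) = \alpha\lambda z + \beta\bar\lambda\bar z$, and $M = \alpha\mu + \beta\bar\mu$, a direct expansion yields
\[
\phi^{-1}(f(\phi(z))) = \frac{1}{\lambda}\Bigl[A L(z)^2 + (2AM + B)\,L(z) + (AM^2 + BM + C - \mu)\Bigr],
\]
and matching against $h_{K,\theta}(z)^2 + c$ splits into three sub-problems in descending $(z,\bar z)$-degree, to be solved successively for $\lambda$, $\mu$, and $c$.

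For existence, the linear piece vanishes iff $M = -B/(2A)$, which is an $\R$-linear equation in $\mu$ whose real determinant is $|\alpha|^2 - |\beta|^2 = K_0 \neq 0$, so $\mu$ is uniquely determined by $\lambda$. The quadratic part $AL(z)^2/\lambda$ has the form $(\hat\alpha z + \hat\beta \bar z)^2$ with $\hat\alpha = (K+1)/2 > 0$ precisely when $A\alpha^2\lambda$ is real and positive; this fixes $\arg\lambda$ modulo $\pi$ but leaves $|\lambda| > 0$ free. Any admissible $\lambda$ then produces a canonical form $f_{K,\theta,c}$, with $c$ read off from the constant term.

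For uniqueness in $\Omega$, I would classify the residual freedom by asking which linear conjugations carry one canonical form to another. A computation of $\phi^{-1}(h_{K,\theta}(\phi(z))^2 + c)$ for $\phi(z) = \nu z + \rho$ forces $\rho = 0$ (to kill the residual translation inside the squaring) and $\nu > 0$, and leaves only two admissible values $\nu = 1$ and $\nu = 1/K^2$; the latter implements the involution $(K,\theta) \mapsto (1/K,\, \theta + \pi/2)$. The condition $Ke^{i\theta} \in \Omega$, i.e.\ $K \geq 1$ with $\theta \in (-\pi/2, \pi/2]$ (including the degenerate case $K=1$), picks exactly one representative from each equivalence class. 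Since $\mu$ and $c$ are then determined by $(K, \theta)$, this settles uniqueness.

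The main obstacle is the intrinsic two-to-one $K \leftrightarrow 1/K$ duality: a stretch by $K$ in direction $\theta$ is conjugate, via scaling by $1/K$, to a stretch by $1/K$ in the perpendicular direction $\theta + \pi/2$. Handling this correctly, including the mod-$\pi$ redundancy in $\theta$ and the degenerate $K=1$ case, is the most delicate point; verifying that $\Omega$ is precisely the right slice to break the symmetry requires the case analysis outlined in the uniqueness paragraph.
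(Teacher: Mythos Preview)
The paper does not contain a proof of this proposition: it is quoted verbatim from \cite{FG} (Proposition~3.1 there) as background material, so there is nothing in the present paper to compare your argument against.

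That said, your approach is the natural one and is essentially correct. A few small imprecisions are worth flagging. First, $\mu$ is determined by the equation $\alpha\mu+\beta\bar\mu=-B/(2A)$ independently of $\lambda$, not ``by $\lambda$''. Second, the requirement that $A\alpha^{2}\lambda$ be real and positive fixes $\arg\lambda$ uniquely (modulo $2\pi$), not modulo $\pi$: replacing $\lambda$ by $-\lambda$ sends the quadratic term to its negative. Third, $|\lambda|$ is not actually free. Writing $A L(z)^{2}/\lambda=(\hat\alpha z+\hat\beta\bar z)^{2}$ with $\hat\alpha>0$ gives $\hat\alpha=\alpha\sqrt{|A\lambda|}$ and $|\hat\beta|=|\beta|\sqrt{|A\lambda|}$, so the ratio $|\hat\beta|/\hat\alpha=|\beta|/\alpha$ is fixed, but for the pair $(\hat\alpha,\hat\beta)$ to come from some $h_{K,\theta}$ one needs the extra constraint $|\hat\beta|=|\hat\alpha-1|$; this pins $|\lambda|$ down to exactly two values, corresponding to $K=K_{0}$ and $K=1/K_{0}$. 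This is precisely the $K\leftrightarrow 1/K$ duality you exploit in the uniqueness step, and your computation of the self-conjugacies of the canonical form (forcing $\rho=0$, $\nu>0$, and then $\nu\in\{1,1/K^{2}\}$, implementing $(K,\theta)\mapsto(1/K,\theta+\pi/2)$) is correct. With those corrections in place your argument goes through.
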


\begin{definition}
If $f=f_{K,\theta,c}$ for $c \in \C$ and $Ke^{i\theta} \in \Omega$,
we say that $f \in QA$.
\end{definition}

\begin{remark}
This is a slightly different normalization for the set $\Omega$ compared to \cite{FG}.
\end{remark}

The properties of mappings in $QA$ studied in \cite{FG} are summarized in the following theorem.

\begin{theorem}{\cite[Theorems 4.3 and 4.5]{FG}}
\label{list}
Let $g$ be a polynomial of degree $d \geq 2$ and let $h$ be $L$-bi-Lipschitz. Let $f=g \circ h$, then $I(f)$ is a non-empty open set and $\partial I(f)$ is a perfect set.
Further, the family of iterates $\{f^{k}:k \in \N \}$ is equicontinuous on $I(f)$ and not equicontinuous at any point of $\partial I(f)$; the sets $I(f)$, $\partial I(f)$ and $\overline{I(f)} ^{c}$ are all completely invariant; and the escaping set is a connected neighbourhood of infinity.
\end{theorem}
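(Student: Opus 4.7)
The plan is to derive all six assertions from the asymptotic behaviour of $f=g\circ h$ at infinity. Since $g(z)=a_dz^d(1+o(1))$ with $d\ge 2$ and $h$ is $L$-bi-Lipschitz, there is some $R_0>0$ with $|f(z)|\ge 2|z|$ on $V_0:=\{|z|>R_0\}$. Hence $V_0\subset I(f)$, which gives that $I(f)$ is a non-empty open neighbourhood of infinity, and one may write
\[
I(f)=\bigcup_{n\ge 0}f^{-n}(V_0),
\]
which is open. For connectedness I would argue inductively: if $V\subset V_0$ is a connected neighbourhood of infinity avoiding the finitely many critical values of $g$ in $\C$, then $g^{-1}(V)$ is a connected neighbourhood of infinity (as $\infty$ is totally ramified for the polynomial $g$), and precomposing with the bi-Lipschitz homeomorphism $h^{-1}$ preserves both properties, so the nested union is connected.

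Complete invariance of $I(f)$ is immediate from $z\in I(f)\iff f(z)\in I(f)$, and continuity and surjectivity of $f$ transfer it to $\overline{I(f)}$, $\partial I(f)$, and $\overline{I(f)}^c$. For equicontinuity on $I(f)$: on any compact $K\subset I(f)$ the entry time $z\mapsto\min\{n:f^n(z)\in V_0\}$ is locally bounded by the open cover $\{f^{-n}(V_0)\}$, so for some $N$ the iterates $f^n(K)$ with $n\ge N$ lie in a decreasing family of neighbourhoods of $\infty$ with spherical diameter tending to $0$; hence $f^n\to\infty$ locally uniformly on $I(f)$ in the spherical metric, which is exactly equicontinuity with the constant map $\infty$ as limit. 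Conversely, at any $z\in\partial I(f)$ arbitrarily small neighbourhoods contain both escaping points and non-escaping points, so equicontinuity fails.

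Non-emptiness of $\partial I(f)$ follows from a Brouwer-type degree argument: $f$ has topological degree $d\ge 2$, so it admits a fixed point in $\C$, which lies in the bounded set $I(f)^c$. Perfectness is the step I expect to require the most care. The cleanest route is to show that the backward orbit of any point of $\partial I(f)$ accumulates on all of $\partial I(f)$, mimicking the polynomial case. If $z_0\in\partial I(f)$ were isolated in $\partial I(f)$, choose a small disk $U\ni z_0$ with $U\cap\partial I(f)=\{z_0\}$; then the connected set $U\setminus\{z_0\}$ lies entirely in $I(f)$ or entirely in $\overline{I(f)}^c$. In the first case, $|f^n|\to\infty$ uniformly on $\partial U$, and a quasiregular minimum principle applied on a slightly smaller disk (chosen to avoid the discrete zero sets of the relevant $f^n$) forces $|f^n(z_0)|\to\infty$, contradicting $z_0\notin I(f)$. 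In the second case, the (infinite) backward orbit of $z_0$ lies in $\partial I(f)$ by complete invariance and in the compact set $\partial I(f)$, and together with continuity of an iterate of $f$ sending a cluster point of this backward orbit back towards $z_0$ this contradicts the isolation hypothesis.

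The principal obstacle is precisely this perfectness argument. In the holomorphic setting it follows immediately from Montel's three-point theorem, but without a uniform distortion bound on $\{f^n\}$ one must replace this by the quasiregular min/max principle combined with complete-invariance-plus-compactness, being careful to control the countable union of discrete branch sets of the iterates as given by the Stoilow corollary.
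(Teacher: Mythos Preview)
The paper does not prove this theorem at all: it is quoted from \cite[Theorems~4.3 and~4.5]{FG} as background, with no argument given, so there is no ``paper's proof'' to compare against. Your outline is therefore an independent attempt, and most of it is sound and standard: the growth estimate giving $V_0\subset I(f)$, the description $I(f)=\bigcup_n f^{-n}(V_0)$, equicontinuity via locally uniform spherical convergence to $\infty$, complete invariance, and the degree argument producing a finite fixed point are all correct.

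The perfectness argument, however, has genuine gaps. Your Case~2 is overcomplicated and the backward-orbit reasoning as written does not go through (it would need equicontinuity of $\{f^{n_k}\}$ at the cluster point, which fails precisely on $\partial I(f)$); fortunately Case~2 is in fact vacuous, since $U\setminus\{z_0\}\subset\overline{I(f)}^{\,c}$ forces $U\cap I(f)=\emptyset$, contradicting $z_0\in\overline{I(f)}$. The real issue is Case~1, and there your minimum-principle step does not work. The minimum principle for a quasiregular map requires $f^n\neq 0$ on the \emph{entire} disk, not merely on a chosen circle, and the zeros of $f^n$ (the $n$-th preimages of $0$) may well enter every fixed neighbourhood of $z_0$ as $n\to\infty$; one cannot ``choose a slightly smaller disk avoiding the zero sets'' uniformly in $n$. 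A clean replacement uses only that each $f^n$ is an open map: since $f^n(\partial D)\subset\{|w|>M_n\}$ with $M_n\to\infty$ while $|f^n(z_0)|$ stays bounded, the open set $f^n(D)$ must contain the entire disk $\{|w|\le M_n\}$ for large $n$; hence $f^n(D)$ meets $\partial I(f)$ in at least two points (once one checks $\partial I(f)$ is infinite, which follows from complete invariance and degree $\ge 2$), and by complete invariance $D$ itself meets $\partial I(f)$ in at least two points, contradicting the isolation of $z_0$.
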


\begin{definition}
Let $N(f)$ be the set of points whose orbits remain bounded
\begin{equation*}
N(f) = \{ z \in \C : \av f^{k} (z) \av <T , \text{ for some } T < \infty, \forall k \in \N \}.
\end{equation*}
\end{definition}

For $f \in QA$, it is easy to see that $N(f) = I(f)^{c}$. Further,  $N(f)$ is completely invariant by Theorem \ref{list}. This set is the direct analogue of the filled-in Julia set $K_{f}$ for polynomials, but here we are reserving the use of the symbol $K$ for distortion. Recall that $B(f)$ is the branch set of $f$,
and for $f \in QA$ the only point in the branch set is $0$.

\begin{proposition}{\cite[Theorems 5.3 and 5.4]{FG}}
\label{QAconn}
Let $f \in QA$. Then $N(f)$ is connected if and only if $I(f) \cap B(f) = \emptyset$. If $I(f)$ contains $B(f)$, then $N(f)$ has infinitely many components.
\end{proposition}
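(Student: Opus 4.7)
The plan is to exploit the conjugacy $\psi$ of Theorem \ref{thm1}, together with its extension in Theorem \ref{thm2}, to transport topological information between $I(f)$ and $I(H)$. Since $h$ fixes $0$, the branch point $0$ of $H$ satisfies $H(0) = 0$ and hence always lies in $N(H)$; the dichotomy in the proposition is then governed entirely by whether $0$ additionally lies in $N(f)$.

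For part (i), with $0 \notin I(f)$, Theorem \ref{thm2}(i) provides a locally quasiconformal homeomorphism $\psi : I(f) \to I(H)$. It therefore suffices to show $I(H)$ is simply connected in $\hat{\C}$, because then $I(f)$ is too and its complement $N(f)$ is connected. Exhaust $I(H)$ by $V_n := H^{-n}(V_0)$ with $V_0 = \{|z| > R\}$ for $R$ large enough that $V_0 \subset I(H)$. Since $0 \in N(H)$, no $V_{n-1}$ contains $0$, so the only ramification of the degree-$2$ branched cover $H : V_n \to V_{n-1}$ is the double point at $\infty$; by Riemann--Hurwitz, a degree-$2$ cover of a disk ramified at a single interior point is again a disk, so each $V_n$ is inductively simply connected. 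The increasing union $I(H) = \bigcup_n V_n$ is then simply connected, giving (i).

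For part (ii), with $B(f) = \{0\} \subset I(f)$, I would mimic the classical polynomial argument, where escape of the critical point disconnects the filled Julia set. The concrete tool is a Green's function $G$ on $I(f)$: set $G(z) := \log|\psi(z)|$ on the initial domain of $\psi$ from Theorem \ref{thm1}, and extend throughout $I(f)$ by the functional equation $G(f(z)) = 2G(z)$. This produces a single-valued $G$ on all of $I(f)$ (even though $\psi$ itself does not extend, by Theorem \ref{thm2}(ii)) because the equation forces a common value on each $f$-fiber. Since $0 \in I(f)$ is non-preperiodic, the critical values of $G$ form the infinite decreasing sequence $G(0), G(0)/2, G(0)/4, \dots$ accumulating at $0 = \inf_{I(f)} G$. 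A standard level-set argument shows that the number of components of $G^{-1}(t)$ doubles each time $t$ crosses such a critical value, so the component count grows without bound as $t \to 0^+$, and each component encloses a distinct component of $N(f)$, yielding infinitely many components.

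The main obstacle is carrying out the Green's-function and level-set analysis in the quasiregular setting, where the usual complex-analytic tools are unavailable. One instead appeals to Stoilow factorization (Theorem \ref{Stoilow}) to handle branching topologically, to the bi-Lipschitz nature of $h$ to ensure the asymptotic geometry of $G$ near $\partial I(f)$ mirrors the holomorphic case, and to the perfectness of $\partial I(f)$ from Theorem \ref{list} to rule out degenerate coincidences amongst the separating level curves as $t \to 0^+$.
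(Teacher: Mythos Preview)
This proposition is not proved in the present paper at all: it is quoted as a preliminary result from \cite{FG} (their Theorems 5.3 and 5.4), so there is no proof here against which to compare your attempt. Your strategy of deducing it from Theorems \ref{thm1} and \ref{thm2} is therefore backwards relative to the paper's logical order, though it is not formally circular.

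More importantly, part (ii) of your argument contains a genuine error. You set $G(z)=\log|\psi(z)|$ and claim the functional equation $G(f(z))=2G(z)$. But the conjugacy of Theorem \ref{thm1} is $\psi(f(z))=h(\psi(z))^{2}$, not $\psi(f(z))=\psi(z)^{2}$, so
\[
G(f(z))=\log|\psi(f(z))|=2\log|h(\psi(z))|,
\]
and by \eqref{hpolar} one has $|h(w)|=|w|\,(1+(K^{2}-1)\cos^{2}(\arg w-\theta))^{1/2}$, which is not $|w|$ when $K>1$. Hence $G(f(z))\neq 2G(z)$ already on the initial domain, and your extension procedure is inconsistent. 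The whole level-set and critical-value count downstream of this rests on the exact doubling relation, so the argument as written does not go through. (One could try to salvage it with a ``quasi-Green'' function satisfying $G\circ f=2G+O(1)$, but that requires substantially more work than you indicate.)

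For part (i), your detour through $\psi$ is unnecessary and introduces a gap: Theorem \ref{thm2}(i) only asserts that $\psi$ is injective into $I(H)$, not that it is onto, so you cannot immediately transfer simple connectivity from $I(H)$ to $I(f)$. The Riemann--Hurwitz pullback argument you give for $H$ applies verbatim to $f$ itself (the branching of $f$ is the single point $0$, assumed to lie in $N(f)$), so you should simply run it directly on the exhaustion $f^{-n}(U)$ of $I(f)$.
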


Contained in the proof of Theorem 6.4 of \cite{FG} is the observation that $h$ takes the form
\begin{equation}
\label{hpolar}
h(re^{i\varphi}) = r(1+(K^2-1)\cos^2(\varphi-\theta))^{1/2} \exp \left [ i \left( \theta + \tan^{-1} \left ( \frac{\tan(\varphi-\theta)}{K} \right ) \right ) \right ]
\end{equation}
in polar coordinates.
Hence every mapping of the form $h^2$ maps rays to rays and there exists a fixed ray, say of angle $\phi$,
where $\phi$ depends only on $K,\theta$. These fixed rays will play an important role in proving Theorem \ref{thm3a}, and will
also be studied in further detail in \cite{FF}.

\subsection{Logarithmic coordinates}

To prove Theorem \ref{thm1}, we will need to use the logarithmic transform which we briefly outline here.

Let $f$ be a function defined in a neighbourhood $U = \{ \av z \av >R \}$ of infinity and which grows like a polynomial.
That is, there exist constants $A,B,n$ such that
\[ A \leq \frac{ \av f(z) \av }{\av z \av ^n} \leq B.\]
Then $f$ lifts to a function
\[ \ft (X) = \log f(e^X) \]
for $\Re X > \log R$.

\begin{definition}
The function $\ft$ is called the logarithmic transform of $f$, and is unique upto addition of an integer multiple of $2\pi i$.
\end{definition}

\begin{lemma}
\label{logtranscomp}
Suppose $f,g$ are two functions whose logarithmic transforms exist. Then $\widetilde{f \circ g} = \ft \circ \widetilde{g}$ in a suitable neighbourhood of infinity.
\end{lemma}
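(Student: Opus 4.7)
The plan is to prove the identity by direct computation using the definition $\widetilde{F}(X)=\log F(e^X)$, and then to deal with the domain issue so that the composition actually makes sense in some neighbourhood of infinity.

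First, I would verify the domain compatibility. Suppose $f$ is defined for $|z|>R_f$ and $g$ is defined for $|z|>R_g$, so that $\ft$ is defined on $\{\Re X>\log R_f\}$ and $\widetilde{g}$ on $\{\Re X>\log R_g\}$. Since $g$ grows like a polynomial, there exist constants $A>0$ and $n\ge 1$ with $|g(z)|\ge A|z|^n$ for $|z|$ sufficiently large. Hence for $X$ with $\Re X$ large enough,
\begin{equation*}
\Re \widetilde{g}(X)=\log|g(e^X)|\ge \log A+n\Re X>\log R_f.
\end{equation*}
So there is a half-plane $\{\Re X>M\}$ on which $\widetilde{g}$ is defined and sends into the domain of $\ft$, making the composition $\ft\circ\widetilde{g}$ legitimate.

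Next, the core calculation. By the definition of $\widetilde{g}$, we have $e^{\widetilde{g}(X)}=g(e^X)$ (this is the defining property up to the $2\pi i$ ambiguity). Therefore, for $X$ in the half-plane above,
\begin{equation*}
\ft(\widetilde{g}(X))=\log f\bigl(e^{\widetilde{g}(X)}\bigr)=\log f(g(e^X))=\log(f\circ g)(e^X)=\widetilde{f\circ g}(X).
\end{equation*}
This chain of equalities is the substance of the lemma.

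The main subtlety, which I expect to be the only real obstacle, is the branch ambiguity of the logarithm: both $\ft\circ\widetilde{g}$ and $\widetilde{f\circ g}$ are determined only up to addition of an integer multiple of $2\pi i$, as noted in the definition. I would handle this by pointing out that once one fixes any particular branch of $\widetilde{g}$ on the chosen half-plane, the identity $e^{\widetilde{g}(X)}=g(e^X)$ is exact, and then choosing the branch of $\ft$ so that $\ft(\widetilde{g}(X))$ agrees with $\widetilde{f\circ g}(X)$ at one point forces them to agree everywhere on the connected half-plane by continuity, since their difference is a continuous $2\pi i\Z$-valued function. With this branch choice convention, the desired identity holds, which is all the lemma claims.
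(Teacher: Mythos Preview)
Your argument is correct and is exactly the approach the paper takes: the paper's entire proof is the sentence ``This is obvious from the definition,'' and you have simply unpacked that sentence, including the domain check and the branch-of-logarithm bookkeeping that the paper leaves implicit.
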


\begin{proof}
This is obvious from the definition.
\end{proof}

\begin{lemma}
\label{glogtrans}
Let $g(z) = z^2+c$. Then $\widetilde{g}(X) = 2X+ \rho(X)$, where $\rho(X) = O(e^{-2\Re X})$.
\end{lemma}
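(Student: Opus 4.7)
The plan is to compute $\widetilde{g}(X)$ directly from the definition and then extract the claimed asymptotic. By definition,
\[
\widetilde{g}(X) = \log g(e^X) = \log\bigl(e^{2X} + c\bigr),
\]
for $\Re X$ large enough that $g(e^X) \neq 0$ and a branch of $\log$ can be chosen consistently. The natural first step is to pull out the dominant term by writing
\[
e^{2X} + c = e^{2X}\bigl(1 + c e^{-2X}\bigr),
\]
so that $\widetilde{g}(X) = 2X + \log\bigl(1 + ce^{-2X}\bigr)$, with a choice of branch of $\log$ consistent with the conventions described before Lemma \ref{glogtrans} (any other choice only shifts by an integer multiple of $2\pi i$, which is exactly the indeterminacy allowed in the definition of the logarithmic transform).

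Next I would define $\rho(X) := \log\bigl(1 + ce^{-2X}\bigr)$ and estimate it for $\Re X$ large. Since $|ce^{-2X}| = |c|\,e^{-2\Re X}$, the quantity $ce^{-2X}$ lies in a small disc about $0$ once $\Re X > \tfrac{1}{2}\log(2|c|)$, say. On this small disc one has the elementary bound $|\log(1+w)| \leq 2|w|$, and applying this with $w = ce^{-2X}$ yields $|\rho(X)| \leq 2|c|\,e^{-2\Re X}$, which is precisely the assertion $\rho(X) = O(e^{-2\Re X})$.

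There is no real obstacle here; the only subtlety worth being careful about is branch selection for $\log$, which is harmless because the logarithmic transform is only defined up to integer multiples of $2\pi i$, and in the neighbourhood of infinity the argument $1 + ce^{-2X}$ stays inside a small disc about $1$, so the principal branch of $\log$ gives a well-defined holomorphic $\rho$ there. This is exactly the setting in which the asymptotic $\rho(X) = O(e^{-2\Re X})$ is meaningful and needed for the later B\"ottcher coordinate construction.
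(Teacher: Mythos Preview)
Your proof is correct and follows exactly the same route as the paper: factor $e^{2X}+c = e^{2X}(1+ce^{-2X})$ to obtain $\widetilde{g}(X)=2X+\log(1+ce^{-2X})$, then read off $\rho(X)=O(e^{-2\Re X})$. In fact you supply more detail than the paper does (the explicit bound $|\log(1+w)|\leq 2|w|$ and the remark on branch choice), but the argument is the same.
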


\begin{proof}
We have
\begin{align*}
\widetilde{g}(X) &= \log (e^{2X}+c ) \\
&= \log ( e^{2X} (1+ce^{-2X})) \\
&= 2X + \log (1+ce^{-2X}),
\end{align*}
which proves the lemma.
\end{proof}

\begin{lemma}
Let $h= h_{K,\theta}$ be given by \eqref{eq3.4a}. Then
\begin{equation*}
\htt (X)=X + \log \left ( \frac{K+1}{2} + e^{2i\theta} \left(\frac{K-1}{2} \right) e^{-2i \Im X} \right ).
\end{equation*}
and
\begin{equation*}
\widetilde{h^{-1}}(X) = X +  \log \left ( \frac{K+1}{2K} - e^{2i\theta} \left(\frac{K-1}{2K} \right) e^{-2i \Im X} \right).
\end{equation*}
\end{lemma}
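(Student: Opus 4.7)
The plan is a direct computation in both cases, exploiting the simple structure of the $\R$-linear map $h$ after substituting $z = e^X$. The key observation is that if $X = \Re X + i\Im X$, then $\overline{e^X} = e^{\Re X - i\Im X} = e^X \cdot e^{-2i\Im X}$, which factors cleanly.

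For the first identity, I would plug $z = e^X$ into the formula \eqref{eq3.4a} for $h$, factor $e^X$ out of both terms using the identity above, and then take logarithms using Lemma \ref{logtranscomp}'s style of definition $\htt(X) = \log h(e^X)$. This gives
\begin{equation*}
h(e^X) = e^X \left( \frac{K+1}{2} + e^{2i\theta}\left(\frac{K-1}{2}\right) e^{-2i\Im X} \right),
\end{equation*}
and taking $\log$ yields the claimed formula, modulo the usual ambiguity of $2\pi i \Z$. One should note that the bracketed expression has modulus at least $\frac{K+1}{2} - \frac{K-1}{2} = 1 > 0$ (by the triangle inequality), so the logarithm is well defined on any simply connected region with an appropriate branch choice.

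For the second identity, the main preliminary step is to write $h^{-1}$ in the same $\R$-linear form $w \mapsto \alpha w + \beta \bar w$. Inverting the system \eqref{eq3.4a} (or equivalently using the formula $h^{-1}(w) = (\bar a w - b\bar w)/(|a|^2-|b|^2)$ for $h(z)=az+b\bar z$, with $a=(K+1)/2$, $b=e^{2i\theta}(K-1)/2$ and $|a|^2-|b|^2 = K$), I would obtain
\begin{equation*}
h^{-1}(w) = \frac{K+1}{2K} w - e^{2i\theta}\left( \frac{K-1}{2K} \right) \bar w.
\end{equation*}
Then I repeat the substitution $w = e^X$, factor out $e^X$ as before, and take logarithms; the sign flip in the second term of the lemma comes precisely from the minus sign appearing in the formula for $h^{-1}$.

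There is no real obstacle here; the only thing to be careful about is to verify that the arguments of the two logarithms stay away from $0$ so that a single-valued branch exists on a neighbourhood of infinity in the $X$-plane (equivalently, on $\{|z|>R\}$). For $\htt$ this is immediate from $|a|>|b|$, and for $\widetilde{h^{-1}}$ from $|(K+1)/(2K)| > |(K-1)/(2K)|$, both of which hold for $K \geq 1$.
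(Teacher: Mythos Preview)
Your proposal is correct and is exactly the computation the paper has in mind; the paper's own proof simply reads ``This is obvious from the definition of $h$.'' Your extra care in checking that the bracketed expressions are bounded away from $0$ (so a branch of $\log$ exists) is a welcome addition.
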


\begin{proof}
This is obvious from the definition of $h$.
\end{proof}

\begin{definition}
\label{varphixi}
We define the functions
\[ \varphi(X) = \htt(X) - X,\]
and
\[ \xi(X) = \widetilde{h^{-1}}(X) - X.\]
\end{definition}

It is clear from the definition that $\av \varphi\av , \av \xi\av $ are both bounded above and below.

Recalling that $f = g \circ h$, it follows that, using the notation above, the logarithmic transform of $f$ is
\begin{equation}
\label{flogtrans}
\ft(X) = 2X + 2\varphi(X) + \rho( X+\varphi(X)).
\end{equation}
To see that $\ft$ is well-defined, note that
\[ \ft(X+2\pi i) = 2X +4 \pi i +2 \varphi( X+2 \pi i) + \rho(X+2\pi i + \varphi(X+2\pi i) ).\]
It is easy to see that $\varphi (X+2 \pi i) = \varphi (X)$, and so
\[ \ft(X+2\pi i) - \ft(X) = 4 \pi i+ \rho(X+2\pi i + \varphi(X+2\pi i) ) - \rho(X + \varphi (X)).\]
The left hand side of this equation is a multiple of $2 \pi i$, whereas the right hand side
differs from a multiple of $2 \pi i$ by something small for large $\Re X$, and hence by $0$.

\section{Proof of Theorem \ref{thm1}}

\subsection{Outline}

Let $g(z) = z^2+c$ and $h=h_{K,\theta}$ for $K>1$ and $\theta \in (-\pi / 2, \pi /2]$.
Then $f=g \circ h\in QA$ and we consider $f$ in a neighbourhood of infinity, say $U=\{ \av z \av >R\}$.
To prove Theorem \ref{thm1}, we will do the following.

\begin{itemize}
\item Writing $H=h^2$, define a branch $\psi_1$ of $H^{-1} \circ f$ in $U$.
\item Show $\psi_1 (z) = z +o(1)$ near infinity and $\psi_1$ is asymptotically conformal.
\item Inductively define a branch $\psi_{k+1}$ of $H^{-(k+1)} \circ f^{k+1}$ in $U$ by considering
$H^{-1} \circ \psi_k \circ f$.
\item Show $\psi_k(z) = z +o(1)$ near infinity and $\psi_k$ is asymptotically conformal.
\item Show the sequence $\psi_k$ converges locally uniformly to the required B\"{o}ttcher coordinate.
\end{itemize}

\subsection{The sequence $\psi_k$}

Firstly, define an analytic branch $p_{1}$ of $\log (1 + c/z^{2})$ in $U$, shrinking $U$ if necessary,
so that $\lim _{z \rightarrow \infty} \varphi _{1}(z) = 0$.
Then $g(z) = z^{2} \exp p_{1}(z)$ in $U$ and we can choose an analytic square root $q_{1}$ given by
\begin{equation*}
\label{conj3}
q_{1}(z) = z \exp p_{1} (z)/2.
\end{equation*}
such that $q_{1}^{2} =g$ in $U$.
We can also assume that $q_{1}$ is injective in $U$,
since if $q_{1}(z) = q_{1}(w)$, then $g(z) = g(w)$ and so $z=\pm w$, but $q_{1}(w) \neq q_{1}(-w)$ since expanding the expression for $q_1$ gives $q_{1}(z) = z+o(1)$ near infinity.
Then we define
\begin{equation*}
\psi _1(z) = h^{-1}(q_1(h(z))).
\end{equation*}
We can write $\psi _1(z) = z+R_1(z)$, and assume for now that $R_1(z) =o(1)$ for large $\av z \av$.

We continue defining the functions $\psi_k(z)=z+R_k(z)$ by induction. For $k \geq 1$, define a continuous branch $p_{k}$ of
\begin{equation*}
\log \left ( 1 + \frac{c+R_{k-1}(z^{2}+c)}{z^{2}} \right )
\end{equation*}
in $U$ so that $\lim _{z \rightarrow \infty} p _{k}(z) = 0$,
assuming $R_{k-1}(z)=o(1)$.
Then $\psi_{k-1} (g(z)) = z^{2} \exp p_{k}(z)$ in $U$ and we can choose a continuous square root $q_{k}=z\exp(p_k/2)$ such that $q_{k}^{2} =\psi_{k-1} \circ g$ in $U$.
We also observe that $q_{k}$ is injective near infinity, since if $q_{k}(z) = q_{k}(w)$, then $\psi_{k-1}(g(z)) = \psi_{k-1}(g(w))$ and so $z=\pm w$ since $\psi_{k-1}$ is injective, but $q_{k}(z) \neq q_{k}(-w)$ since expanding the expression for $q_k$ gives $q_{k}(z) = z+o(1)$ in $U$.
This means that $\psi _k= h^{-1} \circ q_k \circ h$ is injective in a neighbourhood of infinity.

To prove Theorem \ref{thm1}, we will need to prove the following propositions.

\begin{proposition}
\label{bprop1}
The functions $\psi_k$ can be written as
\[\psi_k(z) = z+ R_k(z),\]
in $U$, where $R_k(z) = o(1)$.
Moreover, the $\psi_k$ converge uniformly to a function $\psi$
in $U$ and
\[ \psi (z) = z + R(z),\]
where $R(z) = o(1)$.
\end{proposition}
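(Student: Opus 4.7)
The plan is to establish both the form and the uniform convergence by a single induction, with the functional equation
\[ H(\psi_k(z)) = \psi_{k-1}(f(z)) \]
in the central role. This identity is immediate from $\psi_k = h^{-1} \circ q_k \circ h$, $q_k^2 = \psi_{k-1} \circ g$, and $f = g \circ h$, since $H(\psi_k(z)) = h(\psi_k(z))^2 = q_k(h(z))^2 = \psi_{k-1}(f(z))$. It is convenient to include the case $k=1$ by setting $\psi_0 = \operatorname{id}$, which is consistent with $q_1^2 = g$.

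I would first prove by induction on $k$ that $R_k(z) = O(1/\av z\av)$ on $U$, with an implicit constant that can be taken uniform in $k$ after shrinking $U$. For the base case, $p_1(z) = \log(1 + c/z^2) = O(1/\av z\av^2)$, so $q_1(z) = z\exp(p_1/2) = z + O(1/\av z\av)$; since $h$ is real-affine and bi-Lipschitz, the equation $h(\psi_1(z)) - h(z) = q_1(h(z)) - h(z)$ then forces $\psi_1(z) - z = O(1/\av z\av)$. For the inductive step, the hypothesis $R_{k-1}(w) = O(1/\av w\av)$ gives $R_{k-1}(z^2+c) = O(1/\av z\av^2)$, which is lower order than $c/z^2$, so $p_k = O(1/\av z\av^2)$ with a constant independent of $k$; the base-case calculation then propagates to give $R_k(z) = O(1/\av z\av)$ uniformly.

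Next I would use the functional equation to control the differences $\psi_{k+1} - \psi_k$. Subtracting two consecutive instances yields
\[ H(\psi_{k+1}(z)) - H(\psi_k(z)) = (\psi_k - \psi_{k-1})(f(z)), \]
and factoring the left-hand side as $(h(\psi_{k+1}) - h(\psi_k))(h(\psi_{k+1}) + h(\psi_k))$ and invoking the real-linearity of $h$, its bi-Lipschitz property, and $\av h(\psi_{k\pm 1}(z))\av \sim \av h(z) \av \sim \av z\av$ for large $\av z\av$, one sees this left-hand side has modulus comparable to $\av z\av \cdot \av \psi_{k+1}(z) - \psi_k(z)\av$. Since $\av f(z)\av \sim \av z\av^2$, rearranging gives the recursion
\[ \av \psi_{k+1}(z) - \psi_k(z) \av \leq \frac{C}{\av z\av}\av (\psi_k - \psi_{k-1})(f(z))\av, \]
which, iterated from the base estimate $\av \psi_1(z) - z\av = O(1/\av z\av)$, produces super-exponential decay of the form $\av \psi_{k+1}(z) - \psi_k(z) \av = O(1/\av z\av^{2^k - 1})$. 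A telescoping sum then gives uniform convergence of $\psi_k$ on $U$ to a limit $\psi$, and $\av R(z)\av = \av \psi(z) - z \av = O(1/\av z\av) = o(1)$ follows because the first term of the sum dominates.

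The main technical obstacle is keeping all the implicit constants uniform in $k$. Each step of the recursion doubles the exponent of $1/\av z\av$ but may introduce an extra multiplicative factor, so $U = \{\av z\av > R\}$ must be taken with $R$ large enough that these accumulated factors are absorbed by the extra power of $\av z\av$ gained at each stage. A secondary bookkeeping concern is the consistent selection of the continuous branches $p_k$ and square roots $q_k = z\exp(p_k/2)$: the inductive hypothesis $R_{k-1} = o(1)$ keeps the argument of the logarithm inside a small disk around $1$ for $\av z\av$ large, so the canonical branch is available and preserved under the recursion.
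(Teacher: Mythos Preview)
Your argument is correct and takes a genuinely different route from the paper's. The paper passes to logarithmic coordinates, setting $F_k=\widetilde{\psi_k}$ and $T_k=F_k-\mathrm{id}$, and then proves two lemmas: a uniform bound $\av T_k(X)\av\le C_8 e^{-\alpha\Re X}$ for a fixed $\alpha\in(1,2)$, and a difference estimate $\av F_{k+1}(X)-F_k(X)\av\le C_9 e^{-\alpha^k\Re X}$. Both are obtained by writing $F_{k+1}$ in terms of $F_k$ via $\widetilde{h^{-1}}\circ\widetilde S\circ F_k\circ\widetilde g\circ\htt$ and exploiting the exact cancellation $\varphi(X)+\xi(X+\varphi(X))=0$ together with a Lipschitz bound on $\xi$. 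Translating back gives $R(z)=O(\av z\av^{1-\alpha})$ for any $\alpha<2$.

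You instead stay in the $z$-plane and use the algebraic identity $H(a)-H(b)=h(a-b)\,h(a+b)$, which is available because $h$ is $\R$-linear; combined with the uniform $R_k=O(1/\av z\av)$ bound you establish first, this gives the clean recursion $\av\psi_{k+1}-\psi_k\av\lesssim\av z\av^{-1}\av(\psi_k-\psi_{k-1})\circ f\av$ and hence decay like $\av z\av^{-(2^k-1)}$. This is more elementary and in fact slightly sharper (you get $R_k=O(\av z\av^{-1})$ rather than $O(\av z\av^{-\beta})$ for $\beta<1$). The price is that your method does not immediately yield control on the complex dilatations $\mu_{\psi_k}$: the paper's logarithmic setup is reused verbatim to prove Proposition~\ref{bprop2} by estimating $(F_k)_X$ and $(F_k)_{\xbar}$ through the same inductive machinery, whereas your approach would require a separate argument for that. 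Your remark about absorbing the accumulated constants by taking $R$ large is exactly right and mirrors the paper's repeated enlargement of $\sigma$.
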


\begin{proposition}
\label{bprop2}
The function $\psi$ is quasiconformal in $U$ and, further, is asymptotically conformal.
\end{proposition}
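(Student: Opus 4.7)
The strategy is to reduce both conclusions to a single fact: $\psi$ is, up to conjugation by the affine map $h$, a holomorphic function whose derivative tends to $1$ at infinity.

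For quasiconformality, observe that each $\psi_k = h^{-1} \circ q_k \circ h$ is the composition of the affine map $h$ (constant complex dilatation, as in \eqref{eq3.3}), the analytic and hence conformal square root $q_k$, and the affine map $h^{-1}$. Thus every $\psi_k$ is $K(h)^2$-quasiconformal on $U$, and is injective there by the argument preceding the statement of the proposition. Since $\psi_k \to \psi$ uniformly on $U$ with $\psi(z) = z + o(1)$ nonconstant (Proposition \ref{bprop1}), the standard compactness theorem for quasiconformal mappings gives that $\psi$ itself is $K(h)^2$-quasiconformal.

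For asymptotic conformality, set $q = h \circ \psi \circ h^{-1}$, so that $q = \lim q_k$ uniformly on a suitable neighbourhood of infinity (since $q_k = h \circ \psi_k \circ h^{-1}$). Each $q_k$ is holomorphic, so $q$ is holomorphic. From $\psi(z) = z + o(1)$ together with the affine form of $h, h^{-1}$, a direct substitution gives $q(w) = w + o(1)$ as $|w| \to \infty$. Applying Cauchy's integral formula to $q(w) - w$ on circles $\{|\zeta - w| = |w|/2\}$ upgrades this $C^0$ decay to $q'(w) \to 1$ as $|w| \to \infty$. Writing $h(z) = az + b\bar z + c$ and $h^{-1}(w) = Aw + B\bar w + C$, the identity $h^{-1}\circ h = \mathrm{id}$ forces
\[
Aa + B\bar b = 1, \qquad Ab + B\bar a = 0.
\]
The chain rule applied to $\psi = h^{-1}\circ q \circ h$ with $q$ holomorphic then yields
\[
\partial_z\psi(z) = Aa\,q'(h(z)) + B\bar b\,\overline{q'(h(z))}, \qquad \partial_{\bar z}\psi(z) = Ab\,q'(h(z)) + B\bar a\,\overline{q'(h(z))},
\]
which reduce to $1$ and $0$ respectively when $q'(h(z)) = 1$. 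Continuity of $\mu_\psi = \partial_{\bar z}\psi / \partial_z\psi$ as a function of $q'(h(z))$ near $1$, together with $q'(h(z)) \to 1$, gives $\mu_\psi(z) \to 0$ as $|z| \to \infty$.

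The only step that is not routine is the upgrade from $q - \mathrm{id} = o(1)$ to $q' - 1 = o(1)$, and this is precisely where the analyticity of each $q_k$ (preserved in the uniform limit) is essential: Cauchy estimates provide the upgrade in one line, whereas for a general quasiconformal remainder no such upgrade would be available. Once this is in hand, the remainder of the argument is the standard linear algebra of composing two affine maps with a near-identity holomorphic map.
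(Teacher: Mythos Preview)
Your argument hinges on the claim that each $q_k$ is holomorphic, and this is false for $k\geq 2$. Recall from the construction that $q_k$ is a square root of $\psi_{k-1}\circ g$; while $g$ is holomorphic, $\psi_{k-1}=h^{-1}\circ q_{k-1}\circ h$ is genuinely quasiconformal (not conformal) because conjugation by the $\R$-linear map $h$ does not preserve holomorphy. Concretely, with $h(z)=az+b\bar z$ and $h^{-1}(w)=Aw+B\bar w$, one has $\partial_{\bar z}\psi_1(z)=Ab\,q_1'(h(z))+B\bar a\,\overline{q_1'(h(z))}$, which vanishes only when $q_1'$ is real-valued. Hence $R_1$ is not holomorphic, $p_2$ is merely continuous (as the paper is careful to say), and $q_2$ is not analytic. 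The same propagates to all higher $k$.

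Both of your conclusions collapse once this is recognised. For the quasiconformality bound, the crude chain $K(\psi_k)\leq K(h)^2K(q_k)=K(h)^2K(\psi_{k-1})$ gives only $K(\psi_k)\leq K(h)^{2k}$, which is useless for passing to the limit. For asymptotic conformality, $q=\lim q_k$ is no longer a uniform limit of holomorphic functions, so you cannot invoke Cauchy estimates to upgrade $q-\mathrm{id}=o(1)$ to $q'-1=o(1)$; indeed for a merely quasiconformal $q$ no such upgrade is available. This is exactly the obstacle that forces the paper into logarithmic coordinates: the inductive estimates on $(F_k)_X$ and $(F_k)_{\xbar}$ in Proposition~\ref{blemma8} track how the non-holomorphic contributions from $h$ and $h^{-1}$ nearly cancel (via the identities of Corollary~\ref{blemma0cor}), yielding $|\mu_{\psi_k}(z)|\leq C|z|^{-\alpha}$ uniformly in $k$. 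That uniform decay is the substance of the proof, and your proposal bypasses rather than supplies it.
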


We will postpone the proof of these two propositions until the next section. It seems difficult to prove these propositions directly, and so the proofs make use of the logarithmic transforms of the functions
$\psi_k$.

With these results in hand,
by the construction,
\begin{equation*}
h(z)^{2} = \psi_{k-1}(f(\psi_{k}^{-1}(z)))
\end{equation*}
for all $k \geq 1$.
Taking the limit as $k \to \infty$, we have $\psi (f( \psi ^{-1}(z))) = h(z)^{2}$ for $z \in U$.
That is, the following diagram commutes.
\begin{center}
\begin{tikzpicture}[description/.style={fill=white,inner sep=2pt}]
\matrix (m) [matrix of math nodes, row sep=3em,
column sep=2.5em, text height=1.5ex, text depth=0.25ex]
{U & \psi(U) \\
f(U) & h^2(\psi(U)) \\ };
\path[->,font=\scriptsize]
(m-1-1) edge node[auto] {$ \psi $} (m-1-2)
(m-1-2) edge node[auto] {$ h^2 $} (m-2-2)
(m-1-1) edge node[auto] {$ f $} (m-2-1)
(m-2-1) edge node[auto] {$ \psi $} (m-2-2);
\end{tikzpicture}
\end{center}
This proves the theorem.

\section{Logarithmic transforms of $\psi_k$}

In this section, we will take the logarithmic transforms of the $\psi_k$ and use them to prove Propositions \ref{bprop1} and \ref{bprop2}.
Let $L$ be the half-plane $\Re (X) >\sigma$, where $\sigma$ is large, and so $L$ corresponds to a neighbourhood $U$ of infinity in the $z$-plane.
In $L$,
for $k \geq 0$, define $F_0(X) = X$ and
\begin{equation}
\label{fk+1}
F_{k+1} = \widetilde{h^{-1}} \circ \widetilde{S} \circ F_k \circ \widetilde{g} \circ \htt ,
\end{equation}
where $\widetilde{S}(X)=X/2$,
and write
\begin{equation*}
F_k(X) = X+ T_k(X).
\end{equation*}
Here, $T_k$ measures how far away $F_k$ is from the identity in $L$.
Then the logarithmic transform of our sequence $\psi_k$ is $\widetilde{\psi_k}(X) = F_k(X)$ by Lemma \ref{logtranscomp}.

\subsection{Preliminary observations}

We first fix $\alpha \in (1,2)$. The role that $\alpha$ plays will be seen in Lemmas \ref{blemma3} and \ref{blemma4}.
We will work with $X \in L = \{ Z: \Re Z>\sigma\}$ where $\sigma$
may be larger than $\log R$, and will depend on $K,\theta,c,\alpha$.
The constants $C_j$ which appear will all depend on at least $K,\theta,c$, and may have other dependencies, which will be stated.

Recall $\varphi, \xi$ from Definition \ref{varphixi}.

\begin{lemma}
\label{blemma0}
There exists a constant $C_1>0$ such that $|\varphi(X)|<C_1$ and $|\xi(X)|<C_1$ for all $X \in L$. Further, we have \[\varphi (X) + \xi ( X + \varphi (X) ) = 0\] and
\[ \xi (X) + \varphi ( X+ \xi (X)) = 0.\]
\end{lemma}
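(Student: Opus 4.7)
The plan is to exploit the explicit formulas for $\widetilde{h}$ and $\widetilde{h^{-1}}$ together with the fact that $h\circ h^{-1} = h^{-1}\circ h = \mathrm{id}$.

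For the boundedness claim, I would observe that
\[ \varphi(X) = \log\left(\frac{K+1}{2} + e^{2i\theta}\frac{K-1}{2}e^{-2i\Im X}\right), \]
and similarly for $\xi$. In particular both functions depend only on $\Im X$ and not on $\Re X$, so uniform bounds on $L$ are the same as uniform bounds on $\C$. The modulus of the argument inside the logarithm for $\varphi$ ranges over $[1,K]$ by the triangle inequality (since it is the sum of two complex numbers of moduli $(K+1)/2$ and $(K-1)/2$), and hence stays bounded away from $0$ and $\infty$. Likewise, the argument inside the log for $\xi$ has modulus in $[1/K, 1]$. Choosing a principal branch of the logarithm for each and noting that the map $w\mapsto \log w$ is bounded on any compact subset of $\C\setminus\{0\}$, one obtains the constant $C_1$.

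For the compositional identities, I would use Lemma \ref{logtranscomp}: since $h\circ h^{-1} = \mathrm{id}$, the corresponding logarithmic transforms satisfy $\widetilde{h}\circ\widetilde{h^{-1}} = \mathrm{id}$ (modulo an additive $2\pi i$, which one checks vanishes here by comparing both sides at a single point, or equivalently by noting both sides are holomorphic and agree in the limit as $\Re X\to \infty$). Substituting $\widetilde{h^{-1}}(X) = X+\xi(X)$ into $\widetilde{h}(Y) = Y + \varphi(Y)$ gives
\[ X = \widetilde{h}(\widetilde{h^{-1}}(X)) = X + \xi(X) + \varphi(X+\xi(X)), \]
which yields $\xi(X) + \varphi(X+\xi(X)) = 0$. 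The mirror identity $\varphi(X)+\xi(X+\varphi(X)) = 0$ comes from $\widetilde{h^{-1}}\circ \widetilde{h} = \mathrm{id}$ by exactly the same substitution.

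The only mildly delicate point is verifying that no stray $2\pi i$-multiple appears in $\widetilde{h}\circ\widetilde{h^{-1}} = \mathrm{id}$; this is handled by fixing the branches of the logarithms in the formulas for $\varphi$ and $\xi$ once and for all (both arguments of the logs lie in a simply connected region avoiding $0$, so a principal branch is available) and checking the identity at a convenient point such as $X$ real and large, where direct substitution makes the cancellation transparent. Everything else is a routine calculation.
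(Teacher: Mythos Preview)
Your proof is correct and takes essentially the same approach as the paper, which is very terse: the paper simply says the boundedness ``follows from the definition of $\varphi$ and $\xi$ since $e^{2i\theta}(K-1)/(K+1)\in\D$'' and the identities are ``just translating the fact that $h$ and $h^{-1}$ are mutual inverses to the logarithmic coordinate setting.'' You have supplied the details that the paper omits.

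One minor quibble: in your parenthetical about the $2\pi i$ ambiguity you offer as an alternative ``noting both sides are holomorphic and agree in the limit as $\Re X\to\infty$.'' Neither clause is quite right here, since $\varphi$ and $\xi$ depend only on $\Im X$ (hence are not holomorphic in $X$ and do not change as $\Re X\to\infty$). Your primary suggestion of checking the identity at a single convenient point is, however, entirely sufficient and is the right way to pin down the branch.
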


\begin{proof}
The first part follows from the definition of $\varphi$ and $\xi$ since $e^{2i\theta}(K-1)/(K+1) \in \D$.
The second part is just translating the fact that $h$ and $h^{-1}$ are mutual inverses to the logarithmic coordinate setting.
\end{proof}

The following corollary follows by differentiating the identities of Lemma \ref{blemma0}.

\begin{corollary}
\label{blemma0cor}
The partial derivatives of $\varphi$ and $\xi $ satisfy
\begin{equation*}
\varphi_X(X) +  \xi _X (X+\varphi(X))(1+\varphi_X(X)) + \xi_{\xbar} (X+\varphi(X)) \overline{ \varphi_{\xbar}(X)} = 0
\end{equation*}
and
\begin{equation*}
\varphi_{\xbar}(X) + \xi_X(X+\varphi(X)) \varphi_{\xbar}(X) + \xi _{\xbar}(X+\varphi(X))(\overline{1+\varphi_X(X)}) = 0.
\end{equation*}
\end{corollary}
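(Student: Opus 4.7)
The plan is to treat the corollary as a direct Wirtinger chain-rule computation applied to the first identity of Lemma \ref{blemma0}, namely $\varphi(X) + \xi(X + \varphi(X)) = 0$. Because $\varphi$ and $\xi$ are only $C^1$ (coming from a quasiconformal setting) rather than holomorphic, I would not differentiate as if they were complex-analytic; instead I would use both Wirtinger operators $\partial_X$ and $\partial_{\bar X}$ and keep careful track of conjugates.

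First I would record the chain rule I intend to apply. For a $C^1$ function $u$ of $(W, \bar W)$ and a $C^1$ map $X \mapsto W(X)$, the elementary identities $\partial_X \bar W = \overline{\partial_{\bar X} W}$ and $\partial_{\bar X} \bar W = \overline{\partial_X W}$ give
\begin{align*}
(u \circ W)_X &= u_X(W)\, W_X + u_{\bar X}(W)\, \overline{W_{\bar X}}, \\
(u \circ W)_{\bar X} &= u_X(W)\, W_{\bar X} + u_{\bar X}(W)\, \overline{W_X}.
\end{align*}
I would then specialise to $u = \xi$ and $W(X) = X + \varphi(X)$, so that $W_X = 1 + \varphi_X(X)$ and $W_{\bar X} = \varphi_{\bar X}(X)$.

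Applying $\partial_X$ to the identity $\varphi(X) + \xi(X+\varphi(X)) = 0$ then yields
\[\varphi_X(X) + \xi_X(X+\varphi(X))\bigl(1+\varphi_X(X)\bigr) + \xi_{\bar X}(X+\varphi(X))\,\overline{\varphi_{\bar X}(X)} = 0,\]
which is the first claimed equation; applying $\partial_{\bar X}$ instead produces the second. Since Lemma \ref{blemma0} ensures the relevant functions are $C^1$ near infinity (they are affine pieces up to the quasiconformal logarithmic lift), both differentiations are legitimate.

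There is no genuine obstacle here: the argument involves no estimates, no convergence, and no geometry beyond correctly applying the Wirtinger chain rule. The only point requiring mild care is the bookkeeping of which Wirtinger derivative of $W$ is conjugated in each of the two formulas above, and the corresponding placement of $\overline{\varphi_{\bar X}}$ vs.\ $\overline{1+\varphi_X}$ in the final identities. The analogous differentiation of the second identity of Lemma \ref{blemma0} would give a companion pair with the roles of $\varphi$ and $\xi$ swapped, though the corollary as stated only records the pair arising from the first identity.
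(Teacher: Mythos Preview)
Your proposal is correct and is exactly the approach the paper takes: the paper simply states that the corollary ``follows by differentiating the identities of Lemma \ref{blemma0},'' and you have spelled out precisely that Wirtinger chain-rule computation. The only minor remark is that the smoothness of $\varphi$ and $\xi$ is immediate from their explicit formulas rather than from Lemma \ref{blemma0} itself, but this does not affect the argument.
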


Next, we consider small variations of $\varphi$ and $\xi$.

\begin{lemma}
\label{blemma1}
Given $\delta>0$, there exists $C_2>0$ depending on $\delta$ such that for $\av Y\av<\delta$,
we have \[\av \varphi (X+Y) - \varphi (X) \av < C_2\av Y \av \] for any $X \in L$,
and the same holds for $\xi$.
\end{lemma}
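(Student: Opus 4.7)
My approach is to obtain the estimate directly from the mean value theorem, once I have shown that $\varphi$ and $\xi$ are smooth functions of $X$ on $L$ with uniformly bounded Wirtinger derivatives $\varphi_X$, $\varphi_{\xbar}$, $\xi_X$, $\xi_{\xbar}$. The resulting constant $C_2$ will in fact depend only on $K$ and $\theta$, not on $\delta$.

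The first step is to read off the formulas from the lemma preceding Definition~\ref{varphixi}:
$$\varphi(X) = \log\!\left(\frac{K+1}{2} + e^{2i\theta}\frac{K-1}{2}\, e^{-2i\Im X}\right),$$
with the analogous expression for $\xi(X)$ obtained by replacing $(K+1)/2$ and $(K-1)/2$ with $(K+1)/(2K)$ and $-(K-1)/(2K)$. In each case the argument of the logarithm depends on $X$ only through $e^{-2i\Im X} = e^{\xbar - X}$, which lies on the unit circle. Consequently the argument of the log for $\varphi$ traces the closed disk of radius $(K-1)/2$ about $(K+1)/2$, and the argument for $\xi$ traces the closed disk of radius $(K-1)/(2K)$ about $(K+1)/(2K)$. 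Using the elementary identities $\tfrac{K+1}{2} - \tfrac{K-1}{2} = 1$ and $\tfrac{K+1}{2K} - \tfrac{K-1}{2K} = \tfrac{1}{K}$, both arguments stay uniformly bounded away from $0$.

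Since the chosen continuous branches of the logarithm are then analytic with bounded derivative on any compact set avoiding the origin, $\varphi$ and $\xi$ extend to smooth functions on all of $\C$, and in particular on $L$. The chain rule, together with the fact that $\partial_X(e^{\xbar - X}) = -e^{\xbar - X}$ and $\partial_{\xbar}(e^{\xbar - X}) = e^{\xbar - X}$ are both unimodular, yields uniform bounds on $\varphi_X$, $\varphi_{\xbar}$, $\xi_X$, $\xi_{\xbar}$ throughout $L$ by constants depending only on $K$ and $\theta$. Applying the mean value theorem along the straight-line segment $[X, X+Y]$ then gives
$$|\varphi(X+Y) - \varphi(X)| \leq C_2 |Y|,$$
and similarly for $\xi$, with a single constant $C_2 = C_2(K,\theta)$.

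There is no real obstacle in the proof: the only subtlety worth flagging is the uniform lower bound on the modulus of the two logarithm arguments, which follows entirely from $K > 1$. The $\delta$-dependence of $C_2$ in the statement turns out to be unnecessary, though stating it this way costs nothing and matches the form in which the lemma will be invoked later.
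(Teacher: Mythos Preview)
Your proof is correct. The key observation---that the arguments of the two logarithms trace circles bounded away from the origin, so $\varphi$ and $\xi$ are globally smooth with uniformly bounded Wirtinger derivatives---is exactly right, and the Lipschitz estimate then follows from integrating along the segment $[X,X+Y]$. You are also correct that the resulting $C_2$ depends only on $K,\theta$ and not on $\delta$.

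The paper takes a slightly different route: it writes
\[
\varphi(X+Y)-\varphi(X)=\log\frac{1+\nu e^{-2i\Im(X+Y)}}{1+\nu e^{-2i\Im X}},\qquad \nu=e^{2i\theta}\frac{K-1}{K+1},
\]
and Taylor expands in $\Im Y$, obtaining a leading term proportional to $|\Im Y|$ plus a remainder $o(|\Im Y|)$. The $\delta$ in the paper's statement is there precisely to control that remainder. Your derivative-bound argument avoids the remainder altogether and in effect absorbs the content of the next lemma in the paper (Lemma~\ref{blemma1a}, the uniform bound on the partial derivatives) into this one. Both approaches are short; yours is a little cleaner and gives a slightly stronger conclusion, while the paper's keeps the derivative bounds as a separate lemma since they are also needed independently in the dilatation estimates of \S5.3.
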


\begin{proof}
Write
\[ \nu = e^{2i\theta}\left ( \frac{K-1}{K+1} \right),\]
with $K \geq 1$,
so that $\nu \in \D$. Then, expanding $e^{-2i \Im Y}$ shows that
\begin{align*}
\av \varphi(X+Y) - \varphi (X) \av &= \left \av \log \frac{ 1 + \nu e^{-2i \Im (X+Y)}}{1+\nu e^{-2i\Im(X)}} \right \av \\
&= \left \av \log  \left ( 1 - \left ( \frac{2i\nu e^{-2i\Im(X)} }{1+\nu e^{-2i\Im(X)}} \right ) \Im (Y) + O((\Im Y)^2) \right ) \right \av \\
&\leq \left \av \frac{2i\nu e^{-2i\Im(X)} }{1+\nu e^{-2i\Im(X)}} \right \av  \av \Im Y \av + o(\av \Im Y \av).
\end{align*}
Since $|\Im Y| \leq |Y|$ and the coefficient of $\av \Im Y \av$ in the latter expression is uniformly bounded because $\nu \in \D$, we have the required conclusion.
Analogous calculations hold for $\xi$.
\end{proof}

The following lemma is the analogue of Lemma \ref{blemma1} for the partial derivatives.

\begin{lemma}
\label{blemma1a}
Given $\delta >0$, there exists $C_3>0$ depending on $\delta$ such that for all $\av Y\av<\delta$,
we have \[\av \varphi_X (X+Y) - \varphi_X (X) \av < C_3\av Y \av \] for any $X \in L$,
and the same holds for $\varphi_{\xbar},\xi_X$ and $\xi_{\xbar}$.
Further, there exists $C_4$ such that the modulus each of the partial derivatives
is uniformly bounded above, i.e. $|\varphi_X(X)|<C_4$ for $X \in L$ etc.
\end{lemma}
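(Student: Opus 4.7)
The plan is to derive the partial derivatives of $\varphi$ and $\xi$ in closed form and estimate them directly. Setting $-2i\Im X = \bar X - X$ and $\nu = e^{2i\theta}(K-1)/(K+1) \in \mathbb{D}$, we have
\[
\varphi(X) = \log\tfrac{K+1}{2} + \log\bigl(1 + \nu e^{\bar X - X}\bigr),\qquad \xi(X) = \log\tfrac{K+1}{2K} + \log\bigl(1 - \nu e^{\bar X - X}\bigr),
\]
so differentiation yields
\[
\varphi_X(X) = \frac{-\nu e^{\bar X - X}}{1 + \nu e^{\bar X - X}},\qquad \varphi_{\bar X}(X) = \frac{\nu e^{\bar X - X}}{1 + \nu e^{\bar X - X}},
\]
and similar expressions for $\xi_X, \xi_{\bar X}$ with denominator $1 - \nu e^{\bar X - X}$. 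Because $\bar X - X$ is purely imaginary, $|\nu e^{\bar X - X}| = |\nu| < 1$, so both denominators are bounded below by $1-|\nu|$ throughout $L$. This immediately delivers the uniform bound $C_4$.

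For the Lipschitz part, I would subtract the explicit expressions directly. Writing $w = e^{\bar X - X}$ and $w' = e^{\overline{X+Y} - (X+Y)}$ and placing the two fractions over a common denominator gives
\[
\varphi_X(X+Y) - \varphi_X(X) = \frac{\nu(w - w')}{(1+\nu w)(1+\nu w')}.
\]
Since $w - w' = e^{\bar X - X}\bigl(1 - e^{\bar Y - Y}\bigr)$ and $|1-e^{it}| = 2|\sin(t/2)| \leq |t|$ for real $t$, we have $|w-w'| \leq |\bar Y - Y| = 2|\Im Y| \leq 2|Y|$. Combined with the uniform lower bound $1-|\nu|$ on each denominator, this yields
\[
|\varphi_X(X+Y) - \varphi_X(X)| \leq \frac{2|\nu|}{(1-|\nu|)^2}\,|Y|,
\]
which is the desired estimate. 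The same computation with sign changes handles $\varphi_{\bar X}, \xi_X, \xi_{\bar X}$.

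There is essentially no obstacle here: the proof is a direct calculation from the closed-form expressions in Definition \ref{varphixi}. The only observation doing real work is that $|\nu e^{\bar X - X}| = |\nu| < 1$, which makes $\varphi$ and $\xi$ smooth bounded functions of $\Im X$ with uniformly bounded derivatives of all orders on $L$. It is worth noting that the constants produced do not actually depend on $\delta$, though the statement of the lemma permits such dependence.
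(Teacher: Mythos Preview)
Your proof is correct and follows essentially the same approach as the paper: compute the partial derivatives explicitly as rational functions of $\nu e^{-2i\Im X}$, place the difference over a common denominator, and bound the numerator by $|\Im Y|$. Your version is in fact slightly cleaner, since you use the exact inequality $|1-e^{it}|\le |t|$ rather than a Taylor expansion with an $o(|\Im Y|)$ remainder; this is what makes your constant genuinely independent of $\delta$, as you correctly observe.
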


\begin{proof}
We note that the partial derivatives of $\varphi$ and $\xi$ are
\[ \varphi_X(X) = -\frac{\nu e^{-2i\Im(X)}}{1+\nu e^{-2i\Im(X)}}, \:\:\:\:\: \varphi_{\xbar}(X) = \frac{\nu e^{-2i\Im(X)}}{1+\nu e^{-2i\Im(X)}}\]
and
\[ \xi_X(X) = \frac{\nu e^{-2i\Im(X)}}{1-\nu e^{-2i\Im(X)}}, \:\:\:\:\: \xi_{\xbar}(X) = -\frac{\nu e^{-2i\Im(X)}}{1+\nu e^{-2i\Im(X)}}.\]
Then, we have
\begin{align*}
\av \varphi_X (X+Y) - \varphi_X (X) \av &=
\left \av -\frac{\nu e^{-2i\Im(X+Y)}}{1+\nu e^{-2i\Im(X+Y)}} + \frac{\nu e^{-2i\Im(X)}}{1+\nu e^{-2i\Im(X)}} \right \av\\
&= \left \av  \frac{ \nu e^{-2i\Im(X)}(1-e^{-2i\Im(Y)})}{(1+\nu e^{-2i\Im(X)})(1+\nu e^{-2i\Im(X+Y)})} \right \av \\
&\leq \left \av \frac{ 2i\nu e^{-2i\Im(X)}}{(1+\nu e^{-2i\Im(X)})(1+\nu e^{-2i\Im(X+Y)})} \right \av \av \Im Y \av +o(\av \Im Y \av).
\end{align*}
The denominator in the coefficient of $\av \Im Y\av$ is uniformly bounded since $\nu \in \D$, and since $|\Im Y| \leq |Y|$, we have the desired conclusion.
The calculations for the other partial derivatives run analogously.
The final part of the lemma follows since $\nu \in \D$.
\end{proof}

We may assume that $\sigma$ is chosen so large that there exists $C_5>0$ such that
\begin{equation}
\label{rdecay}
\left \av \rho(X+\varphi(X)) \right \av < C_5e^{-2\Re X}
\end{equation}
for all $X \in L$.
Next, consider the behaviour of $\ft$ for $X \in L$, recalling \eqref{flogtrans}.

\begin{lemma}
\label{fbehav}
There exists a constant $C_6>0$ such that
\begin{equation*}
|\Re \ft (X) - 2 \Re X| < C_6,
\end{equation*}
for $X \in L$.
\end{lemma}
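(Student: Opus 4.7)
The plan is to read off the bound directly from the explicit expression for $\ft$ given in equation \eqref{flogtrans}, namely
\[ \ft(X) = 2X + 2\varphi(X) + \rho(X+\varphi(X)). \]
Rearranging and taking real parts gives
\[ \Re \ft(X) - 2\Re X = 2\Re \varphi(X) + \Re \rho(X+\varphi(X)), \]
so the result reduces to bounding each of the two terms on the right uniformly in $X \in L$.

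For the first term, Lemma \ref{blemma0} supplies the uniform bound $|\varphi(X)| < C_1$ on $L$, hence $|2\Re \varphi(X)| \leq 2C_1$. For the second term, the estimate \eqref{rdecay} gives $|\rho(X+\varphi(X))| < C_5 e^{-2\Re X}$, and since $\Re X > \sigma$ on $L$, this is certainly bounded by $C_5 e^{-2\sigma} \leq C_5$. Adding the two contributions yields
\[ |\Re \ft(X) - 2\Re X| \leq 2C_1 + C_5, \]
so taking $C_6 = 2C_1 + C_5$ completes the proof.

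There is no real obstacle here: the work has already been done in establishing \eqref{flogtrans}, Lemma \ref{blemma0}, and \eqref{rdecay}, and the present lemma is essentially a bookkeeping consequence, recording the fact that in logarithmic coordinates $\ft$ differs from the doubling map $X \mapsto 2X$ by a quantity whose real part is uniformly bounded on $L$. This bounded-perturbation-of-doubling form is what will be exploited in the subsequent analysis of the iterates $F_k$.
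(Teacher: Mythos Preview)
Your proof is correct and follows essentially the same approach as the paper: both use \eqref{flogtrans} to write $\Re\ft(X)-2\Re X$ in terms of $\varphi$ and $\rho$, then invoke Lemma~\ref{blemma0} and \eqref{rdecay} to bound the two pieces. The only cosmetic difference is that the paper leaves the bound in the form $2C_1 + C_5 e^{-2\Re X}$ (implicitly absorbing the $e^{-2\Re X}\le e^{-2\sigma}$ step into the choice of $C_6$), whereas you make that step explicit.
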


\begin{proof}
Recall the definition of $\ft$ from \eqref{flogtrans}. Then
\[ |\Re \ft (X) - 2 \Re X| \leq 2|\varphi(X)| + | \rho(X+\varphi(X))|.\]
By Lemma \ref{blemma0} and \eqref{rdecay}, this gives
\[ |\Re \ft (X) - 2 \Re X| <2C_1 + C_5e^{-2\Re X},\]
which proves the lemma.
\end{proof}

We note that in applications of Lemma \ref{fbehav}, we will usually use the inequality
\[ \Re \ft (X) >2\Re X -C_6,\]
for $X \in L$.

\subsection{Growth of $F_k$}
In this section, we will estimate how $|F_k|$ grows for large $\Re X$, and also show the the difference between successive terms in the sequence gets smaller as $k$ increases.

First, recall that $F_1 = \widetilde{h^{-1}} \circ \widetilde{S} \circ \widetilde{g} \circ \htt$. Writing this out in full gives
\begin{align}
\label{f1def}
F_1(X) &= X+\varphi(X) + \frac{\rho(X+\varphi (X))}{2} + \xi \left ( X+\varphi(X) + \frac{\rho(X+\varphi(X))}{2}  \right ).
\end{align}
Recall also that $T_k(X) = F_k(X)-X$ is the function that shows how far $F_k$ deviates from the identity.

\begin{lemma}
\label{blemma2}
There exists a constant $C_7>0$ such that
\[\av T_1(X) \av \leq C_7e^{- 2\Re X },\]
for $X \in L$.
\end{lemma}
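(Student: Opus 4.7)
The plan is to exploit the cancellation identity from Lemma \ref{blemma0}, namely $\varphi(X) + \xi(X + \varphi(X)) = 0$, together with the exponential decay of $\rho$ and the Lipschitz-type estimate for $\xi$ from Lemma \ref{blemma1}.

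Starting from \eqref{f1def}, I would write
\[
T_1(X) = \varphi(X) + \frac{\rho(X+\varphi(X))}{2} + \xi\!\left(X+\varphi(X) + \frac{\rho(X+\varphi(X))}{2}\right),
\]
and then add and subtract $\xi(X+\varphi(X))$ to regroup as
\[
T_1(X) = \bigl[\varphi(X) + \xi(X+\varphi(X))\bigr] + \frac{\rho(X+\varphi(X))}{2} + \Delta(X),
\]
where
\[
\Delta(X) = \xi\!\left(X+\varphi(X) + \tfrac{\rho(X+\varphi(X))}{2}\right) - \xi(X+\varphi(X)).
\]
By Lemma \ref{blemma0}, the bracketed term vanishes identically, so only the $\rho$-term and $\Delta(X)$ remain.

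For the $\rho$-term, the bound \eqref{rdecay} gives $|\rho(X+\varphi(X))| < C_5 e^{-2\Re X}$ on $L$. For $\Delta(X)$, the increment $Y := \rho(X+\varphi(X))/2$ satisfies $|Y| < C_5/2$ on $L$ (shrinking $L$ if necessary to make this less than any prescribed $\delta$), so Lemma \ref{blemma1} applies to the base point $X+\varphi(X)$ (which lies in a suitable half-plane once $\sigma$ is large enough, since $|\varphi| < C_1$) and yields $|\Delta(X)| \leq C_2 |Y| \leq (C_2 C_5/2) e^{-2\Re X}$. Combining the two contributions gives the claim with $C_7 = C_5/2 + C_2 C_5/2$.

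The only mild subtlety is to ensure that the arguments of $\xi$ still lie in the half-plane where Lemma \ref{blemma1} applies; this is handled by enlarging $\sigma$ so that both $|\varphi(X)|$ and $|\rho(X+\varphi(X))|/2$ are uniformly small compared to $\Re X - \sigma$. I do not expect any genuine obstacle here: this lemma is essentially the cancellation $\varphi + \xi \circ (\text{id} + \varphi) = 0$, perturbed by the exponentially small error term coming from $\rho$, and the estimate is a routine telescoping calculation once the cancellation is exposed.
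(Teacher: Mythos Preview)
Your proof is correct and follows essentially the same route as the paper: both arguments isolate the cancellation $\varphi(X)+\xi(X+\varphi(X))=0$ from Lemma~\ref{blemma0}, apply the Lipschitz estimate of Lemma~\ref{blemma1} to $\xi$ with increment $Y=\rho(X+\varphi(X))/2$, and then invoke \eqref{rdecay}, yielding the same constant $C_7=(1+C_2)C_5/2$. Your explicit add-and-subtract regrouping is just a slightly more verbose presentation of the paper's computation.
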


\begin{proof}
Applying Lemma \ref{blemma1} with $Y = \frac{\rho(X+\varphi(X))}{2}$ shows that
\[ \left \av  \xi \left ( X+\varphi(X) + \frac{\rho(X+\varphi(X))}{2} \right ) -  \xi \left ( X+\varphi(X)  \right ) \right \av <C_2
 \left\av \frac{\rho(X+\varphi(X))}{2} \right\av.\]
Recall from Lemma \ref{blemma0} that $\varphi(X) +\xi(X+\varphi(X))=0$. Then from \eqref{f1def} we obtain that
\[ \left \av T_1(X) \right \av < (1+C_2) \left\av \frac{\rho(X+\varphi(X))}{2} \right\av.\]
Finally, using \eqref{rdecay} implies the lemma.
\end{proof}

Recall that $\alpha \in(1,2)$. The reason $\alpha $ is introduced is the following lemma. Namely, the fact that $\alpha$ is less than $2$ allows us to give an estimate on the growth of the $T_k$ which is valid for all $k$.

\begin{lemma}
\label{blemma3}
There exists a constant $C_8>0$ depending on $\alpha$ such that
for all $k \geq 1$, we have
\[ \av T_k(X) \av <C_8 e^{-\alpha\Re X},\]
for $X \in L$.
\end{lemma}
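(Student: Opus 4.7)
The plan is a strong induction on $k$, with Lemma \ref{blemma2} supplying the base case after possibly enlarging the constant. I would first expand $F_{k+1} = \widetilde{h^{-1}}\circ \widetilde{S}\circ F_k \circ \widetilde{g}\circ \widetilde{h}$ in closed form to derive a recursive inequality for $T_{k+1}$ in terms of $T_k$. Since $\widetilde{g}\circ\widetilde{h}(X) = \widetilde{f}(X) = 2X + 2\varphi(X) + \rho(X+\varphi(X))$ and $F_k(Z)=Z+T_k(Z)$, applying $\widetilde{S}$ halves this, and applying $\widetilde{h^{-1}}(U)=U+\xi(U)$ produces
\begin{equation*}
T_{k+1}(X) = \varphi(X) + \tfrac{1}{2}\rho(X+\varphi(X)) + \tfrac{1}{2}T_k(\widetilde{f}(X)) + \xi\bigl(X + \varphi(X) + Y\bigr),
\end{equation*}
where $Y := \tfrac{1}{2}\rho(X+\varphi(X)) + \tfrac{1}{2}T_k(\widetilde{f}(X))$.

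The essential input is the cancellation from Lemma \ref{blemma0}, namely $\varphi(X) + \xi(X+\varphi(X)) = 0$. Splitting $\xi(X+\varphi(X)+Y) = \xi(X+\varphi(X)) + [\xi(X+\varphi(X)+Y)-\xi(X+\varphi(X))]$ and controlling the bracketed difference by the Lipschitz estimate of Lemma \ref{blemma1} (which is legitimate once we know $|Y|$ is bounded, and by induction even small, for $\Re X > \sigma$), the $\varphi(X)$ terms cancel and I obtain
\begin{equation*}
|T_{k+1}(X)| \leq \tfrac{1+C_2}{2}\bigl(|\rho(X+\varphi(X))| + |T_k(\widetilde{f}(X))|\bigr).
\end{equation*}

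Next I feed in the two decay estimates. Using \eqref{rdecay} gives $|\rho(X+\varphi(X))| < C_5 e^{-2\Re X}$, while Lemma \ref{fbehav} gives $\Re\widetilde{f}(X) > 2\Re X - C_6$, so the inductive hypothesis $|T_k(Z)| < C_8 e^{-\alpha\Re Z}$ applied at $Z = \widetilde{f}(X)$ yields $|T_k(\widetilde{f}(X))| < C_8 e^{\alpha C_6} e^{-2\alpha \Re X}$. Combining,
\begin{equation*}
|T_{k+1}(X)| \leq \tfrac{(1+C_2)C_5}{2}\, e^{-2\Re X} + \tfrac{(1+C_2)e^{\alpha C_6}}{2}\, C_8\, e^{-2\alpha \Re X}.
\end{equation*}
Dividing by $e^{-\alpha \Re X}$, closing the induction at level $k+1$ amounts to verifying that
\begin{equation*}
\tfrac{(1+C_2)C_5}{2}\, e^{(\alpha-2)\Re X} + \tfrac{(1+C_2)e^{\alpha C_6}}{2}\, C_8\, e^{-\alpha \Re X} \leq C_8
\end{equation*}
for all $\Re X > \sigma$. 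This is ensured by first fixing $\sigma$ so large that $\tfrac{(1+C_2)e^{\alpha C_6}}{2}\, e^{-\alpha\sigma} \leq \tfrac{1}{2}$, and then taking $C_8$ large enough both to absorb the remaining $e^{(\alpha-2)\sigma}$ term and to dominate the base case bound, where Lemma \ref{blemma2} forces $C_8 \geq C_7 e^{(\alpha-2)\sigma}$.

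The delicate point, and the reason the restriction $\alpha \in (1,2)$ is imposed at this stage, is the simultaneous balancing of the two exponents appearing in the recursion: one needs $\alpha < 2$ so that the source term $e^{-2\Re X}$ from $\rho$ decays strictly faster than the target $e^{-\alpha\Re X}$ (and likewise for the base case coming from Lemma \ref{blemma2}), and one needs $\alpha > 0$ so that the propagated term $e^{-2\alpha\Re X}$ also dominates $e^{-\alpha\Re X}$ on $L$, allowing the induction to close with a single uniform constant $C_8$. The stronger hypothesis $\alpha > 1$ is not strictly used here; I would expect it to be needed in Lemma \ref{blemma4} for the partial derivatives, where the additional factor lost upon differentiating $\widetilde{f}$ must be overcome.
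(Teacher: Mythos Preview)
Your proof is correct and follows essentially the same route as the paper: expand $F_{k+1}$ via \eqref{fk+1}, invoke the cancellation $\varphi(X)+\xi(X+\varphi(X))=0$ from Lemma~\ref{blemma0} together with the Lipschitz bound of Lemma~\ref{blemma1}, then feed in \eqref{rdecay} and Lemma~\ref{fbehav} and close the induction by choosing $\sigma$ and $C_8$ exactly as you describe. Your closing remark about where $\alpha>1$ is needed is slightly off---Lemma~\ref{blemma4} concerns the differences $F_{k+1}-F_k$, not partial derivatives, and the real use of $\alpha>1$ is in the proof of Proposition~\ref{bprop1} to conclude $R(z)=O(|z|^{1-\alpha})=o(1)$---but this does not affect the argument for the present lemma.
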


\begin{proof}
We will proceed by induction. By Lemma \ref{blemma2}, the result is true for $k=1$ if $C_8>C_7e^{(\alpha-2)\sigma}$, recalling that $\Re X > \sigma$. Let us assume then that
\begin{equation}
\label{blemma3eq1}
\av T_k(X) \av <C_8 e^{-\alpha\Re X}.
\end{equation}
We may assume that $\sigma$ is large enough that \eqref{rdecay}
is satisfied and we may apply Lemma \ref{blemma1}
with $Y =\rho (X+\varphi(X))/2 + T_k(\ft(X))/2$, so that
\begin{equation}
\label{blemma3eq2}
\left \av \xi \left(X+\varphi(X) + \frac{\rho(X+\varphi(X))}{2} + \frac{ T_k(\ft(X))}{2} \right) -
\xi (X+\varphi(X)) \right \av < C_2 \left \av \frac{\rho(X+\varphi(X))}{2} + \frac{ T_k(\ft(X))}{2} \right \av,
\end{equation}
for $X \in L$.
Using \eqref{fk+1}, we can write $F_{k+1}$ as
\begin{align}
\label{blemma3eq3}
F_{k+1}(X) &= X + \varphi (X) + \frac{\rho(X+\varphi(X))}{2} + \frac{ T_k ( \ft(X) ) }{2}\\
\notag &+ \xi \left (X + \varphi (X) + \frac{\rho(X+\varphi(X))}{2} + \frac{ T_k ( \ft(X))}{2} \right ).
\end{align}
Recalling from Lemma \ref{blemma0} that $\varphi(X)+\xi(X+\varphi(X))=0$,
then \eqref{blemma3eq3} implies that
\[ \left | T_{k+1}(X)\right | < \left ( \frac{1+C_2}{2} \right ) \left | \rho(X+\varphi(X)) + T_k(\ft(X)) \right |.\]
By the inductive hypothesis and Lemma \ref{fbehav},
\begin{align*}
|T_k(\ft(X))| &<C_8e^{-\alpha \Re \ft(X)} \\
&< C_8e^{\alpha C_6}e^{-2\alpha \Re X}.
\end{align*}
Using this and \eqref{rdecay}, we obtain
\begin{align*}
\left | T_{k+1}(X)\right | &< \left ( \frac{1+C_2}{2} \right ) \left( C_5e^{-2\Re X} + C_8e^{\alpha C_6}e^{-2\alpha \Re X} \right ) \\
&= e^{-\alpha \Re X} \left ( \frac{(1+C_2)C_5}{2} \: e^{(\alpha-2)\Re X} + \frac{(1+C_2)C_8}{2}\: e^{\alpha C_6}e^{-\alpha \Re X} \right ) \\
&< e^{-\alpha \Re X} \left ( \frac{(1+C_2)C_5}{2} \: e^{(\alpha-2)\sigma} + \frac{(1+C_2)C_8}{2}\: e^{\alpha C_6}e^{-\alpha \sigma} \right ).
\end{align*}
We may assume that $\sigma$ was chosen so large that
\[ (1+C_2)\: e^{\alpha C_6}e^{-\alpha \sigma} <1,\]
and also $C_8$ is large enough that
\[ (1+C_2)C_5 \: e^{(\alpha-2)\sigma} < C_8,\]
from which it follows that
\[ \left | T_{k+1}(X)\right |  < C_8e^{-\alpha \Re X},\]
which proves the lemma.
\end{proof}

\begin{lemma}
\label{blemma4}
For all $k \geq 1$, there exists a constant $C_9$ depending on $\alpha$ such that
\begin{equation*}
\left \av F_{k+1}(X) - F_{k}(X) \right \av < C_9 e^{-\alpha^k \Re X},
\end{equation*}
for $X \in L$.
\end{lemma}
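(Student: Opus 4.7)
The plan is to prove this by induction on $k$, exploiting the contraction structure built into the recursion \eqref{fk+1} and the fact that $\ft$ approximately doubles the real part (Lemma \ref{fbehav}). The gap between consecutive iterates satisfies a telescoping identity: since $F_{k+1} = \widetilde{h^{-1}} \circ \widetilde{S} \circ F_k \circ \ft$ and similarly for $F_k$, a direct calculation gives
\begin{equation*}
F_{k+1}(X) - F_k(X) = \widetilde{h^{-1}}\!\left(\tfrac{F_k(\ft(X))}{2}\right) - \widetilde{h^{-1}}\!\left(\tfrac{F_{k-1}(\ft(X))}{2}\right).
\end{equation*}
Writing $\widetilde{h^{-1}}(Z) = Z + \xi(Z)$ and extracting the linear part yields
\begin{equation*}
F_{k+1}(X) - F_k(X) = \tfrac{1}{2}(F_k(\ft(X)) - F_{k-1}(\ft(X))) + \xi\!\left(\tfrac{F_k(\ft(X))}{2}\right) - \xi\!\left(\tfrac{F_{k-1}(\ft(X))}{2}\right).
\end{equation*}

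Applying the Lipschitz estimate for $\xi$ from Lemma \ref{blemma1} to the last two terms (whose base points lie in $L$ and differ by at most $O(e^{-\alpha\sigma})$ thanks to Lemma \ref{blemma3}, so certainly by less than any fixed $\delta$ once $\sigma$ is large enough), I obtain the key recursive inequality
\begin{equation*}
|F_{k+1}(X) - F_k(X)| \leq \frac{1+C_2}{2}\, |F_k(\ft(X)) - F_{k-1}(\ft(X))|.
\end{equation*}
This is the whole engine of the proof. The base case $k=1$ follows from Lemma \ref{blemma2}, which gives $|F_1(\ft(X)) - F_0(\ft(X))| = |T_1(\ft(X))| \leq C_7 e^{-2\Re\ft(X)} \leq C_7 e^{2C_6} e^{-4\Re X}$ via Lemma \ref{fbehav}, which is certainly bounded by $C_9 e^{-\alpha \Re X}$ for $C_9$ large (independent of $X$, using $\Re X > \sigma$).

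For the inductive step, assuming $|F_{k+1}(X) - F_k(X)| \leq C_9 e^{-\alpha^k \Re X}$, applying the recursive inequality and Lemma \ref{fbehav} gives
\begin{equation*}
|F_{k+2}(X) - F_{k+1}(X)| \leq \tfrac{1+C_2}{2} C_9 e^{\alpha^k C_6} e^{-2\alpha^k \Re X}.
\end{equation*}
To close the induction against the target bound $C_9 e^{-\alpha^{k+1} \Re X}$, one needs
\begin{equation*}
\tfrac{1+C_2}{2} e^{\alpha^k(C_6 - (2-\alpha)\Re X)} \leq 1,
\end{equation*}
which, since $\Re X > \sigma$, is guaranteed once $\sigma$ is chosen large enough that $(2-\alpha)\sigma$ exceeds $C_6 + \alpha^{-1}\log\tfrac{1+C_2}{2}$. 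The main (really the only) obstacle is this bookkeeping: the estimate spends the factor $(2-\alpha)$ of available decay in $\ft$ to absorb the constant error $C_6$ coming from Lemma \ref{fbehav} and the Lipschitz loss $(1+C_2)/2$, which is exactly why the constraint $\alpha<2$ is imposed at the start of the section. No further ingredients are needed.
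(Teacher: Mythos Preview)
Your proof is correct and follows essentially the same approach as the paper: you derive the same recursive inequality $|F_{k+1}(X)-F_k(X)|\le\frac{1+C_2}{2}|F_k(\ft(X))-F_{k-1}(\ft(X))|$ from the expansion \eqref{blemma3eq3} and the Lipschitz bound of Lemma~\ref{blemma1}, then close the induction using Lemma~\ref{fbehav} and the same largeness condition on $\sigma$. The only cosmetic difference is that the paper anchors its induction at $k=0$ (via Lemma~\ref{blemma2} directly) whereas you start at $k=1$ by feeding Lemma~\ref{blemma2} through the recursive inequality once; both are fine.
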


\begin{proof}
Recalling that $F_0(X)=X$,
the lemma holds for $k=0$ by Lemma \ref{blemma2}. We proceed by induction, and assume that for some $k\geq 1$, we have
\begin{equation*}
\left \av F_{k}(X) - F_{k-1}(X) \right \av < C_9 e^{-\alpha^{k-1} \Re X},
\end{equation*}
noting that this is equivalent to
\begin{equation}
\label{blemma4eq1}
\left \av T_{k}(X) - T_{k-1}(X) \right \av < C_9 e^{-\alpha^{k-1} \Re X}.
\end{equation}
Using \eqref{blemma3eq3} applied to $F_{k+1}$ and $F_k$, we have that
\begin{align*}
F_{k+1}(X) - F_k(X) &= \xi\left (X+ \varphi(X) + \frac{\rho(X+\varphi(X))}{2} + \frac{T_k(\ft(X))}{2} \right )\\
&- \xi\left (X+ \varphi(X) + \frac{\rho(X+\varphi(X))}{2} + \frac{T_{k-1}(\ft(X))}{2} \right )\\
&+ \frac{T_k(\ft(X))}{2} - \frac{T_{k-1}(\ft(X))}{2}.
\end{align*}
Using Lemma \ref{blemma1} applied to $\xi$ with
\[ Y = \frac{T_k(\ft(X))}{2} - \frac{T_{k-1}(\ft(X))}{2},\]
we see that
\begin{equation}
\label{blemma4eq2}
\av F_{k+1}(X) - F_{k}(X) \av \leq \left \av (1+C_2) \left (
\frac{T_k(\ft(X))}{2} - \frac{T_{k-1}(\ft(X) )}{2} \right ) \right \av.
\end{equation}
The inductive hypothesis and Lemma \ref{fbehav} imply that
\begin{align*}
| T_{k}(\ft(X)) - T_{k-1}(\ft(X)) | &< C_9e^{-\alpha^{k-1}\Re \ft(X)} \\
&< C_9e^{C_6\alpha^{k-1}}e^{-2\alpha^{k-1} \Re X} \\
&< C_9e^{\alpha^{k-1}(C_6 - (2-\alpha) \sigma )}e^{-\alpha^k \Re X},
\end{align*}
for $X \in L$. Hence if $\sigma $ is chosen large enough that
$e^{\alpha^{k-1}(C_6 - (2-\alpha) \sigma )} < 2(1+C_2)^{-1}$ for $k\geq 1$, then we obtain from \eqref{blemma4eq2} that
\[ \av F_{k+1}(X) - F_{k}(X) \av  < C_9 e^{-\alpha^k \Re X},\]
which proves the lemma.
\end{proof}

\subsection{Complex dilatation of $F_k$}

In this section, we will estimate the growth of the complex dilatation of $F_k$ for large $\Re X$.
We will use the following formula for the complex derivatives of a
composition repeatedly, see for example \cite{FM}.

\begin{lemma}
\label{complexderivs}
The complex derivatives of compositions are
\[ (g \circ f)_z = (g_z \circ f)f_z + (g_{\zbar} \circ f) \overline{f_{\zbar}},\]
and
\[ (g \circ f)_{\zbar} = (g_z \circ f)f_{\zbar} + (g_{\zbar} \circ f)\overline{f_z}.\]
\end{lemma}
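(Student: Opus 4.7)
The plan is to treat this as a direct computation from the real chain rule, reinterpreted in Wirtinger variables. Write $z=x+iy$, $f = u + iv$, so that the Wirtinger derivatives are $\partial_z = \tfrac12(\partial_x - i\partial_y)$ and $\partial_{\zbar} = \tfrac12(\partial_x + i\partial_y)$, and note the conjugation identities $\overline{f_z} = (\bar f)_{\zbar}$ and $\overline{f_{\zbar}} = (\bar f)_z$, which follow immediately from taking complex conjugates inside those definitions.

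Next, regard $g$ as a smooth function of the two independent variables $w,\bar w$, with $w = f(z)$. The real multivariable chain rule then gives
\[
(g\circ f)_x = (g_w\circ f)\,f_x + (g_{\wbar}\circ f)\,\overline{f_x}, \qquad (g\circ f)_y = (g_w\circ f)\,f_y + (g_{\wbar}\circ f)\,\overline{f_y},
\]
using $\bar w_x = \overline{f_x}$ and $\bar w_y = \overline{f_y}$.

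Now I would form $(g\circ f)_z = \tfrac12\bigl((g\circ f)_x - i(g\circ f)_y\bigr)$ and collect terms. The piece multiplied by $g_w\circ f$ becomes $\tfrac12(f_x - if_y) = f_z$, and the piece multiplied by $g_{\wbar}\circ f$ becomes $\tfrac12(\overline{f_x} - i\overline{f_y}) = \overline{\tfrac12(f_x + if_y)} = \overline{f_{\zbar}}$, yielding the first identity. An analogous computation of $(g\circ f)_{\zbar} = \tfrac12\bigl((g\circ f)_x + i(g\circ f)_y\bigr)$ yields the factor $f_{\zbar}$ on the first term and $\overline{f_z}$ on the second, giving the second identity.

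There is no real obstacle here; the only place to be careful is the bookkeeping of the conjugates, which comes down to the elementary fact that $\partial_z$ and complex conjugation of $f$ swap in the expected way. Since the lemma is stated purely for subsequent use in computing complex dilatations of the compositions defining $F_k$, the proof is simply this two-line application of the real chain rule together with the Wirtinger conventions.
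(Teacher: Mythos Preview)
Your proof is correct and is exactly the standard derivation of the Wirtinger chain rule; the paper itself does not prove this lemma at all but simply states it with a reference to \cite{FM}. One minor LaTeX point: the macro \verb|\wbar| is not defined in the paper, so if you incorporate this argument you should write $\bar w$ explicitly.
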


As a first application of this, we consider the complex derivatives of $\rho(X + \varphi(X))$.

\begin{lemma}
\label{rhocomplexderivs}
Let $\rho_1(X) = \rho(X+\varphi(X))$. Then there exists a constant $C_{10}>0$ such that
\[ | (\rho_1)_X(X) | \leq C_{10}e^{-2X}\text{ and } | (\rho_1)_{\xbar}(X)| \leq C_{10}e^{-2X},\]
for $X \in L$.
\end{lemma}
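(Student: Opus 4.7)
The plan is to treat this as a direct application of the chain rule for complex derivatives (Lemma \ref{complexderivs}), exploiting the fact that $\rho$ is holomorphic. From Lemma \ref{glogtrans} we have the explicit formula $\rho(X) = \log(1+ce^{-2X})$, which is a function of the single complex variable $X$. In particular $\rho_{\xbar} \equiv 0$, so when we apply Lemma \ref{complexderivs} to the composition $\rho_1(X) = \rho(X+\varphi(X))$, half of the terms vanish and we are left with
\[ (\rho_1)_X(X) = \rho'(X+\varphi(X))\bigl(1+\varphi_X(X)\bigr), \qquad (\rho_1)_{\xbar}(X) = \rho'(X+\varphi(X))\,\varphi_{\xbar}(X). \]

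Next I would control each factor. Differentiating the explicit formula gives $\rho'(X) = -2ce^{-2X}/(1+ce^{-2X})$, so for $\Re X$ large enough we obtain a bound of the form $|\rho'(X)| \leq C e^{-2\Re X}$. To evaluate this at $X+\varphi(X)$, I would use Lemma \ref{blemma0}, which yields $|\varphi(X)| \leq C_1$ and hence $\Re(X+\varphi(X)) \geq \Re X - C_1$; this produces $|\rho'(X+\varphi(X))| \leq C e^{2C_1} e^{-2\Re X}$. The remaining factors $|1+\varphi_X(X)|$ and $|\varphi_{\xbar}(X)|$ are uniformly bounded on $L$ by Lemma \ref{blemma1a}.

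Multiplying these bounds together gives the desired estimate $|(\rho_1)_X(X)|, |(\rho_1)_{\xbar}(X)| \leq C_{10} e^{-2\Re X}$ for a suitable constant $C_{10}$ depending on $K,\theta,c$ (the exponential decay in the paper's statement should be understood as $e^{-2\Re X}$).

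There is no real obstacle here; the only point to notice is the holomorphicity of $\rho$, which collapses the chain rule to a single term in each case and ensures that the bound comes entirely from the exponential decay of $\rho'$ combined with the uniform control on $\varphi$ and its partials established earlier.
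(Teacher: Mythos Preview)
Your proposal is correct and follows essentially the same approach as the paper: both use the chain rule from Lemma \ref{complexderivs}, exploit the holomorphicity of $\rho$ to kill the $\rho_{\xbar}$ terms, compute $\rho'$ explicitly, and then combine the exponential decay of $\rho'(X+\varphi(X))$ (via the bound on $|\varphi|$ from Lemma \ref{blemma0}) with the uniform bounds on the partials of $\varphi$ from Lemma \ref{blemma1a}. Your observation that the bound should read $e^{-2\Re X}$ rather than $e^{-2X}$ is also apt.
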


\begin{proof}
Recall from Lemma \ref{glogtrans} that $\rho(X) = \log (1+ce^{-2X})$.
Since $\rho$ is analytic, it follows that $\rho_{\xbar} \equiv 0$, and also
\[ \rho_X(X) = \frac{-2ce^{-2X}}{1+ce^{-2X}}.\]
Then using Lemma \ref{complexderivs} and Lemma \ref{blemma1a}, we have
\begin{align*}
| (\rho_1)_X(X)| &\leq | \rho_X(X+\varphi(X)) \cdot (1+\varphi_X(X)) + \rho_{\xbar}(X+\varphi(X)) \cdot \overline{ \varphi _{\xbar}(X)} |\\
&\leq (1+C_4) | \rho_X(X+\varphi(X)) |,
\end{align*}
which gives the desired conclusion for $(\rho_1)_X$, since $|\varphi|$ is bounded above by Lemma \ref{blemma0}. Similar calculations give the growth for $(\rho_1)_{\xbar}$.
\end{proof}

We now want to estimate the complex dilatations
$\mu_k$ of $F_k$.

\begin{proposition}
\label{blemma8}
There exist constants $C_{11},C_{12}>0$ such that for all $k \geq 1$,
\[ \av (F_k)_X(X) \av \geq 1 - C_{11}e^{-\alpha\Re X}\]
and
\[ \av (F_k)_{\xbar}(X) \av \leq C_{12} e^{-\alpha \Re X} \]
for all $X \in L$.
\end{proposition}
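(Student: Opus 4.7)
I would prove this by induction on $k$, controlling both $|(F_k)_X - 1|$ and $|(F_k)_{\xbar}|$ simultaneously, since these imply the conclusions via the reverse triangle inequality. For the base case $k=1$, the explicit formula \eqref{f1def} lets us differentiate directly using Lemma \ref{complexderivs}. Combining Lemma \ref{blemma1a} (bounds and Lipschitz estimates for the partials of $\varphi$ and $\xi$), Lemma \ref{rhocomplexderivs} (which gives $|(\rho_1)_X|, |(\rho_1)_{\xbar}| = O(e^{-2\Re X})$), and, crucially, the two identities of Corollary \ref{blemma0cor}, the leading-order parts cancel: the contribution that survives comes only from the $\rho_1/2$ correction, and this decays like $e^{-2\Re X}$, which is stronger than $e^{-\alpha \Re X}$.

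For the inductive step I would write
\[ F_{k+1}(X) = A(X) + \xi(A(X)), \qquad A(X) = X + \varphi(X) + \eta(X), \qquad \eta(X) := \tfrac{1}{2}\rho_1(X) + \tfrac{1}{2}T_k(\ft(X)). \]
Applying Lemma \ref{complexderivs} gives
\[ (F_{k+1})_X = (1+\varphi_X+\eta_X) + \xi_X(A)(1+\varphi_X+\eta_X) + \xi_{\xbar}(A)\overline{(\varphi_{\xbar}+\eta_{\xbar})}, \]
with the analogous formula for $(F_{k+1})_{\xbar}$. Using Lemma \ref{blemma1a} to write $\xi_X(A) = \xi_X(X+\varphi) + O(|\eta|)$ and similarly for $\xi_{\xbar}(A)$, then invoking Corollary \ref{blemma0cor} to annihilate the piece
\[ 1 + \varphi_X + \xi_X(X+\varphi)(1+\varphi_X) + \xi_{\xbar}(X+\varphi)\overline{\varphi_{\xbar}} = 1, \]
reduces the estimate to showing that $|\eta|$, $|\eta_X|$ and $|\eta_{\xbar}|$ are all $O(e^{-\alpha \Re X})$.

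The estimate on $|\eta|$ follows from Lemma \ref{blemma3} and \eqref{rdecay}. For $\eta_X$ and $\eta_{\xbar}$, the contributions from $\rho_1$ are $O(e^{-2\Re X})$ by Lemma \ref{rhocomplexderivs}. For the $T_k \circ \ft$ term, the chain rule gives
\[ (T_k \circ \ft)_X = (T_k)_X(\ft)\,\ft_X + (T_k)_{\xbar}(\ft)\,\overline{\ft_{\xbar}}, \]
and by the inductive hypothesis together with Lemma \ref{fbehav}, $|(T_k)_X(\ft(X))|$ and $|(T_k)_{\xbar}(\ft(X))|$ are both bounded by $C_{11}e^{\alpha C_6} e^{-2\alpha \Re X}$, while $\ft_X$ and $\ft_{\xbar}$ are uniformly bounded thanks to Lemma \ref{blemma1a} and Lemma \ref{rhocomplexderivs}. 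The analogous bound holds for $(T_k \circ \ft)_{\xbar}$. Since $\alpha > 1$ and $-2\alpha < -\alpha$, these contributions are strictly better than $e^{-\alpha \Re X}$, and after extracting a factor of $e^{-(2-\alpha)\sigma}$ or $e^{-\alpha \sigma}$ (which can be made arbitrarily small by enlarging $\sigma$), the induction closes with a single pair of constants $C_{11}, C_{12}$ independent of $k$.

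The main obstacle will be bookkeeping: there are several nested compositions, and one must verify that every ``error'' term produced by replacing $\xi_X(A)$ by $\xi_X(X+\varphi)$, and by moving derivatives through the chain rule, decays at least like $e^{-\alpha \Re X}$ with a coefficient that can be absorbed into $C_{11}$ and $C_{12}$. This is the same budget argument that powered Lemmas \ref{blemma3} and \ref{blemma4}: the factor of $2$ gained from $\Re \ft(X) \geq 2\Re X - C_6$ dominates the factor $\alpha < 2$ lost at each iteration, leaving room to absorb all the accumulating constants provided $\sigma$ is chosen sufficiently large.
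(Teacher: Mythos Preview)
Your proposal is correct and follows essentially the same route as the paper: write $F_{k+1}=P+\xi(P)$ with $P=X+\varphi+\eta$, differentiate via the chain rule, use the identities of Corollary~\ref{blemma0cor} to kill the leading part, and then show that all surviving terms are controlled by $|\eta|$, $|\eta_X|$, $|\eta_{\xbar}|$, which decay like $e^{-2\Re X}$ by Lemma~\ref{rhocomplexderivs} and the inductive hypothesis combined with Lemma~\ref{fbehav}. The paper organizes the bookkeeping into labelled pieces $I_1,I_2,I_3$ and $J_1,J_2,J_3$, and packages your chain-rule estimate for $(T_k\circ\ft)_X$ as a separate Lemma~\ref{blemma8a}, but the substance is the same; if anything, your explicit mention that $\ft_X,\ft_{\xbar}$ are uniformly bounded makes the chain-rule step slightly more transparent than the paper's version.
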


The proof of this proposition will proceed by induction. Since $F_0(X)=X$,
it is clear that the proposition holds for $k=0$. Hence assume the result is true for $k$. Recalling that $F_k(X) = X+T_k(X)$, this means that
\begin{equation}
\label{blemma8eq10}
\av (T_k)_X(X) \av \leq C_{11}e^{-\alpha\Re X}, \:\:\:\:\:
\av (T_k)_{\xbar}(X) \av \leq C_{12}e^{-\alpha\Re X}.
\end{equation}

\begin{lemma}
\label{blemma8a}
There exists constants $C_{13},C_{14}>0$ such that
\[ \left | \left [ T_k(\ft(X)) \right ] _X \right | < C_{13}e^{-2 \Re X}\]
and
\[ \left | \left [ T_k(\ft(X)) \right ] _{\xbar} \right | < C_{14}e^{-2 \Re X},\]
for $X \in L$.
\end{lemma}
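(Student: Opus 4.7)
The plan is to apply the chain rule from Lemma \ref{complexderivs} to the composition $T_k \circ \ft$, then bound each factor using the inductive hypothesis \eqref{blemma8eq10} for the outer function and direct computation for the inner function.

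First I would write
\[ [T_k(\ft(X))]_X = (T_k)_X(\ft(X)) \cdot \ft_X(X) + (T_k)_{\xbar}(\ft(X)) \cdot \overline{\ft_{\xbar}(X)}, \]
and similarly for the $\xbar$-derivative. By the inductive hypothesis \eqref{blemma8eq10} applied at the point $\ft(X)$, both $|(T_k)_X(\ft(X))|$ and $|(T_k)_{\xbar}(\ft(X))|$ are bounded above by constants times $e^{-\alpha \Re \ft(X)}$. Using Lemma \ref{fbehav}, we have $\Re \ft(X) > 2\Re X - C_6$, so
\[ e^{-\alpha \Re \ft(X)} < e^{\alpha C_6} e^{-2\alpha \Re X}. \]

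Next I would control the derivatives of $\ft$. From \eqref{flogtrans}, $\ft(X) = 2X + 2\varphi(X) + \rho(X+\varphi(X))$, so
\[ \ft_X(X) = 2 + 2\varphi_X(X) + (\rho_1)_X(X), \qquad \ft_{\xbar}(X) = 2\varphi_{\xbar}(X) + (\rho_1)_{\xbar}(X), \]
where $\rho_1(X)=\rho(X+\varphi(X))$. The partial derivatives of $\varphi$ are uniformly bounded by Lemma \ref{blemma1a}, and Lemma \ref{rhocomplexderivs} shows $(\rho_1)_X$ and $(\rho_1)_{\xbar}$ decay like $e^{-2\Re X}$. Therefore $|\ft_X|$ and $|\ft_{\xbar}|$ are uniformly bounded above on $L$ by some constant depending only on $K,\theta,c$.

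Combining these two pieces gives
\[ |[T_k(\ft(X))]_X| \leq C e^{-2\alpha \Re X}, \]
and similarly for the $\xbar$-derivative. Since $\alpha > 1$, we have $2\alpha > 2$, so for $\Re X > \sigma \geq 0$,
\[ e^{-2\alpha \Re X} = e^{-2\Re X} e^{-(2\alpha-2)\Re X} \leq e^{-(2\alpha-2)\sigma} e^{-2\Re X}, \]
which absorbs into the constants $C_{13}, C_{14}$. No step is genuinely hard here; the only thing to watch is that the constants produced depend on $k$ only through the inductive constants $C_{11}, C_{12}$, which are fixed once the statement of Proposition \ref{blemma8} is being proved by induction. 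In other words, the same $C_{13}, C_{14}$ work uniformly in $k$ because they only depend on $C_{11}, C_{12}, C_4, C_{10}, C_6, \alpha$ and $\sigma$.
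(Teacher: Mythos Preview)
Your proof is correct and follows the same approach as the paper. In fact, your version is more complete: the paper's proof only displays the estimate $|(T_k)_X(\ft(X))| < C_{11}e^{C_6\alpha + 2(1-\alpha)\sigma}e^{-2\Re X}$ obtained from the inductive hypothesis and Lemma~\ref{fbehav}, and then asserts that this ``gives the result'' without explicitly writing out the chain rule or the uniform bounds on $\ft_X$, $\ft_{\xbar}$ that you supply via Lemma~\ref{blemma1a} and Lemma~\ref{rhocomplexderivs}.
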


\begin{proof}
By the inductive hypothesis \eqref{blemma8eq10}, we have
\[ \av (T_k)_X(\ft(X)) \av \leq C_{11}e^{-\alpha\Re \ft(X)}.\]
Recalling the growth of $\ft$ from Lemma \ref{fbehav}, this gives
\begin{align*}
\av (T_k)_X(\ft(X)) \av &< C_{11} e^{\alpha C_6} e^{-2\alpha \Re X}\\
&< C_{11} e^{C_6 \alpha + 2(1-\alpha)\sigma}e^{-2\Re X},
\end{align*}
for $X \in L$, which gives the result for $\left [ T_k(\ft(X)) \right ] _X$. The result for $\left [ T_k(\ft(X)) \right ] _{\xbar} $ follows analogously.
\end{proof}

Recalling the definition of $F_{k+1}$ from \eqref{fk+1}, we have
\[ F_{k+1}(X) = \frac{ F_k(\ft(X))}{2} + \xi \left ( \frac{ F_k(\ft(X))}{2} \right ).\]
For convenience let us write
\begin{equation}
\label{peq}
P(X) = \frac{ F_k(\ft(X))}{2}= X + \varphi(X) + \frac{\rho(X+\varphi(X))}{2} + \frac{ T_k(\ft(X))}{2},
\end{equation}
so that
\begin{equation*}
F_{k+1}(X) = P(X) + \xi(P(X)).
\end{equation*}
The complex derivatives of $P$ are
\begin{equation}
\label{pcd1}
P_X(X) = 1+ \varphi_X(X) + \left [ \frac{\rho(X+\varphi(X))}{2} \right ] _X + \left [ \frac{ T_k(\ft(X))}{2} \right ]_{X},
\end{equation}
and
\begin{equation}
\label{pcd2}
P_{\xbar}(X) = \varphi_{\xbar}(X) + \left [ \frac{\rho(X+\varphi(X))}{2}  \right ] _{\xbar} + \left [ \frac{ T_k(\ft(X))}{2} \right ]_{\xbar}.
\end{equation}

We are now in a position to Prove Proposition \ref{blemma8}.

\begin{proof}[Proof of Proposition \ref{blemma8}]
The complex derivative of $F_{k+1}$ with respect to $X$ is
\begin{equation*}
(F_{k+1})_X(X) = P_X(X) + P_X(X) \xi_X(P(X)) + \overline{ P_{\xbar}(X)} \xi_{\xbar}(P(X)).
\end{equation*}
Using the identity from Corollary \ref{blemma0cor}, we can write
\begin{align*}
(F_{k+1})_X(X) - 1 &=
\left ( P_X(X) - 1 - \varphi_X(X) \right) \\
&+ \left ( P_X(X) \xi_X(P(X)) - (1+\varphi_X(X)) \xi_X(X+\varphi(X)) \right) \\
&+ \left ( \overline{ P_{\xbar}(X)} \xi_{\xbar} (P(X)) - \overline{\varphi_{\xbar}(X) } \xi_{\xbar}(X+\varphi(X)) \right )\\
&= I_1 + I_2 + I_3.
\end{align*}
For $I_1$, by \eqref{pcd1} we have
\begin{align*}
|I_1|= \left | P_X(X) - 1 - \varphi_X(X) \right | &= \left | \left [ \frac{\rho(X+\varphi(X))}{2} \right ] _X + \left [ \frac{ T_k(\ft(X))}{2} \right ]_{X} \right | \\
&< \frac{C_{10}}{2} e^{-2\Re X} + \frac{C_{13}}{2} e^{-2\Re X} \\
&= \frac{(C_{10}+C_{13})}{2} e^{-2\Re X}
\end{align*}
by Lemmas \ref{rhocomplexderivs} and \ref{blemma8a}.

For $I_2$, first observe that by \eqref{rdecay} and Lemma \ref{blemma3}, we may assume that $\sigma$ is large enough that $\av P(X) - X - \varphi(X)\av < \delta$ for $X \in L$,
and so Lemma \ref{blemma1a} shows that
\begin{equation*}
\av \xi_X (P(X)) - \xi_X (X+\varphi(X)) \av <
C_3 \av P(X) - (X+\varphi(X)) \av,
\end{equation*}
for $X \in L$. By the definition of $P$, \eqref{rdecay} and the proof of Lemma \ref{blemma3}, this implies that there exists $C_{15}>0$ such that
\begin{align}
\label{blemma8eq2}
\av \xi_X (P(X)) - \xi_X (X+\varphi(X)) \av &<C_3\left( \frac{C_5}{2}\: e^{-2\Re X} + \frac{C_8}{2}\: e^{-\alpha \Re \ft (X)} \right )\\
\notag &< C_{15}e^{-2\Re X},
\end{align}
for $X \in L$.
Next, by \eqref{pcd1}, Lemma \ref{blemma1a} and the calculation for $I_1$, we have
\begin{equation}
\label{blemma8eq3}
|P_X(X)| < 1 +C_4 + \left ( \frac{C_{10}+C_{13}}{2} \right ) e^{-2\Re X} < C_{16},
\end{equation}
for $X \in L$.
Then \eqref{blemma8eq2}, \eqref{blemma8eq3}, Lemma \ref{blemma1a} for $|\xi_X|$ and the calculation for $I_1$ give
\begin{align*}
\av I_2 \av &= \av P_X(X) \xi_X(P(X)) - (1+\varphi_X(X)) \xi_X(X+\varphi(X)) \av  \\
&= \av P_X(X) [ \xi_X(P(X)) - \xi_X ( X+\varphi (X)  ) ]+ \xi_X(X+\varphi(X) ) [ P_X(X) -(1+\varphi_X(X) ) ] \av \\
&< C_{16}C_{15}e^{-2 \Re X} +\frac{C_4(C_{10}+C_{13})}{2} e^{-2\Re X}
\end{align*}

For $I_3$, observe first that since we may assume $\sigma$ is large enough that $\av P(X) - X - \varphi(X)\av < \delta$ for $X \in L$, Lemma \ref{blemma1a} implies that
\begin{equation*}
\av \xi_{\xbar} (P(X)) - \xi_{\xbar} (X+\varphi(X)) \av <
C_3 \av P(X) - (X+\varphi(X)) \av.
\end{equation*}
As in the calculation for $I_2$,  this implies that there exists $C_{17}>0$ such that
\begin{equation}
\label{blemma8eq6}
\av \xi_{\xbar} (P(X)) - \xi_{\xbar} (X+\varphi(X)) \av < C_{17}e^{-2\Re X},
\end{equation}
for $X \in L$.
Also observe that by \eqref{pcd2}, Lemma \ref{blemma1a} and the calculation for $I_1$ that there exists $C_{18}>0$ such that
\begin{equation}
\label{blemma8eq6a}
\av P_{\xbar}(X) \av <C_4 + \left ( \frac{C_{10}+C_{13}}{2} \right) e^{-2\Re X}<C_{18},
\end{equation}
for $X \in L$. Further, \eqref{pcd2} and calculations analogous to those for $I_1$ show that there exists $C_{19}>0$ such that
\begin{equation}
\label{blemma8eq6b}
| P_{\xbar}(X) - \varphi_{\xbar}(X) |<C_{19}e^{-2\Re X}.
\end{equation}
Then \eqref{blemma8eq6}, \eqref{blemma8eq6a}, \eqref{blemma8eq6b} and Lemma \ref{blemma1a} for $|\xi_{\xbar}|$ give
\begin{align*}
\av I_3 \av &= \av \overline{P_{\xbar}(X)} \xi_{\xbar}(P(X)) - \overline{\varphi_{\xbar}(X)} \xi_{\xbar}(X+\varphi(X)) \av  \\
&= \av \overline{P_{\xbar}(X)} [ \xi_{\xbar}(P(X)) - \xi_{\xbar} ( X+\varphi (X)  ) ]+ \xi_{\xbar}(X+\varphi(X) ) [ \overline{P_{\xbar}(X)} -\overline{\varphi_{\xbar}(X)}  ] \av \\
&< C_{18}C_{17}e^{-2\Re X} + C_4C_{19}e^{-2\Re X},
\end{align*}
for $X \in L$.
The estimates for $I_1,I_2,I_3$ show that there exists $C_8'>0$ such that
\begin{equation*}
\av (F_{k+1})_X(X) - 1 \av < C_{8}'e^{-2 \Re X},
\end{equation*}
for $X \in L$ and hence if $\sigma$ is chosen large enough so that $C_8'e^{(\alpha-2)\sigma} <C_8$, then
\begin{equation*}
\av (F_{k+1})_X(X) - 1 \av < C_{8}e^{-\alpha \Re X},
\end{equation*}
Therefore
\begin{equation*}
\av (F_{k+1})_X(X)  \av >1-  C_{8}e^{-\alpha \Re X}
\end{equation*}
for $X \in L$ as required.

We next move on to estimate $\av(F_{k+1})_{\xbar}(X)\av$. The calculations are very similar to those above, but are included for the reader's convenience. From the definition of $F_{k+1}$ and Lemma \ref{complexderivs}, we have
\begin{equation*}
(F_{k+1})_{\xbar}(X) = P_{\xbar}(X) + \xi_{X}(P(X))P_{\xbar}(X) + \xi_{\xbar}(P(X)) \overline{P_X(X)}.
\end{equation*}
Using the second identity from Corollary \ref{blemma0cor}, we can write this as
\begin{align*}
(F_{k+1})_{\xbar}(X) &=
\left ( P_{\xbar}(X) - \varphi_{\xbar}(X) \right) \\
&+ \left ( P_{\xbar}(X) \xi_X(P(X)) - \varphi_{\xbar}(X) \xi_X(X+\varphi(X)) \right) \\
&+ \left ( \overline{ P_{X}(X)} \xi_{\xbar} (P(X)) - \overline{1+\varphi_{X}(X) } \xi_{\xbar}(X+\varphi(X)) \right )\\
&= J_1 + J_2 + J_3.
\end{align*}
By \eqref{blemma8eq6b}, we have
\begin{align*}
\av J_1 \av  &= \av P_{\xbar}(X)  - \varphi_{\xbar}(X)\av \\
&< C_{19}e^{-2\Re X},
\end{align*}
for $X \in L$.
Taking advantage of estimates already calculated, by \eqref{blemma8eq2}, \eqref{blemma8eq6a}, \eqref{blemma8eq6b} and Lemma \ref{blemma1a},
\begin{align*}
\av J_2 \av &= \av \left ( P_{\xbar}(X) \xi_X(P(X)) - \varphi_{\xbar}(X) \xi_X(X+\varphi(X)) \right) \av  \\
&= \av P_{\xbar}(X) [ \xi_X(P(X)) - \xi_X ( X+\varphi (X)  ) ]+ \xi_X(X+\varphi(X) ) [ P_{\xbar}(X) -\varphi_{\xbar}(X)  ] \av \\
&< C_{18}C_{15}e^{-2\Re X}+ C_4C_{19}e^{-2\Re X},
\end{align*}
for $X \in L$. Also, by \eqref{blemma8eq3}, \eqref{blemma8eq6}, the calculation for $I_1$ and Lemma \ref{blemma1a}, we have
\begin{align*}
\av J_3 \av &= \av \left ( \overline{ P_{X}(X)} \xi_{\xbar} (P(X)) - \overline{(1+\varphi_{X}(X)) } \xi_{\xbar}(X+\varphi(X)) \right ) \av  \\
&= \av \overline{P_{X}(X)} [ \xi_{\xbar}(P(X)) - \xi_{\xbar} ( X+\varphi (X)  ) ]+ \xi_{\xbar}(X+\varphi(X) ) [ \overline{P_{X}(X)} -\overline{(1+\varphi_{X}(X)}  ] \av \\
&< C_{16}C_{17}e^{-2\Re X} + \left ( \frac{C_4(C_{10}+C_{13})}{2} \right ) e^{-2\Re X},
\end{align*}
for $X \in L$.
The estimates for $J_1,J_2$ and $J_3$ show that
\[ \av (F_{k+1})_{\xbar}(X) \av < C_9' e^{-2 \Re X}\]
for $X \in L$. Hence if $\sigma$ is chosen large enough so that $C_9'e^{(\alpha-2)\sigma} <C_9$, then
\[ \av (F_{k+1})_{\xbar}(X) \av < C_9 e^{-\alpha \Re X},\]
for $X \in L$.
This completes the proof of the proposition.
\end{proof}

\begin{corollary}
\label{blemma9}
There exists a constant $C_{20}>0$ such that the complex dilatation
$\mu_k$ of $F_k$ satisfies, for all $k \geq 1$,
\[ \av \mu _k(X) \av \leq C_{20}e^{-\alpha\Re X}\]
for all $X \in L$.
\end{corollary}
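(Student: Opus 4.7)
The plan is direct: combine the two estimates of Proposition \ref{blemma8} via the definition $\mu_k = (F_k)_{\xbar}/(F_k)_X$. First I would write
\[
|\mu_k(X)| = \frac{|(F_k)_{\xbar}(X)|}{|(F_k)_X(X)|} \leq \frac{C_{12} e^{-\alpha \Re X}}{1 - C_{11} e^{-\alpha \Re X}},
\]
which is valid as soon as the denominator on the right-hand side is positive.

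Next I would arrange that $\sigma$ (the defining constant of the half-plane $L$) is enlarged, if necessary, so that $C_{11} e^{-\alpha \sigma} \leq 1/2$; this is harmless because all of the earlier lemmas in \S 5 already required $\sigma$ to be taken large enough for finitely many such inequalities. With this choice, $1 - C_{11} e^{-\alpha \Re X} \geq 1/2$ for every $X \in L$, and hence
\[
|\mu_k(X)| \leq 2 C_{12} e^{-\alpha \Re X}
\]
for every $k \geq 1$ and every $X \in L$. Setting $C_{20} = 2 C_{12}$ yields the stated bound.

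There is essentially no obstacle here; the corollary is a bookkeeping step that packages Proposition \ref{blemma8} into the form that will be needed in the next subsection, where asymptotic conformality of $\psi$ will be deduced from the fact that $|\mu_k(X)| \to 0$ as $\Re X \to \infty$ uniformly in $k$. The only mild subtlety is ensuring the enlargement of $\sigma$ is compatible with the previous constraints, which it is since all previous constraints are of the same form (finitely many inequalities of the type $e^{-c \sigma} < \text{const}$).
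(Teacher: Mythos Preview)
Your proof is correct and matches the paper's approach exactly: the paper simply states that this is an immediate corollary of Proposition~\ref{blemma8}, and your argument spells out precisely the routine computation (bounding $|\mu_k| = |(F_k)_{\xbar}|/|(F_k)_X|$ and absorbing the denominator into the constant by enlarging $\sigma$) that the word ``immediate'' is hiding.
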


\begin{proof}
This is an immediate corollary of Proposition \ref{blemma8}.
\end{proof}

\subsection{Proof of Proposition \ref{bprop1}}

Choose $\sigma >0$ large enough so that the results of the previous sections hold in the half-plane $L=\{ \Re X > \sigma \}$.
Recall the definition of the functions $\psi_k$ and assume that they are defined in a neighbourhood of infinity
$U=\{ \av z \av > R\}$ where $R > e^{\sigma}$. Recall that under a logarithmic change of variable, we have
$\widetilde{\psi_k} = F_k$.

Write
\[ \psi _k(z) = \prod _{j=1}^k \frac{\psi_j(z)}{\psi_{j-1}(z)},\]
where $\psi_0(z) \equiv 1$. To show that $\psi_k$ converges uniformly on $U$, it is enough to show that $\log \psi_k(z)$
converges uniformly on $U$, where the principal branch of the logarithm is chosen. Then, writing $z=e^X$,
Lemma \ref{blemma4} implies that
\begin{align*}
\av \log \psi_k(z) \av &= \left \av \sum_{j=1}^k ( \log \psi_j(z) - \log \psi_{j-1}(z) ) \right \av \\
&= \left \av \sum _{j=1}^k F_j(X) - F_{j-1}(X) \right \av \\
&\leq \sum_{j=1}^k \left \av F_j(X) - F_{j-1}(X) \right \av \\
&< C_9 \sum_{j=1}^k \exp \{ -\alpha ^j \Re (X) \} \\
&= C_9 \sum_{j=1}^k \av z \av ^{-\alpha ^j},
\end{align*}
for some constant $C_9>0$ and $\alpha \in (1,2)$.
As $k \to \infty$, this clearly converges on $U = \{\av z \av > R\}$. Hence $\psi_k$ converges uniformly on $U$ to $\psi$,
and we may write $\psi (z) = z+ R(z)$.

For the second part of the proposition, we need to show that $R(z) = o(1)$.
We know that $T_k$ converges uniformly to $T$ for $\Re X >\sigma $ (this is just the content
of the first part of the proof). By this fact and by Lemma \ref{blemma3}, we have
\[ \av T(X) \av <C_8 e^{-\alpha \Re X}, \]
for $\Re X > \sigma$. Now, $\widetilde{\psi}(X) = X+T(X)$ and so, using the fact that $z= e^X$,
we have that
\begin{align*}
\av R(z) \av &= \left \av \exp \left ( \log z + T(\log z) \right ) - z \right \av \\
&= \left \av z \left ( \exp T(\log z) -1 \right ) \right \av \\
&\leq \av z \av \left (  \av T(\log z) \av + o( \av T(\log z ) \av) \right ) \\
&\leq \av z \av \left ( C_8 e^{-\alpha \log \av z \av } + o(\av T(\log z )\av) \right ) \\
&= C_8 \av z \av ^{1-\alpha} + o( \av z \av ^{1-\alpha}).
\end{align*}
Since $\alpha \in (1,2)$, we have that $R(z) = o(1)$ for large $\av z\av$. In fact, although the constants $C_j$ may change,
we actually have that $R(z) = O(\av z \av ^{1-\alpha})$ for any $\alpha \in (1,2)$.

\subsection{Proof of Proposition \ref{bprop2}}

As indicated in the construction of $\psi_k$ in the introductory section, each $\psi_k$ is injective on some neighbourhood $U$ of infinity.
Further, Corollary \ref{blemma9} shows that the complex dilatation $\mu_k$
of $\widetilde{\psi_k}$, which is $\psi_k$ in logarithmic coordinates,
satisfies
\begin{equation}
\label{prop2eq1}
\av \mu _k(X) \av \leq C_{20}e^{-\alpha\Re X},
\end{equation}
for $\alpha \in (1,2)$ and all $X \in L$.
Since $\widetilde{\psi_k}(X) = \log \psi (e^X)$, where $z= e^X$,
and $\log, \exp$ are both holomorphic, it follows that
\[ \av \mu_k(X) \av = \av \mu _{\psi_k}(z) \av.\]
Since $\Re X > \sigma$ corresponds to $\av z \av > e^{\sigma}$,
it follows that $\psi_k$ is quasiconformal in a neighbourhood of infinity.
Moreover, \eqref{prop2eq1} shows that $\mu _{\psi_k}(z) \to 0$
as $\av z \av \to \infty$, which means that $\psi_k$
is asymptotically conformal.

By Proposition \ref{bprop1}, $\psi_k$ converges uniformly on $U$ to
a function $\psi$. Since we may assume each $\psi_k$ is $K$-quasiconformal on $U$ for some $K>1$, by the quasiregular Montel's theorem (see \cite{Miniowitz})
it follows that the limit $\psi$ is also $K$-quasiconformal,
and moreover, that $\psi$ is asymptotically conformal.

\section{Proof of Theorem \ref{thm2}}

Recall that $H=h^2$.
Assume that $K,\theta$ are fixed and the quasiconformal map $\psi$ conjugates $f=H+c$ to $H$ in a neighbourhood $U$ of infinity.
Without loss of generality, we can assume that $U=-U$ where $-U = \{ z \in \C : -z \in U\}$.
To prove the theorem, we need to show that the domain of definition of $\psi$ may be extended. To this end, we prove the following lemma, the proof of which contains standard ideas.

\begin{lemma}
\label{s5l1}
Let $V\subset I(f)$ be a connected neighbourhood of infinity with connected complement, satisfying $V=-V$ and such that $f:f^{-1}(V) \to V$ is a two-to-one covering map.
If $\psi$ is defined on $V$, then $\psi$ can be extended to a quasiconformal map defined on $f^{-1}(V)$ which conjugates $f$ to $h^2$.
\end{lemma}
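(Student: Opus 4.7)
The plan is to build the extension $\tilde\psi$ on $f^{-1}(V)$ by lifting $\psi\circ f$ through the two-to-one covering $h^2$, exploiting the fact that both $f$ and $h^2$ are degree-two near infinity. I would first record the topological setup. Since $h(0)=0$ and $h$ is affine, $h(-z)=-h(z)$, so $f(z)=h(z)^2+c$ satisfies $f(-z)=f(z)$; combined with $V=-V$ this gives $f^{-1}(V)=-f^{-1}(V)$. Since $V$ is a connected neighborhood of infinity with connected complement and $f$ is polynomial-like of degree two near infinity, $f^{-1}(V)$ is also a connected neighborhood of infinity with connected complement. The hypothesis that $f:f^{-1}(V)\to V$ is an \emph{unbranched} cover forces the critical value $c$ of $f$ to lie outside $V$. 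I would also verify $0\notin\psi(V)$: if $\psi(z_0)=0$ for some $z_0\in V$, then $h^2\circ\psi=\psi\circ f$ gives $\psi(f(z_0))=h^2(0)=0=\psi(z_0)$, hence $f(z_0)=z_0$, contradicting $z_0\in I(f)$. Consequently $h^2:W\to\psi(V)$ is also an unbranched two-to-one cover, where $W:=(h^2)^{-1}(\psi(V))$.

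Next, consider $\Phi:=\psi\circ f:f^{-1}(V)\to\psi(V)$, itself an unbranched two-to-one cover. Each of $f^{-1}(V)$, $W$, and $\psi(V)$ is a connected neighborhood of infinity with connected complement, so each has fundamental group $\Z$ generated by a large loop about infinity. The induced maps $(h^2)_*$ and $\Phi_*$ both act on $\Z$ as multiplication by $\pm 2$, and in particular $\operatorname{im}\Phi_*=2\Z=\operatorname{im}(h^2)_*$. The standard lifting criterion therefore produces a continuous $\tilde\psi:f^{-1}(V)\to W$ with $h^2\circ\tilde\psi=\Phi$. There are exactly two such lifts, differing by the deck transformation $w\mapsto -w$. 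To pin down the correct one I would use the overlap $V\cap f^{-1}(V)$, a nonempty open set containing a neighborhood of infinity, on which $\psi$ is already defined and satisfies $h^2\circ\psi=\Phi$: thus $\psi$ is itself a local lift, and by uniqueness the global lift $\tilde\psi$ agrees with $\psi$ on this overlap. In particular $\tilde\psi$ genuinely extends $\psi$ and satisfies $h^2\circ\tilde\psi=\psi\circ f$ on $f^{-1}(V)$.

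It remains to verify that $\tilde\psi$ is quasiconformal. Locally $\tilde\psi$ has the form $s\circ\psi\circ f$, where $s$ is a single-valued branch of $(h^2)^{-1}$, and $s$, $\psi$, $f$ are each quasiregular local homeomorphisms with bounded distortion on the relevant domains; composition gives local quasiconformality. Global injectivity is the last point: if $\tilde\psi(z_1)=\tilde\psi(z_2)$ then $\psi(f(z_1))=\psi(f(z_2))$ and injectivity of $\psi$ give $f(z_1)=f(z_2)$, forcing $z_2=\pm z_1$; in the nontrivial case $z_2=-z_1$, the asymptotic $\psi(z)=z+o(1)$ near infinity together with the evenness of $f$ and $h^2$ forces $\tilde\psi(-z)=-\tilde\psi(z)$ on all of $f^{-1}(V)$ by continuity, so $\tilde\psi(z_1)=\tilde\psi(-z_1)=-\tilde\psi(z_1)$, giving $\tilde\psi(z_1)=0\notin W$, a contradiction.

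The main obstacle I anticipate is the topological bookkeeping of the second paragraph: ensuring $\pi_1$ of each of the three relevant domains is really $\Z$ and that the monodromy computations line up. The connected-complement hypothesis on $V$ is doing the heavy lifting here---without it, $V$ could carry extra loops that prevent $\Phi_*$ from landing in $\operatorname{im}(h^2)_*$ and the extension would fail. Once this is clear, the conjugation and local quasiconformality follow formally from the covering-space construction.
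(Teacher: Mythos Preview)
Your proposal is correct and follows essentially the same strategy as the paper: lift $\psi\circ f$ through the two-to-one cover $h^2$, select the branch agreeing with $\psi$ near infinity, and prove injectivity via the odd symmetry $\tilde\psi(-z)=-\tilde\psi(z)$; the only difference is that you invoke the abstract lifting criterion on $\pi_1\cong\Z$ where the paper lifts paths explicitly. You are in fact more careful than the paper in checking that $h^2$ is unbranched over $\psi(V)$, though your fixed-point argument for $0\notin\psi(V)$ tacitly needs $f(z_0)\in V$; the cleanest fix is to observe that in the inductive application $\psi(V)\subset I(H)$ while $0$ is a fixed point of $H$.
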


\begin{remark}
If $V=-V$, then since $h(-z)=-h(z)$ and $g(z)=g(-z)$, it is clear that $f^{-1}(V) = -f^{-1}(V)$.
\end{remark}

\begin{proof}
Let $V$ satisfy the hypotheses of the lemma. Let $w \in V$ and $\gamma$ be a curve connecting $w$ to infinity in $V$.
Since $f$ is a two-to-one covering map from $f^{-1}(V)$ onto $V$, then given $z \in f^{-1}(w)$, $\gamma$ lifts to a curve $\gamma'$
connecting $z$ and infinity in $f^{-1}(V)$. We note that since $V \cup \{\infty \}$ is simply connected and $f:f^{-1}(V) \to V$ is a covering map, $f^{-1}(V) \cup \{ \infty \}$ is also simply connected.

Now, $\eta = \psi(\gamma)$ is a curve in $\psi(V)$ connecting $\psi(w)$ and infinity in $\psi(V) \subset I(H)$. Since
$H:H^{-1}(\psi(V)) \to \psi(V)$ is a two-to-one covering, $\eta$ lifts to two curves in $H^{-1}(\psi(V))$, each terminating at one of the
two points of $H^{-1}(\psi(w))$. Since $\psi$ is defined in a neighbourhood of infinity, there is only one of these two curves, say $\eta'$,
which is the image of $\gamma'$ under $\psi$ near infinity. We then define $\psi(z)$ to be the end-point of $\eta'$. Note that the other lift of $\eta$
corresponds to the other pre-image of $w$ under $f$.

In this way, $\psi$ extends to a map $f^{-1}(V) \to H^{-1}(\psi(V))$, with $\psi(z) \in H^{-1}(\psi(f(z)))$.
Since $f$ is continuous, $\psi$ is continuous on $V$ and $H$ is a local homeomorphism away from $0$, the extension of $\psi$ is continuous.
By construction, $\psi$ still satisfies the
conjugacy $H \circ \psi = \psi \circ f$ on its enlarged domain and hence is still locally quasiconformal. To finish the proof of the lemma,
we have to show that $\psi$ is injective.

Suppose this was not the case, and $\psi(z_1) = \psi(z_2)$ for $z_1,z_2 \in f^{-1}(V)$ (and at least one of $z_1,z_2$ must be in $f^{-1}(V) \setminus V$ since $\psi$ is injective in $V$). Then
\[ \psi(f(z_1)) = H(\psi(z_1)) = H(\psi(z_2)) = \psi(f(z_2)),\]
and since $f(z_1),f(z_2) \in V$ and $\psi$ is injective there, we must have $f(z_1) = f(z_2)$. Thus $z_1=-z_2$ and $\psi(z_1) = \psi(-z_1)$.
Since $V=-V$, we obtain a contradiction: choose curves $\pm \gamma$ from $\pm z_1$ to infinity, and then by continuity we have
$\psi(-z) = -\psi(z)$ on $\gamma$.
\end{proof}

To prove part (i) of Theorem \ref{thm2}, observe that if $c \notin I(f)$, then $f:f^{-n}(U) \to f^{1-n}(U)$ is a two-to-one covering map
for any $n \in \N$. Applying Lemma \ref{s5l1} repeatedly to $f^{-n}(U)$ for $n \in \N$ and noting that
\[ I(f) = \bigcup _{n \geq 1} f^{-n}(U)\]
shows that $\psi $ can be extended to all of $I(f)$. The extension of $\psi$ to $f^{-n}(U)$ is a quasiconformal map, but the distortion
may increase as $n$ increases. Hence we can only conclude that $\psi:I(f) \to I(h^2)$ is an injective locally quasiconformal map.

For part (ii) of Theorem \ref{thm2}, the same reasoning applies as in part (i), but here we can only apply Lemma \ref{s5l1} finitely many times,
since $c \in I(f)$.
That is, once $c \in f^{-n}(U)$, then $f:f^{-(n+1)}(U) \to f^{-n}(U)$ is no longer a two-to-one covering map and we cannot apply Lemma \ref{s5l1}.
However, $\psi$ can be extended to a neighbourhood of infinity which contains $c$, which completes the proof of the theorem.

\section{Proof of Theorem \ref{thm3a}}

\subsection{Fixed rays of $h^2$}

Denote by $R_{\varphi}$ the ray $\{ te^{i\varphi} : t\geq 0\}$.
Let the ray $R_{\phi}$ with angle $\phi = \phi(K,\theta)$ be a fixed ray of $H$, recalling section 2 or Theorem 6.4 of \cite{FG}.

Let $\mu_n$ be the complex dilatation of $H^n$. Then the formula for the complex dilatation of a composition (see for example \cite{FM}) gives
\[ \mu_n (z) = \frac{ \mu_{1}(z) + r_H \mu_{n-1}(H(z)) }{1+r_H \overline{\mu_1(z)}\mu_{n-1}(H(z)) },\]
where $r_H = \overline{H_z(z)}/H_z(z)$.
Now, $\mu_1$ is constant in $\C$, and the next lemma shows that $\mu_n$ is a constant on the fixed ray $R_{\phi}$.

\begin{lemma}
\label{s6l2}
Let $z \in R_{\phi}$. Then for $n \geq 1$
\begin{equation*}
\mu_n(z) \equiv \frac{\mu_1 + e^{-i\phi}\mu_{n-1} }{1+e^{-i\phi}\overline{\mu_1}\mu_{n-1}}.
\end{equation*}
\end{lemma}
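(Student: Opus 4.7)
The plan is to induct on $n$ using the composition formula for complex dilatations displayed just above the lemma. Two structural facts about $H = h^2$ drive the induction: that $\mu_1$ is constant on all of $\C$, and that the coefficient $r_H(z)$ takes the constant value $e^{-i\phi}$ along the fixed ray $R_\phi$.

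For the first fact, writing $H = g \circ h$ with $g(w) = w^2$ holomorphic and $h$ the affine map of \eqref{eq3.4a}, the chain rule gives $H_z = 2h(z)\,h_z$ and $H_{\zbar} = 2h(z)\,h_{\zbar}$, hence $\mu_1(z) = \mu_H(z) = h_{\zbar}/h_z = e^{2i\theta}(K-1)/(K+1)$ by \eqref{eq3.3}, a constant. For the second, since $h_z = (K+1)/2 > 0$ is real and positive, the same formulas collapse to
$$r_H(z) = \frac{\overline{H_z(z)}}{H_z(z)} = \frac{\overline{h(z)}}{h(z)} = e^{-2i\arg h(z)}.$$
For $z \in R_\phi$, invariance of $R_\phi$ under $H$ forces $h(z)^2 \in R_\phi$, so $2\arg h(z) \equiv \phi \pmod{2\pi}$ and therefore $r_H(z) = e^{-i\phi}$, independent of $z$. (One could also extract $\arg h(z) = \theta + \tan^{-1}(\tan(\phi-\theta)/K)$ from \eqref{hpolar} and verify this directly.)

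With these two facts in hand, the induction is routine. The base case $n = 1$, read with the convention $\mu_0 \equiv 0$, reduces to $\mu_1(z) = \mu_1$. For the inductive step, $H$-invariance of $R_\phi$ ensures $H(z) \in R_\phi$ whenever $z \in R_\phi$, so by the inductive hypothesis $\mu_{n-1}(H(z))$ equals the constant value $\mu_{n-1}$ claimed on the ray. Substituting $\mu_1(z) = \mu_1$, $r_H(z) = e^{-i\phi}$, and $\mu_{n-1}(H(z)) = \mu_{n-1}$ into the composition formula yields precisely the stated identity. The only step requiring any thought is the identification $r_H|_{R_\phi} = e^{-i\phi}$; the rest is mechanical substitution.
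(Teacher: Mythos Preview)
Your proof is correct and follows essentially the same approach as the paper: both compute $r_H(z)=e^{-i\phi}$ on the fixed ray via $H_z = 2h_z\,h(z)$ and the fact that $h(z)^2 \in R_\phi$, then feed this together with the constancy of $\mu_1$ into the composition formula by induction. Your write-up is in fact a bit more explicit about the induction structure and the base case than the paper's own proof.
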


\begin{proof}
To find $r_H$, we observe that
\[ H_z(z) = \left [ h(z)^2 \right ]_z = 2(h_z(z))h(z) = (K+1)h(z).\]
Since $z \in R_{\phi}$, we have $z=re^{i\phi}$ for some $r>0$.
By the fact that $R_{\phi}$ is a fixed ray of $H$, it follows that
$h(z) = r'e^{i\phi/2}$ for some $r'>0$. Therefore
\[ r_H(z) = e^{-i\phi}\]
for $z \in R_{\phi}$.
Since $\mu_1 \equiv e^{2i\theta}(K-1)/(K+1)$, by induction we see that
$\mu_n$ is a constant on $R_{\phi}$ and takes the claimed form by the formula for the complex dilatation of a composition.
\end{proof}

We will also need the following lemma.

\begin{lemma}
\label{s6l3}
Any fixed ray $R_{\phi}$ of $H$ lies in the half plane
\[ \mathbb{H}_\theta=\{R_\varphi \;|\; -\pi/2 < \varphi - \theta < \pi/2\}, \]
or if $\theta=\pi/2$ then $R_0$ is the only fixed ray.
\end{lemma}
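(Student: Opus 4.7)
My plan is to study the continuous action of $H$ on the circle $S^1$ of ray directions $\varphi\pmod{2\pi}$ and locate its fixed points directly. From~\eqref{hpolar}, for $\varphi-\theta\in(-\pi/2,\pi/2)$ the affine map $h$ sends the ray $R_\varphi$ to $R_{\tilde\psi(\varphi)}$, where $\tilde\psi(\varphi)=\theta+\tan^{-1}(\tan(\varphi-\theta)/K)$ is smooth and strictly increasing with $\tilde\psi(\varphi)-\theta\in(-\pi/2,\pi/2)$ and continuous extension $\tilde\psi(\theta\pm\pi/2)=\theta\pm\pi/2$; for $\varphi-\theta\in(\pi/2,3\pi/2)$ the linearity identity $h(-z)=-h(z)$ forces $h(R_\varphi)=R_{\tilde\psi(\varphi-\pi)+\pi}$. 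Since $H(z)=h(z)^2$ doubles arguments, a continuous strictly increasing lift $\tilde\Psi:\R\to\R$ of the induced map on $S^1$ is given by $\tilde\Psi(\varphi)=2\tilde\psi(\varphi)$ on $[\theta-\pi/2,\theta+\pi/2]$ and $\tilde\Psi(\varphi)=2\tilde\psi(\varphi-\pi)+2\pi$ on $[\theta+\pi/2,\theta+3\pi/2]$, extended by $\tilde\Psi(\varphi+2\pi)=\tilde\Psi(\varphi)+4\pi$, reflecting that $H$ has topological degree two. A ray $R_\phi$ is fixed by $H$ exactly when $F(\phi):=\tilde\Psi(\phi)-\phi$ lies in $2\pi\Z$.

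The key estimate is that, since $K>1$, the map $\tilde\psi_0(\beta):=\tan^{-1}(\tan\beta/K)$ has the same sign as $\beta$ and strictly smaller absolute value on $(-\pi/2,\pi/2)\setminus\{0\}$, which yields the strict inequality $|2\tilde\psi_0(\beta)-\beta|<|\beta|$ on the same set (with equality at $\beta=0$). Writing $\beta=\varphi-\theta$ in the first branch and $\beta'=\varphi-\pi-\theta$ in the second, direct calculation gives
\[
F(\varphi)=\theta+2\tilde\psi_0(\beta)-\beta \quad\text{and}\quad F(\varphi)=\pi+\theta+2\tilde\psi_0(\beta')-\beta',
\]
respectively, so $|F(\varphi)-\theta|\le|\beta|\le\pi/2$ on the first branch and $F(\varphi)\in(\theta+\pi/2,\theta+3\pi/2)$ on the open interior of the second branch.

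For $\theta\in(-\pi/2,\pi/2)$ the second-branch range lies strictly inside $(0,2\pi)$, ruling out $F\in 2\pi\Z$ there; at the boundary rays $\varphi=\theta\pm\pi/2$ one has $F=\theta\pm\pi/2$, which is never in $2\pi\Z$ for this range of $\theta$. The first-branch bound then forces $F\in[\theta-\pi/2,\theta+\pi/2]\subset(-\pi,\pi)$, so any fixed ray must satisfy $F(\varphi)=0$, and the intermediate value theorem applied to $F$ on $[\theta-\pi/2,\theta+\pi/2]$ with endpoint values $\theta\mp\pi/2$ of opposite signs produces at least one, necessarily in the open interior $\mathbb{H}_\theta$. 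For $\theta=\pi/2$ the first-branch estimate sharpens to $|F(\varphi)-\pi/2|<|\beta|<\pi/2$ on the open interior, so $F\in(0,\pi)$ and no fixed ray lies in the open first half; the boundary values $F(\theta-\pi/2)=F(0)=0$ and $F(\theta+\pi/2)=F(\pi)=\pi$, combined with the exclusion of the second branch, leave $R_0$ as the unique fixed ray. The main care-point rather than obstacle is verifying continuity and strict monotonicity of the piecewise lift $\tilde\Psi$ at the gluing angles $\varphi=\theta\pm\pi/2$, which is immediate from $\tilde\psi_0(\pm\pi/2)=\pm\pi/2$ and the linear degree-two periodicity.
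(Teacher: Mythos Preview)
Your argument is correct and takes a genuinely different route from the paper's. The paper proceeds by a sector--by--sector case analysis: for each sign of $\theta$ it splits the complement of $\mathbb{H}_\theta$ into two wedges $Q_+$ and $Q_-$, shows one of them is disjoint from its $H$--image, and rules out the other by a monotone--angle argument (the angle strictly decreases, resp.\ increases, under $H$ on that wedge). Existence of a fixed ray is not proved here but imported from \cite{FG}.

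Your approach instead treats the induced action of $H$ on ray directions as a single continuous degree--two circle map, lifts it to $\tilde\Psi:\R\to\R$, and controls $F=\tilde\Psi-\mathrm{id}$ uniformly via the single inequality $|2\tilde\psi_0(\beta)-\beta|<|\beta|$ on $(-\pi/2,\pi/2)\setminus\{0\}$. This buys you a cleaner proof with no separate treatment of $\theta>0$, $\theta<0$, $\theta=0$; existence of a fixed ray in $\mathbb{H}_\theta$ drops out of the intermediate value theorem rather than being quoted; and the special case $\theta=\pi/2$ is handled by the same estimate, the fixed ray $R_0$ appearing precisely as the boundary point where the strict inequality degenerates to equality. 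One minor slip: you write the endpoint values of $F$ on $[\theta-\pi/2,\theta+\pi/2]$ as $\theta\mp\pi/2$, but in fact $F(\theta\pm\pi/2)=\theta\pm\pi/2$; this is harmless since the two values still have opposite signs for $\theta\in(-\pi/2,\pi/2)$.
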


\begin{proof}
Recall that our normalization for $\theta$ requires $\theta \in (-\pi/2,\pi/2]$.

Let $\pi/2>\theta>0$. First we consider the segment of rays $Q_+$ satisfying,
\[ Q_+ = \{R_\varphi \;|\; \pi > \varphi-\theta > \pi/2\}. \]
Consider where $Q_+$ is mapped to under $H$,
\[ H(Q_+)=\{R_\varphi \;|\; 0>\varphi-2\theta>-\pi\}. \]
We notice that $Q_+\cap H(Q_+)=\emptyset$ and so there can be no fixed
ray in the segment $Q_+$. Next we consider the segment of rays $Q_-$
satisfying,
\[ Q_- = \{R_\varphi \;|\; -\pi > \varphi-\theta > -\pi/2\}. \]
For simplicity we will consider rays to have angle between $-2\pi$ and $0$. Now
\[ H(Q_-)=\{R_\varphi \;|\; -\pi> \varphi-2\theta > -2\pi\}. \]
Recalling that $0<\theta<\pi/2$; we have $H(Q_-)\cap Q_- \neq
\emptyset$, so it is possible that there is a fixed ray in $Q_-$.
However notice that $h(Q_-)=Q_-$ and that for $R_\varphi \in Q_-$ if
$R_\psi=h(R_\varphi)$ then $-\pi<\psi<\varphi<0$. Squaring doubles the
angle so if $R_\tau=H(R_\varphi)$ the angles must satisfy;
\[ -2\pi<\tau<\psi<\varphi<0. \]
This holds for all $R_\varphi\in Q_-$ and so there can be no fixed ray in $Q_-$.

If $-\pi/2<\theta<0$ then analogous arguments work to show $Q_-\cap
H(Q_-)=\emptyset$. Further, if we consider rays to have angle $\varphi \in [0,2\pi]$ then if $R_\varphi\in Q_+$ and $R_\tau=H(R_\varphi)$, we have
$0<\varphi<\tau<2\pi$.

Finally, the rays $R_{\theta\pm\pi/2}$ and $R_{\theta +\pi}$ are fixed
when $\theta=\pm\pi/2$ or $\theta=\pi$ respectively. For our normalization, this only leaves
the case $\theta=\pi/2$; from the discussion
earlier $\phi = 0$ is the only fixed ray for any value of $K$. Also if
$\theta=0$ then $Q_\pm\cap H(Q_\pm)=\emptyset$; completing all
possible cases.
\end{proof}

\subsection{M\"{o}bius transformations}

Define
\[ A(z) = \frac{ \mu_1 + e^{-i\phi}z} {1+e^{-\phi}\overline{\mu_1} z}\]
so that $\mu_n = A^n(\mu_1)$ on the fixed ray $R_{\phi}$.
Note that $A$ depends only on $K,\theta$.
We can rewrite $A$ as
\begin{equation}
\label{moba}
A(z) = e^{-i\phi} \left ( \frac{z+e^{i\phi}\mu _1}{1+ \overline{e^{i\phi}\mu_1}z} \right ).
\end{equation}
Now $A$ is a M\"{o}bius map of the disk $\D$, and the behaviour of the iterates is determined by the trace.
By standard theory, see for example \cite{A}, if $\tr (A)^2 \geq 4$,
then $A$ has all of its fixed points on $\partial \D$ and
$\av A^n(z) \av \to 1$ for all $z \in \D$.
In particular, we would have $\av A^n (\mu_1) \av \to 1$ and
so $\av \mu_n \av \to 1$.
Therefore to prove Theorem \ref{thm3a}, we need to prove the following proposition.

\begin{proposition}
\label{s6p1}
Given the M\"{o}bius transformation $A$ as in \eqref{moba}, we have $\tr (A)^2 \geq 4$.
\end{proposition}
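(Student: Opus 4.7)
The plan is to compute $\tr(A)^2$ directly in terms of $\phi$, $K$, $\theta$ and then use the equation defining the fixed ray $R_\phi$ to collapse the target inequality into a perfect square.

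First, I would express $A$ as a M\"{o}bius matrix using \eqref{moba}: the matrix
\[ M = \begin{pmatrix} e^{-i\phi} & \mu_1 \\ \overline{e^{i\phi}\mu_1} & 1 \end{pmatrix} \]
has $\det M = e^{-i\phi}(1 - |\mu_1|^2)$. Normalizing to unit determinant (by dividing $M$ through by $e^{-i\phi/2}\sqrt{1-|\mu_1|^2}$) produces a matrix of trace $2\cos(\phi/2)/\sqrt{1-|\mu_1|^2}$, so that
\[ \tr(A)^2 = \frac{4\cos^2(\phi/2)}{1 - |\mu_1|^2}. \]
Since $|\mu_1| = (K-1)/(K+1)$, the inequality $\tr(A)^2 \geq 4$ is equivalent to
\[ \sin^2(\phi/2) \leq \left(\frac{K-1}{K+1}\right)^2. \]

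Next I would extract the fixed ray equation. Combining \eqref{hpolar} with $H = h^2$ gives $\arg H(re^{i\phi}) = 2\theta + 2\tan^{-1}(\tan(\phi-\theta)/K)$; requiring that this equals $\phi \pmod{2\pi}$ and using that $\tan$ has period $\pi$ yields
\[ K \tan(\phi/2 - \theta) = \tan(\phi - \theta). \]
Writing $\gamma = \phi/2$ and $b = \tan(\gamma - \theta)$, the tangent addition formula applied to $\tan(2\gamma - \theta) = \tan(\gamma + (\gamma - \theta))$ solves for $\tan\gamma$ as a rational function of $b$:
\[ \tan\gamma = \frac{(K-1)b}{1 + Kb^2}. \]

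Finally, I would substitute this into $\sin^2\gamma = \tan^2\gamma/(1+\tan^2\gamma)$, clear denominators, and use $(K+1)^2 - (K-1)^2 = 4K$; the desired inequality $(K+1)^2\sin^2\gamma \leq (K-1)^2$ then collapses to
\[ (1 - Kb^2)^2 \geq 0, \]
which is trivially true. The main obstacle is really just to foresee that the algebra reduces to this manifestly nonnegative form; once the perfect-square factorization is identified the argument is mechanical. Degenerate situations (for instance $b = \infty$ when $\phi - \theta = \pm\pi/2$) can be handled by continuity, and Lemma \ref{s6l3} confirms that under the standing normalization $\phi - \theta \in (-\pi/2, \pi/2)$, so they do not actually arise.
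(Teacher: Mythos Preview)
Your argument is correct and takes a genuinely different route from the paper. Both arrive at the same trace formula (your $4\cos^2(\phi/2)/(1-|\mu_1|^2)$ is exactly the paper's $(K+1)^2(1+\cos\phi)/(2K)$), but from there the paper proceeds indirectly: it bounds $\cos\phi$ from below by maximizing the angular-change function $G(\varphi)=\varphi-\theta-\tan^{-1}(\tan(\varphi-\theta)/K)$ over all rays in $\mathbb{H}_\theta$, locates the critical points $\varphi_\pm$, and then pushes through a page of trigonometric identities to simplify $\cos(2G_\pm)$ into $(-K^2+6K-1)/(K+1)^2$. You instead exploit the \emph{exact} fixed-ray relation $K\tan(\phi/2-\theta)=\tan(\phi-\theta)$, solve for $\tan(\phi/2)$, and reduce the inequality to the manifestly nonnegative $(1-Kb^2)^2\ge 0$. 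This is shorter, avoids the extremal analysis entirely, and as a bonus identifies the equality case $\tr(A)^2=4$ precisely as $\tan^2(\phi/2-\theta)=1/K$. The paper's approach has the minor advantage of yielding an explicit lower bound on $\cos\phi$ valid for \emph{all} fixed rays, independent of $\theta$, but that extra information is not needed for the proposition. Your remarks about the degenerate cases are also right: since $\phi-\theta\in(-\pi/2,\pi/2)$ by Lemma~\ref{s6l3}, the quantity $b=\tan(\phi/2-\theta)=\tan(\phi-\theta)/K$ is finite, and $\tan(\phi/2)$ is finite because $\phi\in(-\pi,\pi)$, so the algebra goes through without appeal to continuity.
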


\subsection{Proof of Proposition \ref{s6p1}}

The rest of this section is devoted to proving the proposition. We first calculate an expression for $\tr(A)^2$.

\begin{lemma}
\label{s6l1}
The trace of $A$ satisfies
\[ \tr (A)^2 = \frac{ (K+1)^2(1+\cos \phi)}{2K}.\]
\end{lemma}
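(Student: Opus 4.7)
The plan is to represent $A$ as a matrix in $SL(2,\C)$ and read off the trace from there. Writing $A$ in the standard form $(az+b)/(cz+d)$ from \eqref{moba} gives
\[ A(z) = \frac{e^{-i\phi}z + \mu_1}{e^{-i\phi}\overline{\mu_1}z + 1}, \]
so the associated matrix is
\[ M = \begin{pmatrix} e^{-i\phi} & \mu_1 \\ e^{-i\phi}\overline{\mu_1} & 1 \end{pmatrix}, \qquad \det M = e^{-i\phi}(1-|\mu_1|^2). \]
Since the trace of a M\"obius transformation is only defined after normalizing the representative to have determinant one, the quantity I want is
\[ \tr(A)^2 = \frac{(e^{-i\phi}+1)^2}{e^{-i\phi}(1-|\mu_1|^2)}. \]

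The next step is to simplify the numerator. Pulling a factor of $e^{-i\phi}$ through,
\[ \frac{(e^{-i\phi}+1)^2}{e^{-i\phi}} = e^{-i\phi} + 2 + e^{i\phi} = 2(1+\cos\phi). \]
For the denominator, from \eqref{eq3.3} I have $|\mu_1| = (K-1)/(K+1)$, so
\[ 1-|\mu_1|^2 = \frac{(K+1)^2-(K-1)^2}{(K+1)^2} = \frac{4K}{(K+1)^2}. \]
Combining these two pieces gives exactly
\[ \tr(A)^2 = \frac{2(1+\cos\phi)(K+1)^2}{4K} = \frac{(K+1)^2(1+\cos\phi)}{2K}, \]
as claimed.

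The argument is essentially a one-shot calculation, so there is no serious obstacle; the only thing to keep track of is the normalization step (dividing by $\sqrt{\det M}$) which is what makes $\tr(A)^2$, rather than the trace of the unnormalized matrix, the invariant that controls the iteration behaviour via the standard dichotomy $\tr^2 \gtreqless 4$ used in Proposition \ref{s6p1}.
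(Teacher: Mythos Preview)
Your proof is correct and follows essentially the same approach as the paper. The only cosmetic difference is that the paper explicitly rescales the matrix for $A$ to have determinant one before reading off $a+d$, whereas you use the equivalent shortcut $\tr(A)^2 = (a+d)^2/(ad-bc)$; the arithmetic is identical.
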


\begin{proof}
To compute the trace of a M\"{o}bius transformation $(az+b)/(cz+d)$, we first need to ensure that $ad-bc=1$, and then calculate $a+d$. Putting $A$ into this normalized form yields
\[ A(z) = \frac{ e^{-i\phi/2}\left( \frac{K+1}{2K^{1/2}} \right ) z
+ \mu_1 e^{i\phi/2}\left( \frac{K+1}{2K^{1/2}} \right ) }{ e^{-i\phi/2}\left( \frac{K+1}{2K^{1/2}} \right )\overline{\mu_1} z + e^{i\phi/2}\left( \frac{K+1}{2K^{1/2}} \right )} .\]
From this we can calculate that
\[ (\tr A)^2 = \frac{(K+1)^2(e^{i\phi/2}+e^{-i\phi/2})^2}{4K} = \frac{(K+1)^2(1+\cos \phi )}{2K},\]
which proves the lemma.
\end{proof}

To prove Proposition \ref{s6p1} by using Lemma \ref{s6l1} we need to obtain a lower bound on $\cos\phi$, where $\phi$ is the angle of a fixed ray of $H$ corresponding to $K,\theta$. Recall from lemma~\ref{s6l3} that $\phi\in\mathbb{H}_\theta$, so we need only consider rays $R_\varphi$ where $\varphi-\theta\in(-\pi/2,\pi/2)$.
To find a lower bound,
first consider the function
$$G(\varphi) =\varphi - \theta - \tan^{-1}\left(\frac{\tan(\varphi-\theta)}{K}\right).$$
Recalling the polar form of $h$ given in \eqref{hpolar},
and since $h$ maps rays to rays, the function $G$ describes the change in angle undergone by a ray of angle $\varphi$ under $h$.
Clearly $G(\theta)=0$ since $h$ stretches in the direction $e^{i\theta}$. Further, for the fixed ray of $h^2$ with angle $\phi$,
$G(\phi) = \phi /2$.

We want to know how large $G$ can be, that is, how much of an angle can $h$ move a ray through.
This maximum occurs when the derivative $\frac{\partial G}{\partial\varphi}=0$. Calculating the derivative gives
\begin{equation*}
\frac{\partial G}{\partial\varphi} =1-\frac{K}{(K^2-1)\cos^2(\varphi - \theta) +1}.
\end{equation*}
Hence the maximum value of $G$ occurs when
\[ \cos^2(\varphi - \theta)= \frac{1}{K+1}. \]
Since $\varphi - \theta \in (-\pi/2, \pi/2)$, then the maxima of $G$ are attained at
\[ \varphi_{\pm} =\theta \pm \cos^{-1}[(K+1)^{-1/2}], \]
and the values of $G$ attained there are
\begin{equation*}
G_{\pm} := G\left( \varphi_{\pm} \right)= \pm \left ( \cos^{-1}[(K+1)^{-1/2}] - \tan^{-1} \left( \frac{ \tan (\cos^{-1}[(K+1)^{-1/2}])}{K} \right ) \right ).
\end{equation*}
Using these local maxima, if $0<\varphi - \theta<\pi /2$, then
\[ 0 \leq G(\varphi) \leq G_+\leq \pi/2,\]
and in particular if the fixed ray of angle $\phi$  satisfies $0<\phi - \theta < \pi/2$ we have
\[ 1\geq\cos \phi  \geq \cos 2G_-\geq 0 \]
recalling that $G(\phi) = \phi/2$.
On the other hand, if $0<\varphi - \theta<-\pi/2$, then
\[ 0 \geq G(\varphi) \geq G_+\geq -\pi/2\]
and in particular if $0<\phi - \theta<-\pi/2$
\[ 1\geq\cos \phi \geq \cos 2G_+\geq 0.\]
In either case, we have
\begin{equation}
\label{s6eq10}
\cos \phi \geq \cos 2\left ( \cos^{-1}[(K+1)^{-1/2}] - \tan^{-1} \left( \frac{ \tan (\cos^{-1}[(K+1)^{-1/2}])}{K} \right ) \right )\geq 0.
\end{equation}

We can simplify this expression by using standard trigonometric formula and the expressions

\begin{align}
\cos(\tan^{-1} x)&=(1+x^2)^{-1/2},  \label{cosarctan}\\
\sin(\tan^{-1} x)&=x(1+x^2)^{-1/2},  \label{sinarctan}\\
\tan(\cos^{-1} x)&=(1-x^2)^{1/2}/x,   \label{tanarccos}\\
\sin(\cos^{-1} x)&=(1-x^2)^{1/2}.    \label{sinarccos}
\end{align}

First, using (\ref{tanarccos}) and the addition formula for $\cos$, the right hand side of \eqref{s6eq10} is

\begin{align*}
&\cos\left[2\cos^{-1}[(K+1)^{-1/2}] - 2\tan^{-1}\left(\left(\frac{(1-\frac{1}{K+1})^{1/2}}{(K-1)^{-1/2}}\right)/K\right)\right] \\
&=\cos\left[2\cos^{-1}[(K+1)^{-1/2}]) - 2\tan^{-1}(K^{-1/2})\right] \\
&=  \cos(2\cos^{-1}[(K+1)^{-1/2}])\cos(2\tan^{-1}(K^{-1/2}))
+ \sin(2\cos^{-1}[(K+1)^{-1/2}])\sin(2\tan^{-1}(K^{-1/2})).
\end{align*}

Using the double angle formula
and (\ref{cosarctan}),(\ref{sinarctan}) and (\ref{sinarccos}), one can calculate that
\begin{align*}
\cos(2\cos^{-1}[(K+1)^{-1/2}]) &= \frac{1-K}{1+K},\\
\cos(2\tan^{-1}(K^{-1/2})) &= \frac{K-1}{K+1} ,\\
\sin(2\cos^{-1}[(K+1)^{-1/2}]) & = \frac{2K^{1/2}}{K+1},\\
\sin(2\tan^{-1}(K^{-1/2})) & = \frac{2K^{1/2}}{K+1}.
\end{align*}
Therefore, the right hand side of \eqref{s6eq10} is equal to
\[ -\frac{(K-1)^2}{(K+1)^2} \; + \; \frac{4K}{(K+1)^2} = \frac{-K^2 + 6K -1}{(K+1)^2}.\]

In conclusion, we have
\begin{equation}\label{estimate}
\cos\phi \geq \frac{-K^2 + 6K -1}{(K+1)^2}.
\end{equation}

From Lemma \ref{s6l1} and (\ref{estimate}) we have that:
\begin{align*}
\tr (A)^2 &\geq \frac{(K+1)^2}{2K} + \frac{(K+1)^2(-K^2 + 6K -1)}{2K(K+1)^2} \\
&= \frac{K^2 + 2K +1 -K^2 +6K -1}{2K} \\
&= \frac{8K}{2K} \\
&= 4,
\end{align*}
which completes the proof of Proposition \ref{s6p1}.


\begin{thebibliography}{widest-label}

\bibitem{A} J.W.Anderson, {\it Hyperbolic geometry}, Springer, 2005.

\bibitem{Bergweiler} W.Bergweiler
Iteration of quasiregular mappings, {\it Comput. Methods Funct. Theory}, {\bf 10}, 455-481, 2010.

\bibitem{BE} W.Bergweiler, A.Eremenko,
Dynamics of a higher dimensional analog of the trigonometric functions, {\it Ann. Acad. Sci. Fenn. Math.}, {\bf 36}, 165-175, 2011.

\bibitem{BFLM} W.Bergweiler, A.Fletcher, J.K.Langley, J.Meyer,
The escaping set of a quasiregular mapping,
{\it Proc. Amer. Math. Soc.}, {\bf 137}, 641-651, 2009.

\bibitem{Bottcher} L.E.B\"{o}ttcher,
The principal laws of convergence of iterates and their application to analysis (Russian), {\it Izv. Kazan. Fiz.-Mat. Obshch.}, {\bf 14}, 155-234, 1904.

\bibitem{BEK} X.Buff, A.Epstein, S.Koch,
B\"{o}ttcher coordinates, arxiv:1104.2981.

\bibitem{CG}
L.Carleson, T.Gamelin,
{\it Complex dynamics},
Springer-Verlag (1993).

\bibitem{FF} A.Fletcher, R.Fryer, in preparation.

\bibitem{FG} A.Fletcher, D.Goodman,
Quasiregular mappings of polynomial type in $\R^2$,
{\it Conform. Geom. Dyn.}, {\bf 14}, 322-336, 2010.

\bibitem{FM} A.Fletcher, V.Markovic, {\it Quasiconformal mappings and Teichm\"{u}ller spaces}, OUP, 2007.

\bibitem{FN} A.Fletcher, D.A.Nicks,
Quasiregular dynamics on the n-sphere,
{\it Erg. Th. and Dyn. Sys.}, {\bf 31}, 23-31, 2011.

\bibitem{Hinkkanen} A.Hinkkanen,
Uniformly quasiregular semigroups in two dimensions,
{\it Ann. Acad. Sci. Fenn.}, {\bf 21}, no.1, 205-222, 1996.

\bibitem{HM} A.Hinkkanen, G.J.Martin,
Attractors in quasiregular semigroups,
{\it XVIth Rolf Nevanlinna Colloquium (Joensuu, 1995)}, 135–141, de Gruyter, Berlin, 1996.

\bibitem{HMM} A.Hinkkanen, G.J.Martin, V.Mayer,
Local dynamics of uniformly quasiregular mappings,
{\it Math. Scand.}, {\bf 95}, no. 1, 80–100, 2004.

\bibitem{IM} T.Iwaniec, G.Martin,
Quasiregular semigroups,
{\it Ann. Acad. Sci. Fenn.},
{\bf 21}, no. 2, 241-254, 1996.

\bibitem{Milnor} J.Milnor,
{\it Dynamics in one complex variable}, Third edition, Annals of
Mathematics Studies, {\bf 160}, Princeton University Press, Princeton, NJ, 2006.

\bibitem{Miniowitz} R.Miniowitz,
Normal families of quasimeromorphic mappings,
{\it Proc. Amer. Math. Soc.}, {\bf 84}, no.1, 35-43, 1982.

\bibitem{Rickman} S.Rickman,
Quasiregular mappings, Ergebnisse der Mathematik und ihrer
Grenzgebiete 26, Springer, 1993.

\bibitem{Sullivan} D.Sullivan,
The ergodic theory at infinity of an arbitrary discrete group of hyperbolic
motions,
{\it Riemann Surfaces and Related Topics, Proceedings of the 1978 Stony
Brook Conference. Ann. of Math. Stud.}, {\bf 97}, 465-496, 1981.

\bibitem{Tukia} P.Tukia,
On two-dimensional quasiconformal groups,
{\it Ann. Acad. Sci. Fenn. Ser. A I Math.}, {\bf 5}, 73-78, 1980.


\end{thebibliography}
\end{document}